\newcommand{\Z}{{\mathbb Z}}
\newcommand{\R}{{\mathbb R}}
\newcommand{\C}{{\mathbb C}}
\newcommand{\M}{{\mathbb M}}
\newcommand{\N}{{\mathbb N}}
\renewcommand{\H}{{\mathbb H}}
\newcommand{\T}{{\mathbb T}}
\newcommand{\Id}{{\mathrm{Id}}}
\newcommand{\Bl}{{\mathcal B}}
\newcommand{\Dl}{{\mathcal D}}
\newcommand{\El}{{\mathcal E}}
\newcommand{\Fl}{{\mathcal F}}
\newcommand{\Hl}{{\mathcal H}}
\newcommand{\Hlb}{{\mathcal Hb}}
\newcommand{\Hlbo}{{\mathcal Hbo}}
\newcommand{\Ol}{{\mathcal O}}
\newcommand{\Pl}{{\mathcal P}}
\newcommand{\Ml}{{\mathcal M}}
\newcommand{\Sl}{{\mathcal S}}
\newcommand{\Tl}{{\mathcal T}}
\newcommand{\Kl}{{\mathcal K}}
\newcommand{\Wl}{{\mathcal W}}
\newcommand{\hc}{\mathfrak{I}}
\newcommand{\hq}{\mathrm{Iq}}
\newcommand{\gop}{\widehat{\Gamma}_0}
\newcommand{\two}{\mathrm{Sym}}
\newcommand{\area}{{\rm area}}
\newcommand{\dist}{{\rm dist}}
\newcommand{\vol}{{\rm vol}}
\newcommand{\covol}{{\rm covol}}
\newcommand{\diag}{{\rm diag}}
\newcommand{\supp}{{\rm supp}}
\newcommand{\grad}{{\rm grad}}
\newcommand{\SLR}{{{\rm SL}_2(\R)}}
\newcommand{\SLZ}{{{\rm SL}_2(\Z)}}
\newcommand{\SLdR}{{{\rm SL}_d(\R)}}
\newcommand{\SLdZ}{{{\rm SL}_d(\Z)}}
\newcommand{\SLdmZ}{{{\rm SL}_{d-1}(\Z)}}
\newcommand{\SLdpmR}{{{\rm SL}_{d-1}^{\pm }(\R)}}
\newcommand{\SO}{{{\rm SO}(2)}}
\newcommand{\SOd}{{{\rm SO}(d)}}
\newcommand{\SOdm}{{{\rm SO}(d-1)}}
\newcommand{\gd}{G_d}
\newcommand{\gdm}{G_{d-1}}
\newcommand{\gad}{\Gamma_d}
\newcommand{\gadm}{\Gamma_{d-1}}
\long\def\comment#1\endcomment{}
\newtheorem{thm}{Theorem}[section]
\newtheorem{cor}[thm]{Corollary}
\newtheorem{lem}[thm]{Lemma}
\newtheorem{prop}[thm]{Proposition}
\theoremstyle{definition}
\newtheorem{example}[thm]{Example}
\newtheorem{remark}[thm]{Remark}
\newtheorem{remarks}[thm]{Remarks}
\newtheorem{notation}[thm]{Notation}
\newtheorem{convention}[thm]{Convention}
\begin{document}

\title[Effective equidistribution of expanding horospheres]{Effective
  equidistribution of expanding horospheres in the locally symmetric space $\mathbf{SO(d) \backslash SL_d(\R) / SL_d(\Z)}$}

\author{C. Dru\c{t}u}
\address{Mathematical Institute, Andrew Wiles Building, Radcliffe Observatory Quarter, Woodstock Road, Oxford OX2 6GG, United Kingdom}
\email{cornelia.drutu@maths.ox.ac.uk}
\urladdr{http://people.maths.ox.ac.uk/drutu/}
\author{N. Peyerimhoff}
\address{Department of Mathematical Sciences, Durham University, Upper Mountjoy Campus, Stockton Road, Durham, DH1 3LE, United Kingdom}
\email{norbert.peyerimhoff@durham.ac.uk}
\urladdr{http://www.maths.dur.ac.uk/~dma0np/}

\date{\today}

\begin{abstract}
We use a dictionary between lattice point counting inside dilated $d$-dimensional ellipsoids (Euclidean counting) and counting of lifts of a closed horosphere that intersect a ball of increasing radius to obtain two types of results. Firstly, \textit{via} an $L^2$-integral error estimate for Euclidean counting, we prove effective equidistribution results for a family of expanding horospheres in the locally symmetric space $SO(d) \backslash SL_d(\R) / SL_d(\Z)$. Secondly, we derive from uniform error estimates in Euclidean counting, error terms for counting $\SLdZ$-orbit points in a certain increasing family of subsets in $SO(d) \backslash SL_d(\R)$ (which we call {\emph{truncated chimneys}}), and for counting the number of lifts of a closed horosphere that intersect a ball with large radius.
\end{abstract}

\maketitle

\section{Introduction}

\subsection{Equidistribution of horospheres}\label{subsec:first} 

The main topic of this paper is the effective equidistribution of expanding periodic horospheres for a particular class of higher rank locally symmetric spaces and geodesic rays inside them. 

There are many equidistribution results for orbits of subgroups of semisimple Lie groups in locally symmetric spaces, in particular the equidistribution of expanding horospheres (and lower dimensional horocyclic sets) is a consequence of Ratner's Theorem. 
Providing an estimate of the equidistribution rate (an ``effective'' result) on the other hand is a challenging open problem with important applications \cite{MargulisPbs}. There are few results estimating equidistribution rates, and even fewer attempting to obtain a sharp estimate of the equidistribution rate. Knowledge of such a rate is an important piece of information about the regularity and rigidity of the dynamics involved. Foundational papers in this direction are the ones of Zagier \cite{Zag} and Sarnak \cite{Sarnak}, investigating equidistribution for closed horocycles in finite area hyperbolic surfaces. More precisely, a finite area non-compact hyperbolic surface $M$ has finitely many cusps, and each cusp is covered by a one-parameter family of closed horocycle curves. Each such family $\Fl$ contains a unique closed horocycle curve $\Fl (y)$ of length $\frac{1}{y}$ for $y > 0$, and $\Fl (y)$, endowed with the probability measure $\nu_y$ that puts uniform mass on it, is asymptotically equidistributed with respect to the hyperbolic area measure as $y \to 0$.   

Sarnak \cite{Sarnak} proved the following effective equidistribution result, using properties of Eisenstein series: there exists a finite sequence of numbers $1>s_1>s_2>\dots > s_k >\frac{1}{2}$ and of elements $\mu_1,\dots, \mu_k$ in the dual of $C^1_c (M)$, the space of continuously differentiable functions of compact support on $M$, such that for every $f\in C^1_c (M)$
$$
\oint_{\Fl (y)} f\, d\nu_y = \oint_M f\, d\, {\mathrm{area}}_M + y^{1-s_1}\mu_1 (f)+ \dots + y^{1-s_k}\mu_k (f) +o(y^{1/2}).
$$ 
The $s_i$'s are the poles of the determinant of a matrix valued function which relates Eisenstein series vectors at $s$ and at $1-s$ {\it{via}} a functional equation.

\begin{notation}
The symbol $\oint$ denotes the normalized integral, that is,
  $\oint_A = \frac{1}{\vol (A)} \int_A$. 
\end{notation}

\medskip     

In the particular case of the modular surface ${\mathbb{H}}^2/SL(2, {\mathbb{Z}})$, Zagier \cite{Zag} proved the result above with remainder $o(y^{1/2})$, and showed that a remainder $O(y^{\frac{3}{4}-\varepsilon})$, for every $\varepsilon >0$, would be equivalent to the Riemann Hypothesis. See \cite{Alberto} for a further discussion of Zagier's results, and \cite{CC10,Estala} for similar statements equivalent to the Riemann Hypothesis.

Broadly speaking, the methods used up to now to obtain effective equidistribution results are the following. The method of Zagier \cite{Zag} and Sarnak \cite{Sarnak}, extended to other settings by Marklof \cite{Marklof:theta}, Cacciatori \& Cardella \cite{CC10}, Estala-Arias \cite{Estala} and Romero \& Verjovsky \cite{RV19}, has been to consider Eisenstein series associated to a measure similar to $\nu_y$, and their analytic extensions. 

Another method has been the use of the thickening technique introduced in Margulis' thesis \cite{MargThesis}. This method, usually applied for proving equidistribution results, was made effective in \cite{KleinM96} and inspired  other  effective equidistribution results in  in \cite{Li15,LM18,DKL16,Shi}. 

Finally, the approach used in the work of Burger \cite{Bur90}, Flaminio--Forni \cite{FF03}, Kontorovich-Oh \cite{KOh11, KOh12}, Str\"{o}mbergsson \cite{Strom13,Strom}, S\"{o}dergren \cite{Sode} and more recently Browning-Vinogradov \cite{BV16} and  Edwards \cite{Edwards17,Edwards17b} has been representation-theoretic.
  
\medskip

The current paper uses neither of the above methods, but instead combines existing estimates for the counting of lattice points in Euclidean spaces (in particular a second moment formula due, in its initial form, to Rogers and Schmidt), with a dictionary between the geometry of Euclidean lattices and the geometry of arithmetic lattices in semisimple groups, and of the corresponding locally symmetric spaces. The connection between equidistribution and counting problems in the setting of lattices in semisimple groups is well established (see for instance the influential papers by  Duke-Rudnick-Sarnak \cite{DRS93} and Eskin-McMullen \cite{EskinMcM}). While homogeneous dynamics has proved to be a powerful tool to obtain results in geometric number theory, in this paper the arguments are in the opposite direction: we use Euclidean lattice point results to derive effective horospherical equidistribution with explicit error estimates that improve the currently known ones. We also derive effective counting results in the symmetric space $\SOd \backslash \SLdR$ that are interesting in their own right.

\subsection{Goal of the paper} We investigate from the effective equidistribution viewpoint a special family of expanding horospheres in the higher rank locally symmetric space $\Pl_d/ \SLdZ$. Here $\Pl_d = \SOd \backslash \SLdR$ is  the symmetric
space of non-compact type composed of positive
definite quadratic forms of determinant one on $\R^d$. 

The expanding horospheres that we consider are those associated to the maximal singular geodesic ray $r(t) = \SOd \cdot a_t$ with
\begin{equation}\label{eq:at}
  a_t = \diag\left( e^{\frac{\lambda t}{2}},\dots,e^{\frac{\lambda t}{2}},e^{-\frac{\mu t}{2}} \right), \mbox{ where }\lambda =  \frac{1}{\sqrt{(d-1)d}},\, \mu=
  \sqrt{\frac{d-1}{d}}.
\end{equation}
Since the space $\Pl_d$ has non-positive sectional curvature, the unit speed ray $r: [0,\infty) \to \Pl_d$ gives rise to a
\emph{Busemann function} $f_r: \Pl_d \to \R$ and the corresponding \emph{horospheres} $\Hl_r(t) = f_r^{-1}(t)$ and \emph{open horoballs} $\Hlbo_r(t) = f_r^{-1}((-\infty ,t)), t\in \R ,$ are its level hypersurfaces and respectively strict sublevel sets (see Section \ref{sect:prelim} for precise definitions). 

\begin{notation}\label{notat:gammadpi}
Henceforth, we use the notation $\gd= \SLdR$ and $\gad = \SLdZ$.
We let  $\pi: \Pl_d \to \Ml_d = \Pl_d/\gad$ denote the canonical projection. For any quadratic form $Q \in \Pl_d$, we denote its image $\pi(Q) = Q \cdot \gad$ by $\bar Q$.
\end{notation}

The projections of horospheres $\pi (\Hl_r(t) )$ are immersed hypersurfaces of
finite volume (with isolated orbifold singularities) which can be
viewed as $(d-1)$-torus bundles over copies of the lower
dimensional locally symmetric space $\Ml_{d-1} = \Pl_{d-1} /
\gadm$. These projected horospheres form a family of
hypersurfaces of $\Ml_d$ that expand as $t$ increases to $+\infty$, and our main
result is an effective equidistribution estimate for these hypersurfaces.

\subsection{Equidistribution results}\label{subsec:2ndmainres}  
Our first main result is the following effective horospherical equidistribution for certain $t$-values in shrinking intervals:

\begin{thm} \label{thm:main1} Let $d \ge 2$ and let $\Hl_r(t), t\geq 0,$ be the family of expanding horospheres associated to the geodesic ray defined in \eqref{eq:at}. Let $f \in C(\Ml_d) \cap L^2(\Ml_d)$.
  
For every $\varepsilon >0$ and $\kappa >0$ there exist
  constants $T_0$ and $C_0$ such that the following holds. Every interval
  $$ I_T = [T,T+C_0e^{-\varepsilon T}] $$
  with $T \ge T_0$, contains a parameter $t \in I_T$ such that
  \begin{equation}\label{eq:in1}
    \left\vert \oint_{\pi(\Hl_r(t))} f\, d\vol_{\pi(\Hl_r(t)) }- \oint_{\Ml_d} f\, d\vol_{\Ml_d} \right\vert 
    <  \kappa\,  e^{ - \frac{\sqrt{(d-1)d}}{4} t + \varepsilon t}. 
  \end{equation}
\end{thm}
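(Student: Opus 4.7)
The plan is to translate the horospherical equidistribution problem for $\pi(\Hl_r(t))$ into a Euclidean lattice-point counting problem, use a Rogers--Schmidt-type second moment estimate to control the $L^2$-in-$t$ deviation from $\oint_{\Ml_d}f\,d\vol$, and extract a single good value of $t$ in each short interval by a Chebyshev averaging argument.

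First, I would set up the dictionary: identify $\Hl_r(t)$ with the orbit $\SOd\cdot a_t\cdot N$, where $N\subset\SLdR$ is the expanding unipotent subgroup (matrices with identity top-left $(d-1)\times(d-1)$ block and arbitrary entries above the diagonal in the last column); under the identification of $\Pl_d$ with unimodular lattices in $\R^d$ up to rotation, $\SOd\cdot a_t n$ corresponds to the lattice $a_t n\,\Z^d$. The projection $\pi(\Hl_r(t))$ is the quotient of $\Hl_r(t)$ by $N\cap\gad\cong\Z^{d-1}$, and a short Haar computation gives $\vol(\pi(\Hl_r(t))) = c\,e^{\sqrt{(d-1)d}\,t/2}$, the exponent being the sum of positive Lyapunov exponents $(d-1)(\lambda+\mu)/2=\sqrt{(d-1)d}/2$ of the flow $a_t$. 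Unfolding $\oint_{\pi(\Hl_r(t))}f\,d\vol$ against the $(N\cap\gad)$-action then converts it into an average, over the $(d-1)$-torus $(N\cap\gad)\backslash N$, of a Siegel-type counting function for the points of $\Z^d$ inside $t$-dilated ellipsoidal regions of $\R^d$.

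Second, applying the Rogers--Schmidt mean and second moment formulas, the mean reproduces $\oint_{\Ml_d}f\,d\vol$, while the variance estimate, with one surviving factor of $1/\vol(\pi(\Hl_r(t)))$ coming from the $\oint$-normalisation, yields
$$
\int_T^{T+1}\left|\oint_{\pi(\Hl_r(t))}f\,d\vol - \oint_{\Ml_d}f\,d\vol\right|^2 dt \;\leq\; C\,\|f\|_{L^2(\Ml_d)}^2\, e^{-\frac{\sqrt{(d-1)d}}{2}T}.
$$
Chebyshev's inequality then produces, on any $I_T = [T, T+C_0 e^{-\varepsilon T}]$, a parameter $t$ whose error is bounded by $\sqrt{C/C_0}\,\|f\|_{L^2}\,e^{-\sqrt{(d-1)d}\,T/4 + \varepsilon T/2}$; choosing $C_0 \ge C\|f\|_{L^2}^2/\kappa^2$ and $T_0$ large enough to absorb the factor $e^{\varepsilon T/2}$ into $e^{\varepsilon T}$ delivers \eqref{eq:in1}.

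The main obstacle is making the dictionary precise enough for Rogers--Schmidt to apply and produce the advertised decay rate. One has to control the contribution of ``boundary'' vectors in the lattice sum (vectors $w\in\Z^d$ with $w_d=0$), handle the discrepancy between the Riemann sum arising from the $(N\cap\gad)$-average and the Lebesgue integral that approximates $\oint_{\Ml_d}f\,d\vol$, and reduce the case of a general $f\in C(\Ml_d)\cap L^2(\Ml_d)$ to a Siegel-transform setting, either by density or by a direct $L^2(\Ml_d)$ variance bound for the horospherical average.
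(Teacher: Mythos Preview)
Your proposal has a genuine structural gap: for $d\ge 3$ the horosphere $\Hl_r(t)$ is \emph{not} the orbit of the abelian unipotent group $N$ you describe. In the Iwasawa coordinates, $\Hl_r(t)$ is identified with $a_{-t}N_dA'$, where $N_d$ is the \emph{full} lower-triangular unipotent group of dimension $d(d-1)/2$ and $A'$ is the rank-$(d-2)$ diagonal group orthogonal to $(a_t)$; its dimension is $\dim\Pl_d-1$, not $d-1$. Correspondingly, $\pi(\Hl_r(t))=\Hl_r(t)/\Gamma_0$ is a $\T^{d-1}$-bundle over $\Ml_{d-1}$, not a single torus. So the ``unfolding against $(N\cap\gad)$'' you propose does not convert the horospherical average into a Siegel-type lattice sum, and the Rogers--Schmidt second moment formula cannot be applied in the way you describe. (Your picture is correct only for $d=2$.)

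The paper's dictionary runs in the opposite direction. One integrates $f(\bar Q)\,N_1(\El_Q,R)$ over $\Ml_d$ and unfolds the sum over $\Gamma/\gop\cong\widehat{\Z}^d$; this turns the $\Ml_d$-integral into an integral over the truncated chimney $\Sl_T=\bigcup_{t\le T}\Fl_0(t)$, hence into $\int_{-\infty}^T e^{\alpha t}F_{\tilde f}(t)\,dt$ with $\alpha=\tfrac12\sqrt{(d-1)d}$ and $T=2\sqrt{d/(d-1)}\log R$. Writing $g(t)=F_{\tilde f}(t)-\oint_{\Ml_d}f$ and applying Cauchy--Schwarz over $\Ml_d$ together with the Rogers--Schmidt bound $\oint_{\Ml_d}|E_1(\El_Q,R)|^2\le C R^d$ yields
\[
\Bigl|\int_{-\infty}^T e^{\alpha t}g(t)\,dt\Bigr|\;\le\;C_d\,\Vert f\Vert_{\Ml_d}\,e^{\beta T},\qquad \beta=\tfrac{\alpha}{2}=\tfrac14\sqrt{(d-1)d}.
\]
This is a bound on a \emph{weighted cumulative} integral of $g$, not an $L^2$-in-$t$ bound. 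The pointwise statement is then extracted by an elementary real-variable argument (Proposition~A.1): if $|g(t)|>\kappa e^{-(\alpha-\beta)t+\varepsilon t}$ on all of $[T,T']$, then $g$ has constant sign there and a direct comparison of $\int_T^{T'}e^{\alpha t}|g(t)|\,dt$ with the bound above forces $T'-T<\tfrac{4C_d\Vert f\Vert}{\kappa}e^{-\varepsilon T}$. No Chebyshev step is involved, and no ``boundary vector'' or Riemann-sum error analysis is needed.
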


\begin{remark}
Note that the intervals $I_T \subset \R$ in the theorem are shrinking as $T \to \infty$, and that there is a pay-off between the exponential decay of these intervals and the equidistribution error bound in \eqref{eq:in1}.
\end{remark}

\begin{remark}
The constants $T_0$
  and $C_0$ in the theorem are explicitly given by
  \begin{eqnarray*}
  T_0 &=& \frac{1}{\varepsilon}\, \log \left( \frac{\sqrt{(d-1)d}+4\varepsilon}{2\kappa}\, C_d\, \Vert f \Vert_{\Ml_d}\right) , \\
  C_0 &=& \frac{4}{\kappa}\, C_d\, \Vert f \Vert_{\Ml_d},\mbox{ with }\Vert f \Vert_{\Ml_d}^2 = \oint_{\Ml_d} f^2\, d\vol_{\Ml_d}\mbox{,  and} 
  \end{eqnarray*}
  \begin{equation} \label{eq:Cd} 
  C_d = 2 \sqrt{\frac{2 \zeta(d)}{d(d-1)\, \omega_d}}, 
  \end{equation}
  where $\omega_d$ is the volume of the $d$-dimensional Euclidean unit  and $\zeta$ is the Riemann zeta function.
\end{remark}


\medskip

In our second equidistribution
result, formulated below, we use the standard notation $\Vert \cdot \Vert_\infty$ for the maximum norm of functions and of vector fields.

\begin{thm} \label{thm:main2}
Let $d \ge 2$ and let $\Hl_r(t)$ be as in Theorem
  \ref{thm:main1}. There exists a constant $T_d$ such that for all $t \ge T_d$ and $f \in C^1(\Ml_d) \cap L^2(\Ml_d)$, we have  
  \begin{equation} \label{eq:inthoro}
    \left\vert \oint_{\pi(\Hl_r(t))} f\, d\vol_{\Hl_r(t) }- \oint_{\Ml_d} f\, d\vol_{\Ml_d} \right\vert 
    \le \left( C_d \Vert f \Vert_{\Ml_d} + 4 \Vert \grad f \Vert_\infty
    \right) e^{ - \frac{\sqrt{(d-1)d}}{8}  t},
  \end{equation}
  where $C_d$ is the constant given in \eqref{eq:Cd} and $\Vert f \Vert_{\Ml_d}^2 = \oint_{\Ml_d} f^2\, d\vol_{\Ml_d}$.
  Moreover, $T_d$ is explicitly given by
  $$ T_d = \frac{8}{\sqrt{(d-1)d}} \log\left( \frac{3}{4}\sqrt{(d-1)d} \right). $$
\end{thm}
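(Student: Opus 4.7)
The plan is to remove the shrinking-interval caveat of Theorem \ref{thm:main1} by combining it with a $\Vert \grad f \Vert_\infty$--Lipschitz estimate for the function $t \mapsto \oint_{\pi(\Hl_r(t))} f\, d\vol$. I set $\varepsilon := \tfrac{1}{8}\sqrt{(d-1)d}$ and $\kappa := C_d \Vert f \Vert_{\Ml_d}$ in Theorem \ref{thm:main1}. Substituting these values into the explicit formulas for $C_0$ and $T_0$ in the Remark following Theorem \ref{thm:main1} yields $C_0 = 4$ and $T_0 = \tfrac{8}{\sqrt{(d-1)d}} \log\bigl(\tfrac{3}{4}\sqrt{(d-1)d}\bigr) = T_d$. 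For any $t \ge T_d$, applying Theorem \ref{thm:main1} on the interval $I_t = [t, t + 4 e^{-\varepsilon t}]$ then produces some $t_0 \in I_t$ with
$$ \Bigl|\oint_{\pi(\Hl_r(t_0))} f\, d\vol - \oint_{\Ml_d} f\, d\vol\Bigr| \;<\; C_d \Vert f \Vert_{\Ml_d}\, e^{-\varepsilon t_0} \;\le\; C_d \Vert f \Vert_{\Ml_d}\, e^{-\varepsilon t}. $$

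The heart of the argument is the Lipschitz estimate
$$ \Bigl| \oint_{\pi(\Hl_r(t+s))} f\, d\vol - \oint_{\pi(\Hl_r(t))} f\, d\vol \Bigr| \;\le\; s \Vert \grad f \Vert_\infty \qquad (s \ge 0). $$
To prove it, I use that each horosphere $\Hl_r(t)$ is an orbit of the unipotent horospherical subgroup $N \subset \gd$ attached to the ray $r$. Right multiplication by $a_s$ carries $\Hl_r(t)$ onto $\Hl_r(t+s)$ and descends to the geodesic-flow map $\phi_s: \pi(\Hl_r(t)) \to \pi(\Hl_r(t+s))$, which moves each point a Riemannian distance exactly $s$ along a unit-speed geodesic. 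Since $a_s$ normalizes $N$ and acts on $\mathrm{Lie}(N)$ by a scalar, the Jacobian of $\phi_s$ with respect to the intrinsic horospherical volume is \emph{constant} in the base point, hence equals the ratio $\vol(\pi(\Hl_r(t+s)))/\vol(\pi(\Hl_r(t)))$ of total volumes. This cancellation in the normalized integral gives
$$ \oint_{\pi(\Hl_r(t+s))} f\, d\vol \;=\; \oint_{\pi(\Hl_r(t))} (f \circ \phi_s)\, d\vol, $$
and because $\phi_s$ displaces each point a distance $s$, the pointwise bound $|f\circ \phi_s - f| \le s \Vert \grad f \Vert_\infty$ yields the claim.

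Combining the Lipschitz estimate with the estimate produced by Theorem \ref{thm:main1} via the triangle inequality, and using $|t_0 - t| \le 4 e^{-\varepsilon t}$, I obtain
$$ \Bigl|\oint_{\pi(\Hl_r(t))} f\, d\vol - \oint_{\Ml_d} f\, d\vol\Bigr| \;\le\; \bigl(C_d \Vert f \Vert_{\Ml_d} + 4 \Vert \grad f \Vert_\infty\bigr)\, e^{-\varepsilon t}, $$
which is \eqref{eq:inthoro} after substituting $\varepsilon = \tfrac{1}{8}\sqrt{(d-1)d}$. The one step that really requires care is the constancy of the Jacobian of $\phi_s$, that is, the fact that the geodesic flow expands every horosphere by the same factor as the volume ratio of its image; this is the structural consequence of $N$ acting transitively on horospheres and $a_s$ scaling $\mathrm{Lie}(N)$, and in Iwasawa coordinates adapted to $r$ it reduces to a one-line determinant computation. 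Everything else — propagating the constants $\varepsilon$ and $\kappa$ through Theorem \ref{thm:main1} and assembling the two error contributions — is routine bookkeeping.
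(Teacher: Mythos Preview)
Your approach is essentially the paper's own: the paper proves the Lipschitz estimate for $t\mapsto F_{\tilde f}(t)$ via the flow $\Phi_t$ (Lemma \ref{lem:horovolrel} supplies the constant Jacobian), feeds Proposition \ref{prop:equihoroball0} into Corollary \ref{cor:intexpconseq}, and reads off the same constants $T_d$, $C_d$, $4$ that you obtain by specializing Theorem \ref{thm:main1}. Since Theorem \ref{thm:main1} is itself Proposition \ref{prop:equihoroball0} plus Proposition \ref{prop:intexpconseq}, and Corollary \ref{cor:intexpconseq} is exactly ``Proposition \ref{prop:intexpconseq} plus Lipschitz plus triangle inequality'', the two arguments are the same computation unpacked in different orders.

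One genuine imprecision to fix: your justification of the constant Jacobian rests on ``each horosphere $\Hl_r(t)$ is an orbit of the unipotent horospherical subgroup $N$'' and ``$a_s$ acts on $\mathrm{Lie}(N)$ by a scalar''. For $d\ge 3$ this is false on both counts. The horosphere has dimension $\tfrac{d(d+1)}{2}-2$, whereas the expanding unipotent $U^+$ has dimension $d-1$; in the Iwasawa picture $\Hl_r(t)=\hc_d(a_{-t}A'N_d)$ is an orbit of the (non-unipotent) group $A'N_d$. Moreover $a_s$ acts trivially on the $N_{d-1}$-block inside $N_d$ and only scales the last-row root spaces, so it does not act by a single scalar on $\mathrm{Lie}(N_d)$. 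The conclusion you need is still true---$a_s$ commutes with $A'$ and acts with constant Jacobian on $N_d$, which is precisely \eqref{eq:volhormod}---but the reason is the Iwasawa determinant computation you allude to at the end, not the rank-one heuristic you lead with. (A minor sign: right multiplication by $a_s$ sends $\Hl_r(t)$ to $\Hl_r(t-s)$, since $(Q\cdot a_s)(e_d)=e^{-\mu s}Q(e_d)$; use $a_{-s}$, or equivalently the paper's $\Phi_s$.)
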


A crucial ingredient in the proof of Theorems \ref{thm:main1} and \ref{thm:main2} is an $L^2$-integral error estimate for primitive lattice points in Borel sets going back to C.~A. Rogers \cite{Rog} and W.~Schmidt \cite{Schm60}.

\medskip

For $d=2$, the exponent in Theorem \ref{thm:main1} is the same as the one obtained by Zagier \cite{Zag} and Sarnak \cite{Sarnak} in the result mentioned in the beginning of Section \ref{subsec:first}, modulo a renormalization of the metric on the hyperbolic plane, as explained in Example \ref{ex:hypplane1}. While the estimate in Theorem \ref{thm:main1again} does not cover all (large enough) values of $t$, as in Zagier and Sarnak's theorem, its interest may arguably come from the fact that its proof is based on a second moment formula  going back to Rogers and Schmidt. See Remark \ref{rem:comparison2} for further details.  

\medskip

A different but related effective equidistribution result in the case $d=3$ has been considered in \cite{LM18}, that is, effective equidistribution \emph{of rational points} on expanding
horospheres.

\medskip

Another related set of results is the one concerning effective equidistribution of orbits of unipotent subgroups (also called orbits of the horocyclic flow). These latter estimates have been the subject of lively research recently, as they are both interesting in their own right, and connected to various applications, e.g. effective error term for the asymptotic distribution of Frobenius numbers \cite{Marklof,Li15} or limit distribution of $n$-point correlation functions of random linear forms \cite[Theorem 3.2]{Marklof2000}.      

We note that Theorem \ref{thm:main2} cannot be obtained from any of the existing estimates for the error term in the equidistribution of
orbits of the horocyclic flow.

Such an estimate was provided implicitly by Li in \cite[Theorem 1]{Li15}. As explained in \cite{Li15}, the use of \cite[Theorems 2.4.3 and A.4]{KleinM96} and the calculation of $p_K(\gd)$ provided in \cite[Section 7]{Oh02} yield an effective version of Theorem 1 from \cite{Li15}.

The best equidistribution rate, to our knowledge, follows from the results of Edwards \cite[Theorem 1]{Edwards17b} combined with Oh's bound on the decay rate of matrix coefficients \cite[Corollary 5.8 and Subsection 6.1]{Oh02} and it is as follows.

Let $U^+ = \{ u \in \gd : \lim_{t \to \infty} a_t u a_{-t} = e \}$ be the expanding horospherical subgroup with respect to $(a_t)_{t\geq 0}$, where $a_t$ is defined in \eqref{eq:at}, $\gad^u = \gad \cap U^+$, and $\mathcal{S}^m(\gd/\gad )$ be the Sobolev space with norm $\Vert \cdot \Vert_{\mathcal{S}^m}$ introduced in \cite[Section 2.3]{Edwards17b}.

\begin{thm}[see {\cite[Theorem 1]{Edwards17b}}]\label{thm:Edwards}
  For every $\varepsilon > 0$ and every $z= \begin{pmatrix} g' & 0 \\ 0 & 1 \end{pmatrix}$ with $g'\in \gdm$, there exists a constant $C_\varepsilon (z) > 0$ such that, for every $f \in \mathcal{S}^m(\gd/\gad )$ and $t \ge 0$:
  \begin{equation} \label{eq:edw}
  \left| \oint_{U^+ / \gad^u} f(a_{-t}z u\cdot \gad ) du
    - \oint_{\gd/\gad} f d\mu \right| \le C_\varepsilon (z) \Vert
  f \Vert_{\mathcal{S}^m}\, e^{- \frac{t}{4}\sqrt{\frac{d}{d-1}} + \varepsilon t},
\end{equation}
where $d\mu$ is the Haar measure of $\gd/\gad$.
\end{thm}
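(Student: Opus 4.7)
The plan is to follow the thickening approach pioneered by Margulis and made effective by Kleinbock--Margulis \cite{KleinM96}, combined with Oh's explicit bounds on the decay of matrix coefficients for $\gd$ \cite{Oh02}. The strategy is to convert the horospherical average into a translate, by $a_{-t}$, of a smooth compactly supported function on $\gd/\gad$, and then exploit quantitative mixing of the diagonal action.

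Concretely, I would fix a small neighbourhood $\Omega \subset \gd$ of the identity with an Iwasawa-type product decomposition $\Omega = W\cdot V^+$, where $V^+ \subset U^+$ is a small box and $W$ is a transversal neighbourhood lying inside the product of $\SOd \cdot A$ with the contracting horospherical subgroup $U^-$. Picking a nonnegative bump $\rho$ on $W$ with $\int \rho = 1$, the horospherical average can be written, up to an error controlled by the Sobolev norm of $f$ and the diameter $\delta$ of $W$, as
\begin{equation*}
\oint_{U^+/\gad^u} f(a_{-t}zu \cdot \gad)\, du \;=\; \oint_{U^+/\gad^u}\int_W \rho(w)\, f(a_{-t}zwu \cdot \gad)\, dw\, du \;+\; O\!\left( \delta \Vert f \Vert_{\mathcal{S}^m}\right).
\end{equation*}
After a change of variables that absorbs $z$ (at the cost of a distortion factor depending on the cuspidal excursion of $z\cdot\gad$), the leading term on the right becomes a smoothed matrix coefficient of $a_{-t}$ acting by translation on $L^2(\gd/\gad)$.

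The main analytic input is then the effective mixing statement: for a sufficiently smooth compactly supported $F$ on $\gd/\gad$,
\begin{equation*}
\left\vert \int_{\gd/\gad} F(a_{-t} g\cdot \gad)\, dg \;-\; \left(\int F\right)\cdot \vol(\gd/\gad) \right\vert \;\lesssim\; \Vert F\Vert_{\mathcal{S}^m}\, \Xi(a_t)^{1/p_K(\gd)},
\end{equation*}
where $\Xi$ is the Harish-Chandra function. The computation carried out in \cite[Section 7]{Oh02} identifies $\Xi(a_t)^{1/p_K(\gd)}$ with $e^{-\frac{t}{4}\sqrt{d/(d-1)}}$ for the singular ray in \eqref{eq:at}. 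Since a bump function of diameter $\delta$ in the transverse directions has Sobolev norm of order $\delta^{-m}$, the mixing error becomes $\delta^{-m}\Vert f\Vert_{\mathcal{S}^m}\, e^{-\frac{t}{4}\sqrt{d/(d-1)}}$. Balancing this against the thickening error $\delta\Vert f\Vert_{\mathcal{S}^m}$ and absorbing the polynomial loss in $\delta$ into the $e^{\varepsilon t}$ factor yields \eqref{eq:edw}.

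The main obstacle will be the non-uniform injectivity radius of $\gd/\gad$ near the cusp: the transversal $W$ must be small enough that the product map $(w,v)\mapsto zwv\cdot\gad$ is injective on a fundamental domain for $\gad^u$, and this smallness depends on where $z\cdot\gad$ sits in the cusp. This is precisely the source of the constant $C_\varepsilon(z)$ in \eqref{eq:edw}; eliminating it, or replacing it by an explicit function of $\dist(z\cdot\gad, \text{a fixed compact set of } \gd/\gad)$, would require a careful recurrence analysis of the orbit $(a_{-t}z\cdot\gad)_{t\ge 0}$ that goes beyond what a direct thickening argument can furnish.
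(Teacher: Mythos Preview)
This theorem is not proved in the paper at all: it is quoted from Edwards \cite{Edwards17b} (combined with Oh's explicit matrix-coefficient bounds) purely for comparison with the paper's own results, and no argument for it is given in the text. So there is no ``paper's proof'' to match, only Edwards' original one, which is representation-theoretic rather than based on thickening. The paper itself flags this distinction in the introduction: the Kleinbock--Margulis thickening route is attributed to \cite{Li15}, while Edwards' result is singled out as the sharpest known rate, obtained by a different method.

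Your sketch follows the thickening route, and the broad outline (bump in the transverse directions, convert to a matrix coefficient, invoke effective mixing) is sound and would certainly produce \emph{some} exponential rate. The gap is in the balancing step. You write that the mixing error is $\delta^{-m}\Vert f\Vert_{\mathcal S^m}e^{-\alpha t}$ with $\alpha=\tfrac14\sqrt{d/(d-1)}$, the thickening error is $\delta\Vert f\Vert_{\mathcal S^m}$, and that ``absorbing the polynomial loss in $\delta$ into the $e^{\varepsilon t}$ factor'' yields \eqref{eq:edw}. But optimising $\delta+\delta^{-m}e^{-\alpha t}$ gives $\delta\asymp e^{-\alpha t/(m+1)}$ and a total error of order $e^{-\alpha t/(m+1)}$, which is \emph{exponentially} worse than $e^{-(\alpha-\varepsilon)t}$, not polynomially worse. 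There is no choice of $\delta$ that simultaneously makes $\delta\le e^{-(\alpha-\varepsilon)t}$ and $\delta^{-m}\le e^{\varepsilon t}$. The thickening in the neutral $MA$-directions (here the centraliser of $a_t$ contains an entire $S(GL_{d-1}\times GL_1)$ block, so this is not a small obstruction) is precisely what prevents you from reaching $\alpha-\varepsilon$ by this argument as written. This is why the paper distinguishes the rate coming implicitly from \cite{Li15,KleinM96,Oh02} from the sharper one in \cite{Edwards17b}: Edwards avoids the thickening loss by working directly with the spectral decomposition of $L^2(\gd/\gad)$ and the leading exponents of $K$-finite vectors, rather than by smoothing and invoking mixing as a black box.
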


Geometrically speaking, the relation between the convergence in \eqref{eq:edw} and the one in \eqref{eq:inthoro} is as follows: the integration in \eqref{eq:inthoro} is over the domain $\pi(\Hl_r(t))$ and the latter is a torus fiber bundle over a copy of the locally symmetric space $\Ml_{d-1}=SO(d-1)\backslash \gdm/\gadm$, where the fibers are tori that dilate and equidistribute more and more as $t\to \infty $. Theorem \ref{thm:Edwards} provides a rate of equidistribution for each of these dilated torus fibers.

\medskip

While Edwards' Theorem \ref{thm:Edwards} is concerned with effective equidistribution of different closed orbits of the horospherical subgroup $U^+$ which are translates under the 1-parameter family $(a_{-t})$ (in the spirit of Sarnak and Zagier), there is another horospherical equidistribution result for fixed dense horospherical orbits, due to Dani \cite{Dani86}. Recent effective versions of Dani's horospherical equidistribution
theorem were given in \cite[Theorem 3.1]{McAdam18} and in \cite[Theorem 1.11]{Katz19}.

\medskip

In the case of functions with compact support, both Theorems \ref{thm:main1} and \ref{thm:main2} can be further improved by restricting the integration domain: instead of taking the entire base space $\Ml_{d-1}$ of the torus fiber bundle, it suffices to take a compact subset obtained by cutting off the cusp along a sufficiently high horosphere. The height of the horosphere increases with $t$ though, thus in the inequalities \eqref{eq:in1} and \eqref{eq:inthoro} corresponding to the value $t$ one can only replace $\pi(\Hl_r(t))$ by the fiber bundle with base $\Ml_{d-1} \setminus \Hlbo_{\bar \rho }(-\alpha t)$, for some well chosen constant $\alpha >0$ and geodesic ray $\bar \rho$ in $\Ml_{d-1}$. We refer to Theorem \ref{thm:main3again} and Corollary \ref{cor:main3again} for details.

\subsection{Counting results}

The dictionary between counting estimates for \emph{individual} ellipsoids and the geometry of symmetric spaces allows us also to derive counting results for $\gad$-orbits in certain increasing geometrically defined domains which we call \emph{truncated chimneys} (see Theorem \ref{thm:orbitcount}), and for $\gad$-orbits of horospheres intersecting increasing balls. For these results we use lattice point counting estimates by Huxley \cite{Hux} (for $d=2$), Guo \cite{Guo} (for $d = 3,4$) and Bentkus \& G\"otze and G\"otze \cite{BG99,G04} (for $d \ge 5$). For the reader's convenience, we present our horosphere counting result.    

\begin{thm}\label{thm:orbitcountI} (see Theorem \ref{thm:orbitcount2}) 
  Let $\Hl_r = \Hl_r(0)$
  be the level zero horosphere associated to the maximal singular
  geodesic ray from \eqref{eq:at}.
Given an arbitrary point $x$ in the symmetric space $\Pl_d$, we have the following asymptotics for the number of horospheres in the orbit $\gad \Hl_r$ that are at distance at most $T$ from $x$:
  $$ \frac{\# \{ \Hl \in \gad \Hl_r \mid \Hl \cap B(x, T)\neq \emptyset \}}{\kappa_d\, e^{\frac{T}{2}\sqrt{(d-1)d}}/\vol(\Ml_d)} = 1 + \begin{cases} 
  O\left(T^{\frac{18627}{8320}} \exp\left(-\frac{285}{416 \sqrt{2}}T\right)\right) & \text{if $d=2$,}\\
 O\left( \exp\left(-\frac{243}{158\sqrt{6}}T\right) \right) & \text{if $d=3$,} \\
 O\left( \exp\left(-\frac{43\sqrt{3}}{104}T\right) \right) & \text{if $d=4$,} \\
 O\left( \exp\left(-\sqrt{\frac{d-1}{d}}T\right) \right) & \text{if $d\ge5 $.}
  \end{cases}
$$
In the above $\kappa_d$ is an explicit constant depending on $d$ only.
\end{thm}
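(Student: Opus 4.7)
The plan is to translate the horosphere counting problem into a primitive lattice point counting problem for a dilating ellipsoid in $\R^d$, and then invoke the effective ellipsoid counting estimates of Huxley \cite{Hux}, Guo \cite{Guo}, and Bentkus--G\"otze \cite{BG99}/G\"otze \cite{G04} cited before the statement. Realising $\Pl_d$ as the space of unimodular positive definite quadratic forms under the right $\gd$-action $Q\cdot g = g^T Q g$, the normalization in \eqref{eq:at} makes the Busemann function of $r$ equal $f_r(Q)=\frac{1}{\mu}\log Q(e_d)$, so $\Hl_r = \{Q:Q(e_d)=1\}$. A direct computation then gives $\Hl_r\cdot \gamma = \{Q:Q(\gamma^{-1}e_d)=1\}$, parametrising the orbit $\gad\Hl_r$ by primitive vectors $v\in\Z^d$ modulo $v\mapsto -v$. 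Since $\gad$ acts by isometries and horoballs are convex in the CAT(0) space $\Pl_d$, the distance from $x=Q$ to $\Hl_r\cdot\gamma$ equals $|f_r(Q\cdot\gamma^{-1})| = \frac{1}{\mu}|\log Q(v)|$ with $v = \gamma^{-1}e_d$. For $T$ large enough that $e^{-\mu T}$ lies below $\min_{v\in\Z^d\setminus\{0\}} Q(v)$, the desired count becomes
\begin{equation*}
\tfrac{1}{2}\,\#\{v \in \Z^d\ \text{primitive}:Q(v)\le e^{\mu T}\}.
\end{equation*}

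I would next apply M\"obius inversion $N_Q^*(R) = \sum_{n\ge 1}\mu(n)N_Q(R/n^2)$, where $N_Q(R) = \#\{v\in\Z^d:Q(v)\le R\}$. The ellipsoid $\{v:Q(v)\le R\}$ has Euclidean volume $\omega_d R^{d/2}$ because $\det Q = 1$; summing the leading $\omega_d(R/n^2)^{d/2}$ weighted by $\mu(n)/n^d$ produces $\omega_d R^{d/2}/\zeta(d)$. Setting $R = e^{\mu T}$ and keeping the factor $\tfrac{1}{2}$ gives the main term $\frac{\omega_d}{2\zeta(d)} e^{\frac{T}{2}\sqrt{d(d-1)}}$, which identifies $\kappa_d = \omega_d\,\vol(\Ml_d)/(2\zeta(d))$. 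Inserting the error $E_Q(R) = N_Q(R) - \omega_d R^{d/2}$ with the dimension-specific bounds, I expect: for $d\ge 5$ the series $\sum_n \mu(n)E_Q(R/n^2)$ converges absolutely with the Bentkus--G\"otze/G\"otze bound $E_Q(R) = O(R^{(d-2)/2})$, yielding the stated relative rate $e^{-\sqrt{(d-1)/d}\,T}$; for $d = 2,3,4$ the weaker exponents and their log factors translate, after an optimised splitting of the M\"obius sum, into the stated bounds.

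The main technical obstacle is this M\"obius step when the ellipsoid error exponent $\theta$ satisfies $2\theta<1$, as for $d=2$ with Huxley's $\theta = 131/416$. A naive bound $\sum_n |E_Q(R/n^2)|$ diverges, and the obvious truncation at a cutoff $Y \le R^{1/2}$ balances Huxley's $R^{131/416}$ against the tail $R\,Y^{-1}$ only at $R^{1/2}$, wiping out the Huxley improvement. Recovering the full exponent $R^{131/416}$ in the primitive count will require either invoking sharper known asymptotics for primitive lattice points in planar ellipses (exploiting cancellation in the $\mu(n)$-weighted sum) or a more delicate splitting/smoothing argument, and carefully tracking the resulting log factors through this transformation is what should ultimately produce the unusual exponent $T^{18627/8320}$ for $d=2$.
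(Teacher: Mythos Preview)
Your reduction is exactly the paper's: one shows $N_1(\El_Q,R) = 2\,\#\{\Hl \in \Hl_r\Gamma : \Hl \cap B(Q,T) \neq \emptyset\}$ for $T = 2\sqrt{d/(d-1)}\log R$ (the paper does this via the coset bijection $\Gamma/\gop \leftrightarrow \widehat{\Z}^d$ together with an index-two bookkeeping between $\gop$ and $P\Gamma_0$; your parametrisation by $v=\gamma^{-1}e_d$ modulo $v\mapsto -v$ is equivalent), identifies $\kappa_d = \frac{\omega_d}{2\zeta(d)}\vol(\Ml_d)$, and then reads off the error from the primitive-count asymptotics \eqref{eq:asympsN1}.

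The one place you diverge is that you attempt the M\"obius transfer from $E_0$ to $E_1$ inside the argument, whereas the paper outsources it: for $d\ge 3$ it is Proposition~\ref{prop:latttoprimlatt} (whose hypothesis $1<\alpha<d$ is precisely what makes $\sum_k k^{-\alpha}$ converge), and for $d=2$ the paper simply \emph{asserts} the Huxley-quality bound for $E_1$ in \eqref{eq:asympsN1} as ``known from Section~\ref{sec:latcount}''. The obstacle you flag---that naive truncation balances only at the trivial exponent because, without extra input on $M(x)=\sum_{n\le x}\mu(n)$, the tail of the main term cannot be made smaller---is genuine, and the paper does not resolve it either; it treats the $d=2$ primitive bound as a black-box input. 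So the clean fix for your write-up is to do the same: for $d\ge 3$ your M\"obius argument reproduces Proposition~\ref{prop:latttoprimlatt}, and for $d=2$ quote the primitive-ellipse estimate directly rather than trying to extract it from Huxley's full-count bound by elementary splitting.
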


\medskip

See for comparison Theorem 5 in \cite{DKL16}, where the only effective counting of horospherical objects in higher rank that we are aware of is provided. The result in \cite{DKL16} holds in a more general setting, and concerns slightly different objects (what is called ``horosphere'' in \cite{DKL16} is in fact a horocycle, an orbit of a unipotent subgroup). 
 
\subsection{Plan of the paper} The paper is organised as follows. In Section
\ref{sec:bassymmspace}, we introduce our family of expanding
horospheres in the symmetric space $\Pl_d$ and associated Haar
measures. In Section \ref{sec:volcalclocsym}, we discuss the projection of
these horospheres in the finite volume locally symmetric quotient
$\Ml_d = \Pl_d / \gad$. Following classical arguments of
Siegel, we prove a useful volume result for these projected
horospheres (Proposition \ref{prop:siegel}) involving the Riemann zeta
function. In Section \ref{sec:latcount} we present the relevant
lattice point counting results for expanding ellipsoids from the
literature. Section
\ref{sec:orbitcounting} contains a reformulation of primitive lattice
point counting problems in ellipsoids as counting problems of
$\gad$-orbits in the symmetric space $\Pl_d$. We use this connection, together with results from
Section \ref{sec:latcount}, to establish
Theorem \ref{thm:orbitcount} and Theorem \ref{thm:orbitcount2} (i.e. Theorem \ref{thm:orbitcountI}). Section \ref{sec:effeqdisthorospheres}
is devoted to the proof of our main results. We use an $L^2$-integral error estimate for primitive lattice points in ellipsoids to derive an integrated
version of the effective equidistribution of horospheres, Proposition \ref{prop:equihoroball0}. We then apply specific exponential
decay results for functions on the real line from Appendix
\ref{app:realline}. These results
\begin{itemize}
\item[(a)] imply that there are always very well equidistributed
  horospheres $\Hl_r(t)$ for specific $t$-values within any shrinking interval $[T,T+C_0 e^{-\varepsilon T}]$ (Theorem \ref{thm:main1}
  above);
\item[(b)] allow to complete the proof of an
  equidistribution result for all $t$-levels (Theorem \ref{thm:main2}).
\end{itemize}

\medskip

\subsection*{Acknowledgements}
We are grateful to the anonymous referee who pointed out an improvement of an earlier effective equidistribution result, by means of an $L^2$-integral error estimate. We also thank Shucheng Yu for helpful communications on the literature related to Rogers' second moment formula. We thank Sam Edwards for pointing out reference \cite{Edwards17b} and for his explanations on how to derive from it the 
explicit equidistribution rate of expanding horocyclic orbits presented in Theorem \ref{thm:Edwards}. We thank Jens Marklof for many helpful suggestions and encouraging feedback, Alberto Verjovsky for relevant references, and Tom Ward for various comments concerning the introduction. We thank Jingwei Guo,  Martin Huxley and Wolfgang M\"uller for useful communications on the known estimates in lattice point counting.

\section{Basics about the symmetric space $\Pl_d$}
\label{sec:bassymmspace}

\subsection{Preliminaries on terminology and notation}\label{sect:prelim}

All vectors in this paper are column vectors unless otherwise stated. 

Let $e_1,\dots,e_k$ be the standard basis in $\Z^k$. The $k \times k$ identity matrix is denoted by $\Id_k$.
We denote by $\diag(c_1,\dots,c_k)$ the diagonal matrix having
entries $c_1,c_2,\dots ,c_k$ on the diagonal.

\medskip

Let $X$ be a complete Riemannian manifold of non-positive
curvature. Two geodesic rays in $X$ are called {\it{asymptotic}}
if they are at finite Hausdorff distance from one another. This
defines an equivalence relation $\sim$ on the set $\mathcal{R}$
of geodesic rays in $X$. The \textit{boundary at infinity of }$X$, denoted by
$\partial_\infty X$, is
the quotient $\mathcal{R}/\sim$. Given $\xi\in \partial_\infty X$ and a geodesic ray $r$ in the equivalence class $\xi$, one writes $r(\infty)=\xi$.

Let $r$ be a geodesic ray in $X$. {\it The Busemann function
associated to $r$} is the function
$$
f_r:X\to \R \, ,\; f_r(x)=\lim_{t\to \infty}[\dist (x,r(t))-t]\; .
$$
The Busemann functions of two asymptotic rays in $X$ differ by a
constant (see \cite[Cor. II.8.20]{BH}). 

The level hypersurfaces $\Hl_r(a)=\lbrace
x\in X \; ;\;  f_r(x)= a \rbrace$ are called {\it horospheres},
the sublevel sets 
\begin{equation} \label{eq:Hbr(a)}
\Hlb_r(a)=\lbrace x\in X \; ;\;  f_r(x)\leq a
\rbrace
\end{equation}
and $\Hlbo_r(a)=\lbrace x\in X \; ;\;  f_r(x)< a \rbrace$ 
are called {\it{closed horoballs}}, respectively {\it open horoballs}. 

For $a=0$ we use the simplified notation $\Hl_r$ for the horosphere, and $\Hlb_r$ for the
closed horoball, respectively.

Suppose moreover that $X$ is simply connected. Given an arbitrary point $x\in X$ and an arbitrary point at
infinity $\xi \in \partial_\infty X$, there exists a unique
geodesic ray $r$ with $r(0)=x$ and $r(\infty )=\xi$. This allows to define, for $a,b \in \R$, a natural map from $\Hl_r(b)$ to  $\Hl_r(a)$. Indeed, every geodesic ray with origin $x_b$ in $\Hl_r(b)$ and asymptotic to $r$ extends uniquely to a bi-infinite geodesic in $X$, and the latter intersects $\Hl_r(a)$ in a unique point $x_a$. The map $\Phi_{b,a}$ defined by $x_b\mapsto x_a$ coincides with the nearest point projection of $\Hl_r(b)$ to $\Hl_r(a)$, and it is a diffeomorphism from $\Hl_r(b)$ to $\Hl_r(a)$, with inverse $\Phi_{a,b}$. If $b>a$ then $\Phi_{b,a}$ contracts distances and volumes. 

\begin{notation}
In what follows we use, for simplicity, the notation $\Phi_t$ for the map
$\Phi_{0,t}: \Hl_r(0) \to \Hl_r(t).$ 
\end{notation}

\comment
Therefore we shall sometimes call them
{\it{Busemann functions of basepoint }}$\xi$, where $\xi $ is the
common point at infinity of the two rays. The families of
horoballs and horospheres are the same for the two rays. We shall
say that they are horoballs and horospheres {\it of basepoint
}$\xi$.
\endcomment

\subsection{Geometry of $\Pl_d$}\label{sec:geompd} We briefly recall two
realisations of our symmetric space. We use a notation similar to the
one in \cite[Section 3]{Dr05}.  Let $\Pl_d$ be the space of positive definite quadratic forms on $\R^d$ of determinant one. For
the quadratic form $Q \in \Pl_d$, we denote its evaluation of the
vector $v \in \R^d$ by $Q(v)$, that is,
$$ Q(v) = v^\top M_Q v, $$
where $M_Q$ is the symmetric matrix representing the quadratic form $Q$ in the standard basis. 

The group $\gd$ acts transitively from the right on $\Pl_d$. More precisely, for $g \in \gd$ and $Q \in \Pl_d$, let
$$ (Q \cdot g)(v)= Q(g v) = (g v)^\top M_Q (g v) \quad \text{for all $v \in \R^d$}. $$

We have that
\begin{equation}\label{eq:fac2}
\{ g \in \gd \mid  \text{$Q \cdot g =
Q$ for all $Q \in\Pl_d$} \} = Z(G_d)=\begin{cases} \{ \Id_d \} 
& \text{if $d$ is odd,} \\ \{ \pm \Id_d \} & \text{if $d$ is even.} \end{cases}
\end{equation}
Thus, it is the projectivization ${\rm{P}}G_d$ that acts
faithfully on $\Pl_d$. 

\medskip

The space $\Pl_d$ can be endowed with a distance function $\dist_{\Pl_d}$ that is invariant with respect to the action of $\gd$ \cite[Section 3]{Dr05}.

Let $Q_0$ be the quadratic form corresponding to the standard Euclidean inner product, that is $Q_0(v) = v^\top v$. There is a natural identification of $\Pl_d$ with the homogeneous space $\SOd \backslash \gd$, given by
$$\hq: \SOd \backslash \gd \to \Pl_d, \quad \SOd \cdot g \mapsto
Q_0 \cdot g. $$

The identification $\hq$ is equivariant with respect to the two actions to the right of  $\gd$.

We will often switch between these two representations of our symmetric space, and the corresponding actions.

Next, we introduce the horospheres associated to two (unit speed) geodesic rays
\begin{equation} \label{eq:rt}
r(t) = \SOd \cdot a_t \in \SOd \backslash \gd 
\end{equation}
with
\begin{equation} \label{eq:atagain}
a_t = \diag(e^{\lambda t/2},\dots,e^{\lambda t/2},e^{-\mu t/2}) 
\end{equation}
and
$$
\lambda = \sqrt{\frac{1}{(d-1)d}}\, , \quad 
\mu = (d-1) \lambda = \sqrt{\frac{d-1}{d}}, 
$$
and
\begin{equation} \label{eq:rhot}
\rho (t) = \SOd \cdot \tilde{a}_t \in \SOd \backslash \gd 
\end{equation}
with
\begin{equation} \label{eq:atagainrho}
\tilde{a}_t = \diag(e^{\mu t/2},e^{-\lambda t/2},\dots ,e^{-\lambda t/2}). 
\end{equation}

Note that $r(t)$, viewed as quadratic form, has the symmetric matrix $\diag(e^{\lambda t},\dots,e^{\lambda t},e^{-\mu t}) = (a_t)^\top a_t$ as its representative. Likewise, $\rho (t)$ is the quadratic form with symmetric matrix $\diag(e^{\mu t},e^{-\lambda t},\dots,e^{-\lambda t}) = (\tilde a_t)^\top \tilde a_t$.

Both rays are one-dimensional faces of the \textit{Weyl chamber} $\Wl_0$ composed of quadratic forms with symmetric matrices $\diag(e^{s_1},\dots,e^{s_d}),$ where $ \sum s_i=0 $ and $s_1\geq s_2\geq \dots \geq s_d $.

Let us now discuss the Busemann functions corresponding to the geodesic rays $r$ and $\rho$. Their explicit formulas are given in \cite[Lemma 3.2.1]{Dr05}. The Busemann function $f_r: \Pl_d \to \R$ corresponding to the
geodesic ray $r$ is given by  
\begin{equation} \label{eq:busem} 
f_r(Q) = \lim_{t \to \infty} (\dist_{\Pl_d}(r(t),Q) - t) =  \sqrt{\frac{d}{d-1}} \log Q(e_d).
\end{equation}
Likewise, the Busemann function $f_\rho : \Pl_d \to \R$ corresponding to $\rho$ is explicitly described as follows. 
For every quadratic form $Q$, given $Q^{\perp e_1}$ the restriction of $Q$ to the hyperplane $\langle e_2,\dots , e_d\rangle$, and $\det Q^{\perp e_1}$ the determinant of the matrix representing $Q^{\perp e_1}$ in the basis $\{ e_2,\dots , e_d\}$,    
\begin{equation} \label{eq:busemrho} 
f_\rho (Q) = \sqrt{\frac{d}{d-1}} \log \det Q^{\perp e_1}. 
\end{equation}

The (closed) horoballs of the ray $r$ are therefore
\begin{equation}\label{eq:horo}
 \Hlb_r(t) = {f_r}^{-1}((-\infty,t]) = \left\{ Q \in \Pl_d \mid Q(e_d) \leq e^{t \sqrt{\frac{d-1}{d}}} \right\}, 
\end{equation}
and the horospheres $\Hl_r(t) = {f_r}^{-1}(t)$ are defined similarly by replacing ``$\leq$'' in the right hand side of \eqref{eq:horo} by ``$=$''. 

\begin{notation}
For every $1\leq k <d$ and subsets $A\subset M_{k\times k} (\R ), B\subset M_{k\times (d-k)} (\R ), C\subset M_{(d-k)\times k} (\R ), D\subset M_{(d-k)\times (d-k)} (\R )$, we use the notation
$$
\begin{pmatrix} A & B \\ C & D \end{pmatrix} = \left\{ \begin{pmatrix} a & b 
\\ c & d \end{pmatrix} \in \gd \Bigm| a\in A, b\in B, c\in C, d\in D \right\}. 
$$

We write $\R^k$ instead of $M_{1\times k} (\R )$, or $M_{k\times 1} (\R )$, with the convention that the vectors of $\R^k$ are written as columns or as rows, as appropriate.  
\end{notation}

\medskip

The map $\Phi_{s,t}$ defined as in Section \ref{sect:prelim}, between two horospheres $\Hl_r(s), \Hl_r(t)$ of $\Pl_d$, with $s<t$, is a diffeomorphism that expands the volume exponentially in $t-s$ (see Lemma \ref{lem:horovolrel}, \eqref{eq:volhormod}). Therefore, we refer to the family $\Hl_r(t)$ with $t$ increasing from $0$ to $+\infty$ as the \emph{family of expanding horospheres associated to the ray~$r$}.

\begin{example} \label{ex:hypplane1}
In the case $d=2$, the model $\Pl_2$ of the $2$-dimensional symmetric space   agrees, up to rescaling, with the hyperbolic plane model $\H = \{ z \in \C \mid {\rm{Im}}(z) > 0 \}$ with distance function $\dist_{\H}$ given by
  $$ \sinh\left( \frac{1}{2}\dist_{\H}(z_1,z_2) \right) = \frac{|z_1 -z_2|}{2 \sqrt{{\rm{Im}}(z_1){\rm{Im}}(z_2)}}. $$
  The isometry between the two spaces is given by
  \begin{equation} \label{eq:homhypplane} 
  (\SO \backslash \SLR, \dist_{\Pl_d}) \to ( \H, \sqrt{2}\, \dist_{\H} ), \quad \SO \cdot g \mapsto g^\top \cdot i, 
  \end{equation}
  where $\SLR$ acts on $\H$ in the usual way by M\"obius transformations. Moreover, under this identification we have $r(t) = \rho(t) =  e^{\frac{t}{\sqrt{2}}}$, $f_r(z) = -\sqrt{2} \log {\rm{Im}}(z)$, and $\Hlb_r(t) = \{z \in \H \mid {\rm{Im}}(z) \ge e^{\frac{t}{\sqrt{2}}} \}$.
\end{example}

\medskip

\subsection{Haar measures}\label{sec:Haar} In what follows, we provide an alternative
description of the symmetric space $\Pl_d$ and of the horospheres determined by the ray defined in \eqref{eq:rt} and \eqref{eq:atagain}, in terms of the Iwasawa
decomposition of $\gd$. This third description is, in a certain sense, a system of coordinates, it allows to describe Haar measures on the whole groups and several subgroups, but it is not endowed with an action by isometries of the whole group.  

Recall that the Iwasawa decomposition of $\gd$ is given by
$$ \gd = K_d N_d A_d = K_d A_d N_d,$$ 
with $K_d = \SOd$, $A_d = \left\{ \diag(e^{t_1},\dots,e^{t_d}) \in \gd \mid \sum_{j=1}^d t_j = 0 \right\}$
, and
\begin{equation} \label{eq:Nd} 
N_d = \left\{ n(x) \in \gd\, \Biggm|\, n(x) = \begin{pmatrix} 1 & & 0 \\ & \ddots & \\ x_{ij} & & 1 \end{pmatrix} \, \text{with} \, x_{ij} \in \R \right\}. 
\end{equation}

We can identify our symmetric space $(\Pl_d,\dist_{\Pl_d})$ with the solvable group $S_d = N_d A_d= A_d N_d$ with a right-invariant metric {\it{via}} the identifications
\begin{equation} \label{eq:Iwd} 
S_d \to \SOd \backslash \gd \to \Pl_d, \quad g \mapsto \SOd \cdot g \mapsto Q_0 \cdot g. 
\end{equation}
The identification between $S_d$ and $\Pl_d$ described above will henceforth
be called the \emph{(canonical) Iwasawa map}, and denoted by
$$ \hc_d: S_d \to \Pl_d, $$ 
and we will call $S_d$ the \emph{(canonical) Iwasawa space of coordinates} of the
symmetric space $\Pl_d$. 

The horosphere $\Hl_r(t) \subset \Pl_d$, with $t\in \R$,
is identified {\it{via}} $\hc_d$ with
$a_{-t}N_d A' = a_{-t}A' N_d \subset S_d$, where $A' =  \left\{ \diag(e^{t_1},\dots,e^{t_{d-1}},1) \in \gd \mid
  \sum_{j=1}^{d-1} t_j = 0 \right\}.$
 
The maps $\Phi_t$ defined in the end of Section \ref{sec:geompd} can be described
{\it{via}}  $\hc_d$ as follows:
\begin{equation} \label{eq:Phit}
\Phi_t(\hc_d(g)) = \hc_d(a_{-t} g).
\end{equation}
This new description makes it clear that the maps $\Phi_t$ have natural extensions as diffeomorphisms of the whole space $\Pl_d$.

Next, we describe Haar measures on the groups $N_d$, $A_d$
and $A'$. The identification
$$ N_d \ni n(x) \mapsto x = (x_{ij})_{1 \le j < i \le d} \in \R^{d(d-1)/2} $$
induces a Haar measure $dn$ on $N_d$ as the pullback of a suitable rescaling $c_d \cdot dx$ of the Lebesgue measure 
$$ dx = \prod_{1 \le j < i \le d} dx_{ij} $$
on $\R^{d(d-1)/2}$. The precise value of the rescaling constant $c_d$ is not relevant for us, but it can be checked that we
have $c_d = \sqrt{2}$ when $d=2$.

The map
$$ A_d \ni \diag(e^{t_1},\dots,e^{t_d}) \mapsto 2(t_1,\dots,t_d) \in \R^d$$
is an isometry between the totally geodesic flat $\hc_d(A_d)$ in the
symmetric space $(\Pl_d,\dist_{\Pl_d})$ and the hyperplane 
$$ \Sigma_d = \left\{ (\tau_1,\dots,\tau_d)\in \R^d \Biggm| \sum_{j=1}^d \tau_j = 0 \right\} \subset \R^d $$
with the induced standard Euclidean
metric. We induce a Haar measure $da$ on $A_d$ as the pullback of the associated standard Lebesgue measure on $\Sigma_d$ under this map.  Similarly, we introduce the Haar measure $da'$
on $A'$ as the pullback of the standard Lebesgue measure
of the Euclidean hyperplane $\Sigma_{d-1} \subset \R^{d-1}$ {\it{via}} the identification
$$ A' \ni \diag(e^{t_1},\dots,e^{t_{d-1}},1) \mapsto 2(t_1,\dots,t_{d-1}) \in \Sigma_{d-1}. $$
Then the pullback of the Haar measure
$da$ on $A_d$ under the identification
$$ \R \times A' \cong A_d, \quad (t, a') \mapsto a_{-t} a' $$
coincides with the product measure $dt \, da'$.

We denote the normalized bi-invariant Haar measure on $K_d = SO(d)$ by $dk$. These Haar measures define a
bi-invariant Haar measure $dg$ on $\gd$ {\it{via}}
\begin{equation} \label{eq:intiwa} 
\int_{\gd} f(g) dg = \int_{K_d} \int_{N_d} \int_{A_d}
f(kna) da\, dn\, dk, 
\end{equation}
for any function $f \in C_c(\gd)$.
We check that for every $a =\diag(a_1,\dots,a_d) \in A_d$ 
$$n(x) a = a n(x'), \mbox{ with }x_{ij}' = \frac{a_j}{a_i}  x_{ij}\mbox{ for }1 \le j < i \le d.
$$ 
We can then write
\begin{multline*} 
\int_{N_d} \int_{A_d} f(kna)da\, dn
= \int_{\R^{d(d-1)/2}} \int_{A_d} f(kn(x)a) da\, dx
= \int_{A_d} \int_{\R^{d(d-1)/2}} f(kan(x')) \chi_d(a) dx'\, da \\
= \int_{N_d} \int_{A_d} f(kan) \chi_d(a) da\, dn,
\end{multline*}
where
$$ \chi_d(a) = \prod_{1 \le j < i \le d} \frac{a_i}{a_j}. $$
Therefore,
$$ \int_{\gd} f(g)dg = \int_{K_d} \int_{N_d} \int_{A_d} f(kan) \chi_d(a) da\, dn\, dk. $$
Similarly, we have for $f \in C_c(\Pl_d)$,
$$ \int_{\Pl_d} f d\vol_{\Pl_d} = \int_{N_d} \int_{A_d} f\circ \hc_d(an) \chi_d(a) da\, dn, $$
and for $f \in C_c(\Hl_r(t))$,
\begin{equation} \label{eq:inthor} 
\int_{\Hl_r(t)} f d\vol_{\Hl_r(t)} = \int_{N_d} \int_{A'} f\circ \hc_d(a_{-t}a'n) \chi_d(a_{-t}a') da'\, dn. 
\end{equation}

\begin{lem} \label{lem:horovolrel}
  For any function $f \in C_c(\Hl_r(t))$, we have
  \begin{equation} \label{eq:inthormod}
  \int_{\Hl_r(t)} f d\vol_{\Hl_r(t)} = e^{\frac{t}{2} \sqrt{(d-1)d}} \int_{\Hl_r(0)}
  f \circ \Phi_t\, d\vol_{\Hl_r(0)}. 
  \end{equation} 
  This implies, in particular,
  \begin{equation} \label{eq:volhormod} 
  \vol_{\Hl_r(t)}(\Phi_t(K)) = e^{\frac{t}{2} \sqrt{(d-1)d}} \vol_{\Hl_r(0)}(K). 
  \end{equation}
  Moreover, for any function $f \in C_c(\Pl_d)$, we have
  \begin{equation} \label{eq:fubinihor} 
  \int_{\Pl_d} f d\vol_{\Pl_d} = \int_\R e^{\frac{t}{2} \sqrt{(d-1)d}} \int_{\Hl_r(0)}
  f \circ \Phi_t d\vol_{\Hl_r(0)}\, dt. 
  \end{equation}
All the above remains true if the geodesic ray $r$ is replaced by $\rho$.
\end{lem}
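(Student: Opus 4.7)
The plan is to unwind the explicit integration formula~\eqref{eq:inthor} and track the dependence on $t$ through the Jacobian character $\chi_d$.

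First, I would compute $\chi_d(a_{-t}a')$ for $a'=\diag(a_1',\dots,a_{d-1}',1)\in A'$. Writing
\[
a_{-t}a' \;=\; \diag\bigl(e^{-\lambda t/2}a_1',\dots,e^{-\lambda t/2}a_{d-1}',\,e^{\mu t/2}\bigr),
\]
and splitting the product defining $\chi_d$ into pairs $(j,i)$ with $i<d$ and those with $i=d$, the factors $e^{\pm\lambda t/2}$ cancel within the first group and leave exactly $\chi_{d-1}(a')$, while the second group contributes
\[
\prod_{j=1}^{d-1}\frac{e^{\mu t/2}}{e^{-\lambda t/2}a_j'} \;=\; \frac{e^{(d-1)(\lambda+\mu)t/2}}{\prod_{j=1}^{d-1}a_j'} \;=\; e^{(d-1)(\lambda+\mu)t/2},
\]
using $\prod_{j=1}^{d-1}a_j'=1$. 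A direct computation gives $\lambda+\mu = \sqrt{d/(d-1)}$, hence $(d-1)(\lambda+\mu)=\sqrt{(d-1)d}$, and therefore
\[
\chi_d(a_{-t}a') \;=\; e^{\tfrac{t}{2}\sqrt{(d-1)d}}\,\chi_{d-1}(a').
\]

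Next, setting $t=0$ in~\eqref{eq:inthor} identifies $\chi_{d-1}(a')\,da'\,dn$ as the measure on $\Hl_r(0)$ in the $(a',n)$-coordinates, since the factors of $\chi_d(a')$ with $i=d$ collapse to $\prod_j(1/a_j')=1$. By~\eqref{eq:Phit} one has $f\circ\hc_d(a_{-t}a'n)=(f\circ\Phi_t)\circ\hc_d(a'n)$, so pulling the scalar $e^{\frac{t}{2}\sqrt{(d-1)d}}$ out of~\eqref{eq:inthor} yields~\eqref{eq:inthormod}; specialising to $f=\mathbf{1}_{\Phi_t(K)}$ then gives~\eqref{eq:volhormod}. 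For~\eqref{eq:fubinihor}, I would start from the Iwasawa integration formula for $\int_{\Pl_d}f\,d\vol_{\Pl_d}$ recorded in Section~\ref{sec:Haar}, parametrise $A_d\cong\R\times A'$ by $a=a_{-t}a'$ (under which $da$ pulls back to $dt\,da'$, as stated in Section~\ref{sec:Haar}), and apply Fubini together with~\eqref{eq:inthormod}.

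The $\rho$ case is handled by the symmetric argument using the opposite Iwasawa decomposition of $\gd$, in which $N_d$ is replaced by the group of upper unitriangular matrices and $a_{-t}$ by $\tilde a_{-t}$. Here the singular direction is the first coordinate rather than the last, but the roles of $\lambda$ and $\mu$ in the analogous character computation are interchanged, producing the identical expansion rate $\tfrac12\sqrt{(d-1)d}$. The main obstacle throughout is bookkeeping rather than any substantial difficulty: one must verify the sign conventions so that $e^{\frac{t}{2}\sqrt{(d-1)d}}$ is genuinely an expansion factor as $t$ increases, and check that the opposite Iwasawa decomposition still produces the product Haar measure $dt\,da'$ on $A_d$ with the same normalisation.
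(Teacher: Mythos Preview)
Your argument is correct and follows the paper's approach almost exactly: both proofs unwind \eqref{eq:inthor}, use the multiplicativity of $\chi_d$ and the identity $f\circ\hc_d(a_{-t}g)=(f\circ\Phi_t)\circ\hc_d(g)$, and reduce everything to the computation $\chi_d(a_{-t})=e^{\frac{t}{2}\sqrt{(d-1)d}}$; your explicit factorisation $\chi_d(a_{-t}a')=e^{\frac{t}{2}\sqrt{(d-1)d}}\chi_{d-1}(a')$ is just the paper's multiplicativity step written out. The only genuine difference is in handling $\rho$: you propose passing to the opposite (upper unitriangular) Iwasawa decomposition, whereas the paper keeps the same lower-triangular $N_d$ and simply replaces $a_{-t}$ by $\tilde a_{-t}$, checking that $\chi_d(\tilde a_{-t})=\prod_{j=1}^{d-1}\frac{e^{\lambda t/2}}{e^{-\mu t/2}}=e^{\frac{t}{2}\sqrt{(d-1)d}}$ as well---this avoids introducing a second decomposition and the attendant normalisation checks you flag as the main obstacle.
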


\begin{proof} The identity \eqref{eq:inthormod} follows directly
from \eqref{eq:inthor}, $\chi_d(a 'a) = \chi_d(a) \chi_d(a')$, 
$f\circ \hc_d(a_{-t}g) = f \circ \Phi_t(\hc_d(g))$, and
$$ \chi_d(a_{-t}) = \prod_{j=1}^{d-1} \frac{e^{\mu t/2}}{e^{-\lambda t/2}} = e^{\frac{t}{2}(d-1)(\mu+\lambda)} = e^{\frac{t}{2} \sqrt{(d-1)d}}. $$
Since the horospheres $\Hl_r(t)$, $t \in \R$, form a
foliation of the space $\Pl_d$ by equidistant hypersurfaces, we 
have by Fubini that for all $f \in C_c(\Pl_d)$,
$$ \int_{\Pl_d} f d\vol_{\Pl_d} = \int_\R \int_{\Hl_r(t)} f d\vol_{\Hl_r(t)}\, dt. 
$$
The identity \eqref{eq:fubinihor} follows then directly from
\eqref{eq:inthormod}.

The same proofs carry over verbatim to the case of the geodesic ray $\rho$, except for the following slightly different calculation:
$$
\chi_d(\tilde{a}_{-t}) = \prod_{j=1}^{d-1} \frac{e^{\lambda t/2}}{e^{-\mu t/2}} = e^{\frac{t}{2}(d-1)(\mu+\lambda)} = e^{\frac{t}{2} \sqrt{(d-1)d}}. 
$$
\end{proof}

\section{Volume calculations for locally symmetric quotients}
\label{sec:volcalclocsym}

\subsection{The locally symmetric space $\Ml_d$} \quad

\begin{notation}
Henceforth, we further simplify the notation introduced in \ref{notat:gammadpi} for $\SLdR$ and $\SLdZ$, and we denote them simply by $G$ and $\Gamma$, respectively.

We denote the projectivizations of $G$ and of $\Gamma $ by $PG$ and $P\Gamma$ respectively. If $d$ is odd then both projectivizations coincide with the groups, while if $d$ is even then the projectivizations are the quotients by $Z(G)=\{ \pm \Id_d \}$.
\end{notation}

The difference, in terms of faithfulness of the action, between the odd- and the
even-dimensional case, as pointed out in \eqref{eq:fac2}, needs to be taken into account in our counting arguments presented later on. For this reason, we introduce the following function.

\begin{notation}\label{notat:alphad} 
  Let
  \begin{equation} \label{eq:alpha} 
  \alpha(d) = |Z(G)| = \begin{cases} 1 & \text{if $d$ is odd,} \\
 2 & \text{if $d$ is even.} \end{cases} 
 \end{equation}
\end{notation}

\medskip

The quotient $\Ml_d = \Pl_d/\Gamma  = \Pl_d/P\Gamma$ inherits a metric and a volume
element $d\vol_{\Ml_d}$ from the metric and the volume form
$d\vol_{\Pl_d}$. This
makes $\Ml_d$ a non-compact locally symmetric space of finite volume.

The restriction of the projection $\pi$ (see \ref{notat:gammadpi}) to the Weyl chamber $\Wl_0$ defined in section \ref{sec:geompd} is an isometric embedding, and its image ${\bar{\Wl}}_0$ is at finite Hausdorff distance $c_d$ from $\Ml_d$ \cite[$\S 2$]{Le}.

In particular $\bar r = \pi \circ r$ and $\bar \rho = \pi\circ \rho$ are geodesic rays in $\Ml_d$, therefore according to Section \ref{sect:prelim} they define Busemann functions, horospheres and horoballs. For $a<0$ with $|a|$ large enough, $\Hlb_{\bar r}(a)$ is the projection of $\Hlb_{r}(a)$, and $\Hlb_{\bar \rho}(a)$   is the projection of $\Hlb_{\rho }(a)$. Moreover, this and \eqref{eq:horo} imply that 
\begin{equation}\label{eq:horogamma}
\pi^{-1} \left( \Hlb_{\bar{r}}(a) \right)= \bigcup_{\gamma\in \Gamma} \Hlb_{r}(a)\gamma = \bigcup_{v\in {\widehat \Z}^d}  \left\{ Q \in \Pl_d \mid Q(v) \leq e^{a \sqrt{\frac{d-1}{d}}} \right\}. 
\end{equation}

For all the statements above we refer to \cite[$\S 3.6$]{Dr05} and references therein.

\begin{notation}\label{notat:fld}
Let $\Fl_d \subset \Pl_d$ be a fundamental domain of the
$\Gamma$-right action (and of the $P\Gamma$-right action) on $\Pl_d$, containing   the Weyl chamber $\Wl_0$. 
\end{notation}


\medskip

We now recall some facts about stabilizers and quotients of horospheres. 
\medskip

\begin{remarks}\label{rem:stabhoro}

\begin{enumerate}
\item\label{item:semidir} We begin by noting that, as the actions by isometries in this paper are to the right, we use an adapted version of semidirect product. Given two groups $N,H$ and an action of $H$ on $N$ \emph{to the right}, $(n, h)\mapsto n\cdot h$, the corresponding semidirect product $G=H\ltimes N$ is defined by $(h_1, n_1)(h_2, n_2) = (h_1h_2, (n_1 \cdot h_2) n_2).$ With this definition, we have that $(H \ltimes K )\backslash (H\ltimes G)$ has a $(H\ltimes G)$-equivariant identification to $K \backslash G$, where the right action of $H\ltimes G$ on $G$ is $g\cdot (h, g')= (g\cdot h) g'$.

\medskip
  
\item\label{item:stabr}  It can be derived from \eqref{eq:busem} that, for any $t\in \R$,
\begin{equation*}
{\rm{Stab}}_{\gd}(\Hl_r(t))= {\rm{Stab}}_{\gd}(\Hl_r(0)) = \{ g \in \gd \mid g e_d = \pm e_d \} 
=  \begin{pmatrix} \SLdpmR & 0 \\ \R^{d-1} & \pm 1 \end{pmatrix} ,
\end{equation*}
where $\SLdpmR$ equals $\gdm$ if the lower right corner is $1$, and it equals 
${\rm SL}_{d-1}^-(\R) = \{ g\in {\rm GL}_{d-1}(\R) \mid \det g = -1\}$  if the lower right corner is $-1$. 

With the notation 
\begin{equation} \label{eq:two} 
  \two  = \begin{pmatrix} \Id_{d-2} & 0 \\ 0 & \pm \Id_2 \end{pmatrix} ,
 \end{equation}  
an alternative way of writing $ {\rm{Stab}}_{\gd}(\Hl_r(0))$ is as a semidirect product, as follows
\begin{equation}\label{eq:stabsemidir}
\two\ltimes  \begin{pmatrix} \gdm & 0 \\ \R^{d-1} & 1 \end{pmatrix}.
\end{equation}
When $d$ is even, ${\rm{Stab}}_{\gd}(\Hl_r(0))$ can also be written as a direct product
\begin{equation}\label{eq:stabsemidir2}
Z(G_d) \times \begin{pmatrix} \gdm & 0 \\ \R^{d-1} & 1 \end{pmatrix}.
\end{equation}  

\medskip

\item The description of $\Hl_r(0)$ in the model $\SOd \backslash \gd $, with the identification described in the end of \eqref{item:semidir}, becomes
$$
\begin{pmatrix} \SOdm & 0 \\ 0 & 1 \end{pmatrix}\backslash \begin{pmatrix} \gdm & 0 \\ \R^{d-1} & 1 \end{pmatrix}. 
$$  
\end{enumerate} 
\end{remarks}

\begin{notation}\label{notatG0} Let $\Gamma_0 = \Gamma \cap {\rm{Stab}}_G(\Hl_r(0)) = \Gamma \cap {\rm{Stab}}_G(\Hl_r(t))$ for every $t\in \R$. We denote by $P\Gamma_0$ the projectivization of $\Gamma_0$, equal to $\Gamma_0$ when $d$ is odd. 
Remark \ref{rem:stabhoro}, \eqref{item:stabr}, implies that 
$$
\Gamma_0 = \two \ltimes \begin{pmatrix} \SLdmZ & 0 \\ \Z^{d-1} & 1 \end{pmatrix}
= Z(G) \times \begin{pmatrix} \SLdmZ & 0 \\ \Z^{d-1} & 1 \end{pmatrix}\mbox{ when $d$ is even}.
$$
We denote by $\gop$ the normal subgroup $\begin{pmatrix} \SLdmZ & 0 \\ \Z^{d-1} & 1 \end{pmatrix}$ of $\Gamma_0$. 
\end{notation}

The projection $\Gamma \to P\Gamma$ restricted to $\gop$ is injective, moreover when $d$ is even it is an isomorphism from $\gop$ to $P\Gamma_0$. Thus, we can always see $\gop$ also as a subgroup of $P\Gamma$.  

\medskip 

A projected horosphere $\pi(\Hl_r(t))$ with $t\in \R$, is an immersed hypersurface in $\Ml_d$ (with isolated orbifold singularities) {\it{via}} the parametrization 
$$\Hl_r(t)/\Gamma_0 \to \pi(\Hl_r(t)),\; Q \cdot \Gamma_0 \mapsto \bar Q =
\pi(Q).$$

As mentioned previously, for $t<0$ with $|t|$ large enough, the above parametrization is an isometric embedding.
According to Remark \ref{rem:stabhoro}, $\Hl_r(0)/\Gamma_0$ can be identified with the double quotient
\begin{equation}
\Bl_0=\begin{pmatrix} \SOdm & 0 \\ 0 & 1 \end{pmatrix}\backslash \begin{pmatrix} \gdm & 0 \\ \R^{d-1} & 1 \end{pmatrix} / \gop
\end{equation}
when $d$ is even, and to $\Bl_0/\two $ when $d$ is odd. 

\begin{notation}\label{notatf0}
We denote by $\Fl_0(0)$ a fundamental domain for the action of $\gop $ on the horosphere $\Hl_r(0)$.  
\end{notation}

As explained in Section \ref{sec:Haar}, {\it{via}} the Iwasawa map $\hc_d$ the horosphere $\Hl_r(0)$
can be identified with the subgroup $N_d A' = A' N_d$ of the solvable group $S_d = N_d A_d$.

A fundamental domain of the $\gop$-action on $\Hl_r(0)$ can then be described in the Iwasawa coordinates as follows
\begin{equation} \label{eq:F00} 
\Fl_0(0) = \left\{ \begin{pmatrix} g' & 0 \\ z^\top & 1 \end{pmatrix} \mid g' \in \hc_{d-1}^{-1}(\Fl_{d-1}), z \in [-1/2,1/2)^{d-1} \right\}, 
\end{equation}
where $\Fl_{d-1} \subset \Pl_{d-1}$ is a fundamental domain of the $\SLdmZ$-right action on the symmetric space $\Pl_{d-1}$. The quotient $\Hl_r(0)/\gop $ can be seen as a fiber bundle over the locally symmetric space $\Ml_{d-1}$ with fiber $\R^{d-1} / \Z^{d-1}$, that is, a $(d-1)$-dimensional torus $\T^{d-1}$.  

Likewise, for every $t\in \R$, the quotient $\Hl_r(t)/\gop $ can be identified with a fundamental domain $\Fl_0(t)$ described in the Iwasawa coordinates by $a_{-t} \Fl_0(0)$.

The union 
\begin{equation} \label{eq:S}
\Sl = \bigcup_{t \in \R} \Fl_0(t)
\end{equation}
provides a description in the Iwasawa coordinates of a fundamental domain of the $\gop$-right action on $\Pl_d$.

Note that, when $d$ is even, in all the above, $\gop \simeq P\Gamma_0$ can be replaced by $\Gamma_0$. 

\medskip

\subsection{Siegel's volume formula}
The final aim of this section is to derive the following useful volume
formula. Its proof is based on classical arguments by C. L. Siegel in
\cite{Sie}. We present this proof and follow the expositions given in
\cite{Ga} and \cite{TP} (see also \cite[Section 4.4.4]{Terras}).

\begin{prop} \label{prop:siegel}
  We have
  $$ \alpha(d) \, \frac{\vol(\Hl_r(0)/\gop )}{\vol(\Ml_d)} = \alpha(d) \frac{\vol(\Fl_0(0))}{\vol(\Ml_d)} = \frac{\sqrt{d(d-1)}}{2} \frac{\omega_d}{\zeta(d)}, $$
  where $\omega_d$ is the volume of the $d$-dimensional Euclidean unit ball, 
$\alpha(d)$ is defined in \eqref{eq:alpha}, and $\zeta$ is the Riemann zeta function.
\end{prop}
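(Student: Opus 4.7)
The plan is to follow Siegel's classical strategy and compute one integral over $\Ml_d$ in two different ways. For a non-negative continuous test function $f:(0,\infty)\to\R$ of compact support, I form the $\Gamma$-invariant function
\begin{equation*}
F(Q)\;=\;\sum_{v\in \widehat{\Z}^d} f(Q(v))
\end{equation*}
which descends to $\Ml_d$, and evaluate $\int_{\Ml_d}F\,d\vol_{\Ml_d}$ in two ways.

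First, Siegel's mean value formula for primitive vectors (obtained from the classical $\SLdR/\SLdZ$-formula by M\"obius inversion, producing the factor $1/\zeta(d)$) states, with the normalizations of Haar measure compatible with $d\vol_{\Ml_d}$,
\begin{equation*}
\int_{\Ml_d}F\,d\vol_{\Ml_d}\;=\;\frac{\vol(\Ml_d)}{\zeta(d)}\int_{\R^d}f(|x|^2)\,dx\;=\;\frac{\vol(\Ml_d)}{\zeta(d)}\cdot\frac{d\,\omega_d}{2}\int_0^\infty s^{d/2-1}f(s)\,ds,
\end{equation*}
the last equality being a polar-coordinate computation on $\R^d$.

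Second, using the bijection $\Gamma/\gop\cong \widehat{\Z}^d$ given by $\gamma\gop\mapsto \gamma e_d$, I unfold the lattice sum. In even dimension $-\Id\in\Gamma$ acts trivially on $\Pl_d$ but nontrivially on $\widehat{\Z}^d$, doubling the effective contribution relative to a fundamental domain for $\gop$; this accounts for the factor $\alpha(d)$ in
\begin{equation*}
\int_{\Ml_d}F\,d\vol_{\Ml_d}\;=\;\alpha(d)\int_{\Pl_d/\gop} f(Q(e_d))\,d\vol_{\Pl_d}.
\end{equation*}
Since $Q(e_d)=e^{t\sqrt{(d-1)/d}}$ is constant on each horosphere $\Hl_r(t)$ (by \eqref{eq:horo}) and $\gop$ preserves every horosphere with fundamental domain $\Phi_t(\Fl_0(0))$, Lemma \ref{lem:horovolrel}, equation \eqref{eq:fubinihor}, together with the substitution $s=e^{t\sqrt{(d-1)/d}}$, yields
\begin{equation*}
\int_{\Pl_d/\gop}f(Q(e_d))\,d\vol_{\Pl_d}\;=\;\vol(\Fl_0(0))\int_{\R} e^{\frac{t\sqrt{d(d-1)}}{2}} f\bigl(e^{t\sqrt{(d-1)/d}}\bigr)\,dt\;=\;\frac{\vol(\Fl_0(0))}{\sqrt{(d-1)/d}}\int_0^\infty s^{d/2-1}f(s)\,ds,
\end{equation*}
where I used $\sqrt{d(d-1)}/(2\sqrt{(d-1)/d})=d/2$.

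Equating the two expressions for $\int_{\Ml_d}F\,d\vol_{\Ml_d}$ and cancelling the common $f$-dependent integral (valid because $f$ is arbitrary), then simplifying via $d\cdot\sqrt{(d-1)/d}=\sqrt{d(d-1)}$, produces the claimed identity. The principal difficulty is the careful bookkeeping of the factor $\alpha(d)$, distinguishing even from odd dimension through the (non)triviality of $-\Id\in\SLdZ$ on $\Pl_d$, together with pinning down the Haar-measure normalization that makes the Siegel constant equal exactly $\vol(\Ml_d)/\zeta(d)$; both points can be verified by testing against an explicit function such as the indicator of a Euclidean ball and comparing with the known volume of $\Ml_d$ in low dimensions (e.g.\ $d=2$, cf.\ Example \ref{ex:hypplane1}).
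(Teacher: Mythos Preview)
Your argument is correct, but it takes a genuinely different route from the paper. The paper works with the \emph{full} lattice sum $S_f(g)=\sum_{x\in\Z^d}f(gx)$ for a radial Schwartz function $f$ on $\R^d$, performs the same horospherical unfolding you do (producing the factor $\alpha(d)\vol(\Fl_0(0))$ and a $\zeta(d)$ from the sum over multiples of primitive vectors), and then invokes the \emph{Poisson summation formula} to obtain the symmetry $\int_{\Ml_d}S_f=\int_{\Ml_d}S_{\widehat f}$. Choosing $f$ with $f(0)=0$ and $\widehat f(0)=1$ then isolates the volume ratio. In other words, the paper essentially \emph{reproves} Siegel's mean value theorem along the way, making the argument self-contained modulo Poisson summation.

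By contrast, you take Siegel's mean value formula (in its primitive-vector form, with constant $1/\zeta(d)$) as a black box and equate it with the unfolded horospherical integral. This is shorter and perfectly legitimate provided one is willing to cite Siegel, but it does import the nontrivial fact that the Siegel constant equals $1$, whose standard proof is precisely the Poisson-summation computation the paper carries out. Your closing remark that the normalization ``can be verified by testing against an explicit function'' in low dimension is not really a proof of this constant; it is a consistency check. So your approach buys brevity at the cost of self-containment, while the paper's approach avoids any external appeal beyond Poisson summation. Both the $\alpha(d)$ bookkeeping and the horospherical change of variables in your write-up are handled correctly.
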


Before we start the proof, let us introduce the following notation.

\begin{notation} The set of all \emph{primitive lattice points} in $\Z^d$ will henceforth be denoted by ${\widehat \Z}^d$, i.e.,
$$ {\widehat \Z}^d = \{ x = (x_1,\dots,x_d) \in \Z^d\backslash \{ 0 \} \mid \gcd(x_1,\dots,x_d) = 1 \}. $$
\end{notation}

The set ${\widehat \Z}^d$ can be identified with the quotient $\Gamma / \gop $ {\it{via}} the bijective map
$$ \Gamma / \gop \to {\widehat \Z}^d, \quad \gamma \gop \mapsto \gamma e_d. $$

\begin{proof}[Proof of Proposition \ref{prop:siegel}]
  Let $f \in C^\infty(\R^d)$ be a radial Schwartz function, that is,
  \begin{equation}\label{eq:radial}
 f(k x) = f(x)\mbox{ for all }x \in \R^d\mbox{ and }k \in K_d = \SOd \, . 
  \end{equation}
 We define
  a function $S_f: G \to \R$ {\it{via}}
  $$ S_f(g) = \sum_{x \in \Z^d} f(gx) = f(0) + \sum_{x \in \Z^d \backslash \{ 0 \}} f(gx). $$
  Note that $S_f$ is left $\SOd$-invariant and right $\Gamma$-invariant, so it can be viewed as a function on $\Ml_d = \Pl_d / \Gamma$
  and the integral $\int_{\Ml_d} S_f\, d\vol_{\Ml_d}$ is well-defined. 
  Our first aim is to rewrite this integral. We have that
  $$ \Z^d \backslash \{ 0 \} = \bigcup_{\ell \in \N} \ell\, {\widehat \Z }^d
  = \bigcup_{\ell \in \N} \bigcup_{\gamma \in \Gamma / \gop }
   \{ \ell \gamma e_d \}, $$
   and, therefore,
$$
 \int_{\Ml_d} S_f\, d\vol_{\Ml_d} = \int_{\Ml_d} f(0) d\vol_{\Ml_d} +\sum_{\ell \in \N} \sum_{\gamma \in \Gamma/\gop } \int_{\Pl_d/P\Gamma } f(\ell g \gamma e_d) d\vol_{\Ml_d}.
$$

When $d$ is odd, we have $P\Gamma =\Gamma$, while when $d$ is even $P\Gamma =\{ \pm \Id_d\} \backslash \Gamma$ and \eqref{eq:radial} implies that $f(-x) = f(x)$. It follows that 
\begin{eqnarray}
\int_{\Ml_d} S_f\, d\vol_{\Ml_d} &=& \int_{\Ml_d} f(0) d\vol_{\Ml_d} + \alpha(d)\sum_{\ell \in \N} \sum_{\gamma \in P\Gamma/\gop } \int_{\Pl_d/P\Gamma } f(\ell g \gamma e_d) d\vol_{\Ml_d} \nonumber \\
&=& f(0) \vol(\Ml_d) + \alpha(d)\sum_{\ell \in \N} \sum_{\gamma \in P\Gamma/\gop } \int_{\Fl_d } f(\ell g \gamma e_d) d\vol_{\Pl_d} \nonumber
\\
   &=& f(0) \vol(\Ml_d) + \alpha(d) \sum_{\ell \in \N} \int_{\Pl_d/\gop } 
   \Psi_\ell (g) \, d\vol_{\Pl_d/\gop } \label{eq:intsum}
   \end{eqnarray}
   with left $\SOd$-invariant and right $\gop$-invariant
   functions $\Psi_\ell: G \to \R$, $\Psi_\ell(g) = f(\ell ge_d)$, viewed as functions on $\Pl_d/\gop$. 
      
In the integral appearing in \eqref{eq:intsum} we replace $\Pl_d/\gop$ by the fundamental domain $\Sl \subset \Pl_d$ from \eqref{eq:S}. 
   $$ \int_{\Pl_d / \gop } \Psi_\ell(g) \, d\vol_{\Pl_d / \gop } =
   \int_\Sl \Psi_\ell(g) \, d\vol_{\Sl} = \int_\R \int_{\Fl_0(t)} \Psi_\ell(g) \, d\vol_{\Hl_r(t)}\, dt. $$ 
   The function $\Psi_\ell$ is constant on $\Hl_r(t)$. Indeed, $\Hl_r(t)$ can be identified with $a_{-t}A'N$ {\it{via}} $\hc_d: S_d \to \Pl_d$ and,
   since for $a'n \in A'N$, $a' n e_d = e_d$,
   $$ \Psi_\ell(a_{-t}a'n e_d) = f(\ell a_{-t} e_d) = f(\ell e^{\mu t/2} e_d).$$
   Using $\Fl_0(t) = \Phi_t(\Fl_0(0))$ and \eqref{eq:volhormod}, we obtain
   \begin{eqnarray*} 
   \int_{\Pl_d / \gop } \Psi_\ell (g)\, d\vol_{\Pl_d / \gop } &=&
   \int_\R f(\ell e^{\mu t/2} e_d) \vol(\Fl_0(t)) dt \\
   &=& \vol(\Fl_0(0)) \int_\R  f(\ell e^{\frac{t}{2} \sqrt{\frac{d-1}{d}}} e_d) e^{\frac{t}{2} \sqrt{(d-1)d}} dt \\
   &=& 2\, \vol(\Fl_0(0)) \sqrt{\frac{d}{d-1}} \int_0^\infty \left( \frac{r}{\ell} \right)^d f(r e_d) \frac{dr}{r}
   \end{eqnarray*}
   {\it{via}} the substitution $r = \ell e^{\frac{t}{2} \sqrt{\frac{d-1}{d}}}$.
  The function $f$ is radial, therefore,
   \begin{equation*}
   \int_{\R^d} f\, d\vol_{\R^d} = \int_0^\infty \int_{S_r(0)} f\, d\vol_{S_r(0)}\, dr = \vol_{d-1}(S_1(0)) \int_0^\infty r^{d-1} f(r e_d) dr,
   \end{equation*}
   where $S_r(0)$ denotes the $(n-1)$-dimensional Euclidean sphere of radius $r$. Combining the two previous identities and using $\omega_d = \vol_{\R^d}(B_1(0)) = \frac{1}{d} \vol_{d-1}(S_1(0))$ leads to
   $$ \int_{\Pl_d / \gop } \Psi_\ell (g)\, d\vol_{\Pl_d / \gop } = \frac{1}{\ell^d} \frac{2\, \vol(\Fl_0(0))}{d\, \omega_d} \sqrt{\frac{d}{d-1}} \int_{\R^d} f d\vol_{\R^d}. $$
    We plug this into \eqref{eq:intsum} and write the integral
    $\int_{\R^d} f d\vol_{\R^d}$ as the value $\widehat f(0)$ of the Fourier transform
    $$ \widehat f(\xi) = \int_{\R^d} f(x) e^{-2\pi i \langle x,\xi \rangle} dx. $$
    We obtain that
    \begin{equation} \label{eq:crucintident} 
    \int_{\Ml_d} S_f\, d\vol_{\Ml_d} = f(0)\vol(\Ml_d) + \alpha(d) \,\frac{2\, \vol(\Fl_0(0))}{\omega_d \, \sqrt{(d-1)d}} \widehat f(0) 
    \zeta(d). 
    \end{equation}
    Now we use the Poisson summation formula, namely,
    $$ \sum_{x \in \Lambda} f(x) = \frac{1}{\covol(\Lambda)}
    \sum_{\xi \in \Lambda^*} \widehat f(\xi), $$
    for any Schwartz function
    $f \in C^\infty(\R^d)$, any lattice $\Lambda \subset \R^d$ and dual lattice
    $$ \Lambda^* = \{ \xi \in \R^d \mid \langle x,\xi \rangle
    \in \Z , \quad \forall\, x \in \Lambda \}. $$
    We obtain
    $$ S_f(g) = \sum_{x \in g \Z^d} f(x) = \sum_{\xi \in (g^{-1})^\top
    \Z^d} \widehat f(\xi) = S_{\widehat f}((g^{-1})^\top). $$
    The automorphism $G \ni g \mapsto (g^{-1})^\top \in G$ preserves the measure $dg$ and preserves $\Gamma$ as a set and, therefore, we have
     \begin{equation} \label{eq:poisumcons} 
    \int_{G/\Gamma} S_f(g)\, dg = \int_{G/\Gamma} S_{\widehat f}((g^{-1})^\top)\, dg = \int_{G/\Gamma} S_{\widehat f}(g)\, dg. 
    \end{equation}
    We chose $f \in C^\infty(\R^d)$ to be a \emph{radial} Schwartz function, hence $\widehat f \in C^\infty(\R^d)$ is a
    \emph{radial} Schwartz function and \eqref{eq:poisumcons} descends to the integral identity
    $$ \int_{\Pl_d/\Gamma} S_f(g)\, dg = \int_{\Pl_d/\Gamma} S_{\widehat f}(g)\, dg. $$
   This identity and
    \eqref{eq:crucintident} imply that
    \begin{equation} \label{eq:crucrelfhatf}
    f(0)\vol(\Ml_d) + \alpha(d)\, \frac{2\, \vol(\Fl_0(0))}{\omega_d \, \sqrt{(d-1)d}} \widehat f(0) \zeta(d) 
    = \widehat f(0)\vol(\Ml_d) + \alpha(d)\, \frac{2\, \vol(\Fl_0(0))}{\omega_d \, \sqrt{(d-1)d}} f(0) \zeta(d),
    \end{equation}
    since we have for radial functions $\hat{\hat{f}} = f$,
    by the Fourier inversion formula
    $$ f(x) = \int_{\R^d} \widehat f(\xi) e^{2\pi i \langle x,\xi \rangle} d\xi. $$
    Since we can easily find
    a radial Schwartz function $f \in C^\infty(\R^d)$ with $f(0) = 0$
    and
    $$ \widehat f(0) = \int_{\R^d} f\, d\vol_{\R^d} = 1, $$
    the identity \eqref{eq:crucrelfhatf} simplifies in this case to
    $$ \alpha(d) \, \frac{2\, \vol(\Fl_0(0))}{\omega_d \, \sqrt{(d-1)d}} \zeta(d) =
    \vol(\Ml_d), $$
    which finishes the proof of the proposition.
\end{proof}  
 
\begin{example}
  For $d=2$, the symmetric
  space $\Pl_2$ is isometric to the rescaled hyperbolic plane $(\H^2,\sqrt{2} \dist_{\H})$, $\Fl_0(0)$ can be identified with the line segment $i + [-1/2,1/2) \subset \H^2$ and a fundamental domain for $\SLZ$ is  
  $$ \Fl_2= \left\{ z \in \C \mid - \frac{1}{2} \le {\rm{Re}}(z) < \frac{1}{2}, |z| \ge 1 \right\} \subset \H^2. $$
 Taking the rescaling factor into account, we have
  $$ \vol(\Fl_0(0)) = \sqrt{2} \quad \text{and} \quad
  \vol(\Fl_2)= \vol(\Ml_2) = \frac{2 \pi}{3}. $$
Therefore, for $d=2$,
  $$ 2 \frac{\sqrt{2}}{2 \pi/3} = \alpha(d)\, \frac{\vol(\Fl_0(0))}{\vol(\Ml_d)} = \frac{\sqrt{d(d-1)}}{2} \frac{\omega_d}{\zeta(d)} = \frac{\sqrt{2}}{2} \frac{\pi}{\pi^2/6}, $$
  which confirms the statement of Proposition \ref{prop:siegel}.
\end{example} 

A consequence of Proposition \ref{prop:siegel} that will be useful in future arguments is the following.

\begin{prop} \label{prop:siegelhorob}
For every $\tau >0$ large enough we have
  $$\frac{\vol\left( \Hlb_{\bar \rho }(-\tau ) \right)}{\vol(\Ml_d)} \leq \frac{1}{\alpha(d)}\frac{\omega_d}{\zeta(d)} e^{-\frac{\tau }{2}\sqrt{d(d-1)}}, $$
  where $\omega_d$ is the volume of the $d$-dimensional Euclidean unit ball, $\alpha(d)$ is defined in \eqref{eq:alpha}, and $\zeta$ is the Riemann zeta function.
\end{prop}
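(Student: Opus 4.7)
The plan is to combine Fubini's theorem with the analogue for $\rho$ of Proposition \ref{prop:siegel}. Write $\Gamma_0^\rho = \Gamma \cap \mathrm{Stab}_G(\Hl_\rho(0))$ and let $\Fl_0^\rho(s) \subset \Hl_\rho(s)$ be a fundamental domain for the right $\Gamma_0^\rho$-action on the horosphere $\Hl_\rho(s)$. First I would observe that for $\tau$ large enough the distinct $\Gamma$-translates of $\Hlb_\rho(-\tau)$ become pairwise disjoint inside $\Pl_d$, so the natural map $\Hlb_\rho(-\tau)/\Gamma_0^\rho \to \Hlb_{\bar\rho}(-\tau)$ is an isometric bijection up to a measure-zero orbifold locus; in general one always has
\begin{equation*}
\vol\bigl(\Hlb_{\bar\rho}(-\tau)\bigr) \le \vol\bigl(\Hlb_\rho(-\tau)/\Gamma_0^\rho\bigr) = \int_{-\infty}^{-\tau} \vol\bigl(\Fl_0^\rho(s)\bigr)\, ds,
\end{equation*}
the last step by Fubini applied to the foliation of $\Pl_d$ by the horospheres $\Hl_\rho(s)$.

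Invoking the $\rho$-version of Lemma \ref{lem:horovolrel} (explicitly recorded at the end of that lemma) yields $\vol(\Fl_0^\rho(s)) = e^{\frac{s}{2}\sqrt{(d-1)d}} \vol(\Fl_0^\rho(0))$, so the integral evaluates to $\frac{2\vol(\Fl_0^\rho(0))}{\sqrt{(d-1)d}} e^{-\frac{\tau}{2}\sqrt{(d-1)d}}$. The remaining task is to identify $\vol(\Fl_0^\rho(0))$, and for this I would introduce the involution $\sigma: G \to G$ defined by $\sigma(g) = J(g^\top)^{-1} J^{-1}$, where $J$ is (up to a sign on one entry) the antidiagonal permutation matrix. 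The map $\sigma$ is a group automorphism preserving $\SOd$, $\Gamma$ and the Haar measure, and it is an isometry of $(\Pl_d, \dist_{\Pl_d})$ because it preserves the Killing form on $\mathfrak{sl}_d$. A direct computation gives $\sigma(a_t) = \tilde a_t$ and, using that the matrix of $\sigma(Q)$ is $J M_Q^{-1} J$ together with the Cramer-rule identity $(M_Q^{-1})_{11} = \det Q^{\perp e_1}$, one checks $f_r \circ \sigma = f_\rho$. Hence $\sigma$ descends to an isometry of $\Ml_d$ interchanging the $r$- and $\rho$-cusp data, giving in particular $\vol(\Fl_0^\rho(0)) = \vol(\Fl_0(0))$. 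Proposition \ref{prop:siegel} then supplies $\vol(\Fl_0^\rho(0)) = \frac{1}{\alpha(d)}\,\frac{\sqrt{(d-1)d}}{2}\,\frac{\omega_d}{\zeta(d)}\,\vol(\Ml_d)$.

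Substituting this into the integrated bound, the factors of $\sqrt{(d-1)d}/2$ cancel and the claim
\begin{equation*}
\frac{\vol(\Hlb_{\bar\rho}(-\tau))}{\vol(\Ml_d)} \le \frac{1}{\alpha(d)}\,\frac{\omega_d}{\zeta(d)}\, e^{-\frac{\tau}{2}\sqrt{d(d-1)}}
\end{equation*}
drops out. I expect the most delicate step to be the symmetry argument: one must verify carefully that $\sigma$ descends to a well-defined volume-preserving isometry of $\Ml_d$ that sends the $r$-cusp to the $\rho$-cusp. If one prefers to avoid $\sigma$, an alternative is to rerun the proof of Proposition \ref{prop:siegel} with $\rho$ and $e_1$ replacing $r$ and $e_d$, at the price of re-identifying the cusp stabilizer quotient $\Gamma/\Gamma_0^\rho$ with the set of primitive rows of $\SLdZ$-matrices (instead of columns), which again produces the Dirichlet series $\zeta(d)$ via the same Poisson-summation step as in the original proof.
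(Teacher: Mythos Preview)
Your proposal is correct and follows essentially the same route as the paper: bound $\vol(\Hlb_{\bar\rho}(-\tau))$ by the volume of a fundamental domain of the horoball modulo its $\Gamma$-stabiliser, integrate the horosphere volumes via the $\rho$-version of Lemma~\ref{lem:horovolrel}, and transfer Proposition~\ref{prop:siegel} from $r$ to $\rho$ by an isometry. The only difference is cosmetic: the paper justifies the last step in one line (``$\rho$ is obtained from the ray opposite to $r$ by applying an element of the finite Weyl group, hence is the image of $r$ by an isometry''), whereas you write the isometry down explicitly as $\sigma(g)=J(g^{\top})^{-1}J^{-1}$ and verify $f_r\circ\sigma=f_\rho$ via the cofactor identity --- this is exactly the composition of the Cartan involution with the Weyl element the paper invokes, so the two arguments are the same once unwound.
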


\proof The map $\Hlb_{ \rho }(-\tau )/ \gop \to \Hlb_{\bar \rho }(-\tau )$ is onto and it contracts distances and volumes, therefore it suffices to bound $\frac{\vol\left(\Hlb_{ \rho }(-\tau )/ \gop \right)}{\vol(\Ml_d)}$.
We can write, using Lemma \ref{lem:horovolrel}, that  
\begin{multline}  
\vol\left( \Hlb_{ \rho }(-\tau )/ \gop \right)= \int_\tau^\infty e^{-\frac{t}{2}\sqrt{d(d-1)}} \vol \left(\Hl_{ \rho }(0 )/ \gop \right)\, dt
= \\ \frac{2}{\sqrt{d(d-1)}} 
e^{-\frac{\tau }{2}\sqrt{d(d-1)}} \vol \left(\Hl_{ \rho }(0 )/ \gop \right) = e^{-\frac{\tau }{2}\sqrt{d(d-1)}}\frac{1}{\alpha(d)}\frac{\omega_d}{\zeta(d)} \vol(\Ml_d), 
\end{multline}
where the last equality follows from Proposition \ref{prop:siegel}. Note that, as $\rho$ is obtained from the ray opposite to $r$ by applying a permutation, that is by applying an element of the finite Weyl group, $\rho$ is the image of $r$ by an isometry, therefore Proposition \ref{prop:siegel} is also valid for $r$ replaced by $\rho$.\endproof

\section{Lattice point counting in ellipsoids}
\label{sec:latcount}

In this section we provide a brief survey of results on counting of lattice points in $\Z^d$, $d \ge 2$, contained in dilations of $d$-dimensional convex bodies.

\subsection{General case of convex bodies}
Let $\Bl^d \subset \R^d$ be a convex body with smooth boundary with bounded, nowhere vanishing Gaussian curvature.

Given
$$ N_0(\Bl^d,R) = \# (R \Bl^d \cap \Z^d) = \vol(\Bl^d) R^d + E_0(\Bl^d,R) ,$$
we are interested in the asymptotics of the error term $E_0(\Bl^d,R)$. The best estimate for $d \ge 3$, under these general assumptions on $\Bl^d$, is due to
J. Guo \cite{Guo} (improving earlier results by  W. M\"uller \cite{Mu}) and it states that
\begin{equation} \label{eq:Bod34d} 
E_0(\Bl^d,R) = \begin{cases} O(R^{231/158}) & \text{if $d=3$,} \\
O(R^{61/26}) &\text{if $d=4$,} \\ O\left( R^{d-2+\frac{d^2+3d+8}{d^3+d^2+5d+4}} \right) & \text{if $d \ge 5$.} \end{cases} 
\end{equation}
The best known bound in dimension $d=2$, due to Huxley \cite{Hux}, is that $E_0(\Bl^2,R) = O(R^{\frac{131}{208}}(\log R)^{\frac{18627}{8320}})$.

\subsection{Case of ellipsoids}
In this paper, we are mainly interested in the case where $\Bl^d$ is a $d$-dimensional ellipsoid $\El^d$, centered at the origin. Such an ellipsoid $\El^d$ can be written as $Q^{-1}([0,1])$, where $Q: \R^d \to [0,\infty)$ is a positive definite quadratic form defined {\it{via}} a symmetric $d \times d$ matrix $M_Q$. We call the ellipsoid $\El^d$ {\em rational} if there exists a factor $\lambda > 0$ such that all the entries of $\lambda M_Q$ are integers, otherwise we call $\El^d$ {\em irrational}. 

Bentkus and G\"otze \cite{BG97,BG99} proved that for all $d \ge 9$,
$E_0(\El^d,R) = O(R^{d-2})$, and
$E_0(\El^d,R) = o(R^{d-2})$ if $\El^d$ is irrational. The former estimate is optimal for rational ellipsoids since by \cite[page 17]{IKKN} $E_0(\El^d,R) = \Omega(R^{d-2})$ for all rational ellipsoids of dimension $d \ge 3$ \footnote{For an arbitrary function $f$ and a positive function $g$, we write $f(R) = \Omega_+(g(R))$ if $\limsup_{R \to \infty} f(R)/g(R) > 0$ and $f(R) = \Omega_-(g(R))$ if $\liminf_{R \to \infty} f(R)/g(R) < 0$.  $f(R) = \Omega(g(R))$ means that at least one of these two asymptotics holds. This means also that $f(R) \neq o(g(R))$.}. In \cite{G04}, G\"otze extended the above mentioned estimate to $d \ge 5$. More precisely, given $\Lambda \ge \lambda > 0$ maximal and minimal eigenvalues of 
$M_Q$, G\"otze proved that
\begin{equation} \label{eq:Goetze}
|E_0(\El^d,R)| \le C(d) \left( \frac{\Lambda}{\lambda} \right)^{d+1} 
\left( \frac{1}{\lambda} \right)^{\frac{d-2}{2}} \left( 1 + \log \frac{\Lambda}{\lambda} \right) R^{d-2}, 
\end{equation}
for all $R \ge 2 \sqrt{\lambda}$, with constants $C(d) > 0$ only depending on $d$. 

The best error term estimate for the $3$-dimensional unit ball $\Dl^3$ centered at the origin, due to Heath-Brown \cite{HB}, is that for every $\varepsilon >0$
$$ E_0(\Dl^3,R) = O(R^{21/16+\varepsilon}).$$
Chamizo, Crist{\'o}bal and Ubis \cite{CHCU09} extended the above error estimate to all rational $3$-dimensio\-nal ellipsoids. 
The best existing estimate from below of the error term for $3$-dimensional rational ellipsoids \cite{LTs02} is $E_0(\El^3,R) = \Omega_\pm(R(\log R)^{1/2})$.  

\subsection{Connection with counting of primitive lattice points}

For our purposes, we need corresponding results for the error terms of the counting of primitive lattice points $\widehat \Z^d$.
We define
$$ N_1(\Bl^d,R) = \# (R \Bl^d \cap \widehat \Z^d) = \frac{\vol(\Bl^d)}{\zeta(d)} R^d + E_1(\Bl^d,R), $$
for an arbitrary body $\Bl^d \subset \R^d$. The following fact is well-known.

\begin{prop} \label{prop:latttoprimlatt} Let $\Bl^d$ be a starlike body in $\R^d$, ith $d \ge 3$, and let $E_j(\Bl^d,R)$, $j=0,1$, be the the error terms of the corresponding full and primitive lattice point counting problems, respectively. Let $1 < \alpha < d$, $\beta \ge 0$ and $j \in  \{0,1\}$. Then an estimate of the form
\begin{equation} \label{eq:Ejest} 
|E_j(\Bl^d,R)| \le C_j R^\alpha (\log R)^\beta \qquad \text{for all $R \ge R_0 \ge 2$} 
\end{equation}
leads to a corresponding result
\begin{equation} \label{eq:E1-jest}
|E_{1-j}(\Bl^d,R)| \le C_{1-j} R^\alpha (\log R)^\beta \qquad \text{for all $R \ge R_0$}, 
\end{equation}
where $C_{1-j}$ depends only on $\alpha$, $\beta$, $d$, $\vol(\Bl^d)$ and $C_j$. 
Similarly, we have for any $1 < \alpha \le d$,
\begin{equation} \label{eq:Ejo-est} 
E_0(\Bl^d,R) = o(R^\alpha) \quad \Longleftrightarrow \quad E_1(\Bl^d,R) = o(R^\alpha). 
\end{equation}
\end{prop}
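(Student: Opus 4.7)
The argument is based on a Möbius inversion between the two counting functions. Since $\Bl^d$ is starlike with respect to the origin, $\ell y \in R \Bl^d$ is equivalent to $y \in (R/\ell)\Bl^d$ for every $\ell \in \N$. Each nonzero $x \in \Z^d \cap R\Bl^d$ factors uniquely as $x = \ell y$ with $\ell \in \N$ and $y \in \widehat{\Z}^d$, so
$$ N_0(\Bl^d,R) - 1 = \sum_{\ell=1}^{\ell_0} N_1(\Bl^d,R/\ell), \qquad N_1(\Bl^d,R) = \sum_{\ell=1}^{\ell_0} \mu(\ell)\,\bigl(N_0(\Bl^d,R/\ell) - 1\bigr), $$
where $\ell_0 = \ell_0(R) = O(R)$ is the largest $\ell$ for which $(R/\ell)\Bl^d$ meets $\widehat{\Z}^d$ (the bound $\ell_0 = O(R)$ uses that $\Bl^d$ is bounded). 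These two identities are the tool for passing between $E_0$ and $E_1$ in either direction.

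After substituting the asymptotic expansion of $N_j(\Bl^d,R/\ell)$ into the relevant identity, the main term is recovered from $\sum_{\ell \ge 1} \ell^{-d} = \zeta(d)$ or its Möbius dual $\sum_{\ell \ge 1} \mu(\ell)\ell^{-d} = 1/\zeta(d)$. Truncating the series at $\ell_0$ produces a tail of order $O(\ell_0^{1-d})$ which, multiplied by $\vol(\Bl^d) R^d$, contributes $O(R)$; the trivial bound $\bigl|\sum_{\ell \le \ell_0}\mu(\ell)\bigr| = O(R)$ contributes a further $O(R)$ term. Both are absorbed into $O(R^\alpha (\log R)^\beta)$ because $\alpha > 1$.

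It remains to control the sum of error contributions $\sum_{\ell \le \ell_0} |E_j(\Bl^d,R/\ell)|$, which I handle by splitting at $\ell = R/R_0$. For $\ell \le R/R_0$ the hypothesis \eqref{eq:Ejest} gives
$$ \sum_{\ell \le R/R_0} |E_j(\Bl^d,R/\ell)| \le C_j\, R^\alpha (\log R)^\beta \sum_{\ell=1}^{\infty} \ell^{-\alpha}, $$
a convergent series thanks to $\alpha > 1$. For $R/R_0 < \ell \le \ell_0$ the quantity $N_0(\Bl^d,R/\ell)$ is bounded by $N_0(\Bl^d,R_0)$, a constant depending only on $\Bl^d$ and $R_0$; hence each $|E_j|$ is $O(1)$ and the $O(R)$ such terms contribute a total of $O(R) \le O(R^\alpha (\log R)^\beta)$. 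Combining these pieces yields \eqref{eq:E1-jest} with a constant $C_{1-j}$ depending only on $\alpha$, $\beta$, $d$, $\vol(\Bl^d)$ and $C_j$.

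The equivalence \eqref{eq:Ejo-est} follows from the same identities by an $\varepsilon$-split. Given $\varepsilon > 0$, choose $R_\varepsilon$ so that $|E_j(\Bl^d,R')| \le \varepsilon (R')^\alpha$ for all $R' \ge R_\varepsilon$; then the range $\ell \le R/R_\varepsilon$ contributes at most $\varepsilon\, \zeta(\alpha)\, R^\alpha$ in absolute value, while the remaining $O(R)$ terms are individually $O_{R_\varepsilon}(1)$ and together contribute $O(R) = o(R^\alpha)$ since $\alpha > 1$. Adding the $O(R)$ auxiliary errors from the truncated Möbius/zeta sums and letting $\varepsilon \to 0$ yields $E_{1-j}(\Bl^d,R) = o(R^\alpha)$. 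The only mildly delicate point is the bookkeeping of these auxiliary $O(R)$-errors coming from series truncation; all of them are harmless precisely because $\alpha > 1$, which is why the hypothesis $\alpha > 1$ cannot be dispensed with.
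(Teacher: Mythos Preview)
Your argument is correct and follows essentially the same M\"obius-inversion route as the paper: both derive the identities $N_0(R)=\sum_{k}N_1(R/k)$ and $N_1(R)=\sum_{k}\mu(k)N_0(R/k)$, substitute the asymptotic expansions, and bound the resulting error sum $\sum_k |E_j(\Bl^d,R/k)|$ using $\alpha>1$ to ensure convergence of $\sum k^{-\alpha}$, together with an $O(R)$ tail from truncating $\zeta(d)$ or $1/\zeta(d)$. Your treatment is in fact slightly more careful than the paper's, since you explicitly split off the range $R/\ell<R_0$ where the hypothesis \eqref{eq:Ejest} is not assumed; the only small slip is that your bound $N_0(\Bl^d,R_0)$ on that range depends on the body $\Bl^d$ and on $R_0$, so the final constant $C_{1-j}$ you actually obtain does not depend solely on $\alpha,\beta,d,\vol(\Bl^d),C_j$ as you claim in the last line---but this discrepancy is cosmetic and does not affect the substance of the argument.
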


For the sake of completeness, we provide a short proof of this result in Appendix \ref{app:primlattriem}.

\subsection{A mean square bound for primitive lattice points in ellipsoids}
We recall an inequality due to W. Schmidt \cite[p. 518]{Schm60}, based on Rogers' second moment formula \cite{Rog}. The version reproduced here is from Kelmer-Yu \cite[formula (0.2)]{KY21}. 

Given a lattice $\Lambda$ and a Borel set
$\Bl$ in $\R^d$, let $\Lambda_{\rm{pr}}$ be the subset of
primitive vectors of $\Lambda$ and let
$$ D(\Lambda,\Bl) = \left| \frac{\zeta(d) \# (\Lambda_{\rm{pr}} \cap \Bl)}{\vol(\Bl)} - 1 \right|. $$

Schmidt's inequality states that for $d \ge 3$
\begin{equation}\label{eq:schmidt}
\int_{\gd / \gad} D(\Lambda,\Bl)^2 d\nu_d(\Lambda) \le 
\frac{2\zeta(d)}{\vol(\Bl)},
\end{equation}

where $\nu_d$ is the unique bi-invariant probability measure on $\gd / \gad$.

\smallskip

Using the identity
$$
N_1(\El_{Q_0 \cdot g},R) = \# \left\{ v \in \widehat \Z^d \vert (Q_0 \cdot g)(v) \le R^2 \right\} = \# (g \widehat \Z^d \cap B_R(0)),
$$
where $B_R(0)$ denotes the $d$-dimensional ball of radius $R$ centered at the origin, we obtain
$$ | E_1(\El_{Q_0\cdot g},R) | = \left| N_1(\El_{Q_0\cdot g},R) - \frac{\vol(\El_{Q_0\cdot g})}{\zeta(d)} R^d \right| = \frac{\vol(B_R(0))}{\zeta(d)} D(g \Z^d,B_R(0)).  $$
This and the $\SOd$-invariance allows to conclude that
\begin{multline} \label{eq:meansqbd}
\oint_{\Ml_d} | E_1(\El_Q,R) |^2 d\vol_{\Ml_d}(\bar Q) = 
\frac{\vol(B_R(0))^2}{\zeta(d)^2} \int_{\gd / \gad} |D(\Lambda,B_R(0))|^2 d\nu(\Lambda) \le
\frac{2 \omega_d}{\zeta(d)} \cdot R^d. 
\end{multline}

In the case $d=2$ the second moment formula provided by Rogers in \cite[Theorem 5]{Rog} is incorrect, as explained in \cite[page 61]{Fairchild}. This problem was addressed by Schmidt in \cite{Schm60}, from Section 6 onwards, where he provided a corrected version of the formula. The version reproduced here follows from Kelmer-Yu \cite[Theorem 1]{KY21} for $n=1$, who state that this particular case is a consequence of \cite[Proposition 2.10]{KM12}. Indeed, Theorem 1 in \cite{KY21} applied to the characteristic function of a Borel set in $\R^2$ implies inequality \eqref{eq:schmidt} for $d=2$, with the right hand side $\frac{4\zeta(2)}{\vol(\Bl)}$. Therefore \eqref{eq:meansqbd} is also true for $d=2$, with the right hand side $\frac{4 \omega_2}{\zeta(2)} \cdot R^2$.

The above mean square bounds play a central part in the proof of the equidistribution results given in Section \ref{sec:effeqdisthorospheres}.

\section{$\Gamma$-orbit counting in the symmetric space $\Pl_d$}
\label{sec:orbitcounting}

Our first goal is to translate the counting of primitive lattice points
$\widehat \Z^d$ in dilations of $d$-dimensional ellipsoids into counting of $\Gamma$-orbits in increasing subsets of the
symmetric space $\Pl_d$. As before, every quadratic form $Q \in \Pl_d$
gives rise to an ellipsoid $\El_Q = \{ x \in \R^d \mid Q(x) \le 1 \}$
of volume $\omega_d = \vol_{\R^d}(B_1(0))$.  Using the identification
$$ \Gamma / \gop \to {\widehat \Z}^d, \quad \gamma \gop \mapsto \gamma e_d, $$
we obtain
\begin{equation*}
N_1(\El_Q,R) = \# \{x \in {\widehat \Z}^d \mid Q(x) \le R^2 \} 
= \sum_{\gamma \in \Gamma/\gop } \chi_{(0,R^2]}(Q(\gamma e_d)) = \sum_{\gamma \in \Gamma/\gop } \chi_{(-\infty,T]}(f_r(Q \cdot \gamma)), 
\end{equation*}
with $f_r: \Pl_d \to \R$ the Busemann function given in \eqref{eq:busem}
and
$$ T = T(R) = 2 \sqrt{\frac{d}{d-1}} \log R. $$
Before we proceed, we introduce some useful notation.

\begin{notation}\label{notat:chimney}
Henceforth we will refer to the fundamental domain of the $\gop$-action on $\Pl_d$, defined as in \eqref{eq:S} by
$$ \Sl = \bigcup_{t \in \R} \Fl_0(t), $$
as \emph{the chimney},
and to 
\begin{equation} \label{eq:ST} 
\Sl_T = \bigcup_{t \in (-\infty,T]} \Fl_0(t) 
\end{equation}
as the \emph{chimney truncated at level} $T \in \R$ or, simply, as a \emph{truncated chimney}.
\end{notation}

In what follows we need the value of the volume of a truncated chimney.
\begin{equation}\label{eq:volst}
\vol(\Sl_T) = \int_{-\infty}^T \vol(\Fl_0(t)) dt = \int_{-\infty}^T e^{\frac{t}{2}\sqrt{(d-1)d}} \vol(\Fl_0(0)) dt
\end{equation}
$$
 = \frac{2}{\sqrt{(d-1)d}} \vol(\Fl_0(0)) \, e^{\frac{T}{2}\sqrt{(d-1)d}}.  
$$

We continue the calculation preceding Notation \ref{notat:chimney}, and obtain
\begin{multline} \label{eq:N1ref}
N_1(\El_Q,R) = \sum_{\gamma \in \Gamma} \chi_\Sl(Q \cdot \gamma) \chi_{(-\infty,T]}(f_r(Q \cdot \gamma)) = \sum_{\gamma \in \Gamma} \chi_{\Sl_T}(Q \cdot \gamma) = \alpha(d) \sum_{\gamma \in P\Gamma} \chi_{\Sl_T}(Q \cdot \gamma)\\
 =\alpha(d) \, 
\# \{ \gamma \in P\Gamma \mid Q \cdot \gamma \in \Sl_T \}=\alpha(d) \sigma (Q) \, 
\# \left( \{ Q \cdot \gamma \mid \gamma \in \Gamma \} \cap \Sl_T \right),
\end{multline}
where $\sigma (Q)$ is the order of the stabilizer of $Q$ in $P\Gamma$.

Using this connection between counting problems, and the results discussed in Section \ref{sec:latcount}, we obtain

\begin{thm} \label{thm:orbitcount}
Given a point $Q \in \Pl_d$, we have the following asymptotics of the number of orbit points  $Q \cdot \Gamma \subset \Pl_d$ contained in the truncated chimney $\Sl_T$ defined in \eqref{eq:ST}:
$$ \frac{\sigma(Q) \# \{ Q \cdot \gamma \in \Sl_T \mid \gamma \in \Gamma \}}{\vol(\Sl_T)/\vol(\Ml_d)} = 1 + \begin{cases} 
  O\left(T^{\frac{18627}{8320}} \exp\left(-\frac{285}{416 \sqrt{2}}T\right)\right) & \text{if $d=2$,}\\
 O\left( \exp\left(-\frac{243}{158\sqrt{6}}T\right) \right) & \text{if $d=3$,} \\
 O\left( \exp\left(-\frac{43\sqrt{3}}{104}T\right) \right) & \text{if $d=4$,} \\
 O\left( \exp\left(-\sqrt{\frac{d-1}{d}}T\right) \right) & \text{if $d\ge5 $.}
  \end{cases}
  $$
\end{thm}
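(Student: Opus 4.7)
\medskip

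\textbf{Proof proposal.} The key input has essentially been assembled already in formula \eqref{eq:N1ref}, which translates the orbit counting problem into counting primitive lattice points in the dilated ellipsoid $\El_Q$. My plan is to combine this dictionary with the main-term computation from Proposition \ref{prop:siegel}, and then feed in the known bounds on $E_1(\El_Q,R)$ that follow from Huxley, Guo, and G\"otze \emph{via} Proposition \ref{prop:latttoprimlatt}.

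\emph{Step 1: Compute the main term.} Using \eqref{eq:volst} and Proposition \ref{prop:siegel}, I would write
\begin{equation*}
\frac{\vol(\Sl_T)}{\vol(\Ml_d)} \;=\; \frac{2}{\sqrt{(d-1)d}}\cdot\frac{\vol(\Fl_0(0))}{\vol(\Ml_d)}\cdot e^{\frac{T}{2}\sqrt{(d-1)d}} \;=\; \frac{1}{\alpha(d)}\,\frac{\omega_d}{\zeta(d)}\,e^{\frac{T}{2}\sqrt{(d-1)d}}.
\end{equation*}
With the change of variables $T=T(R)=2\sqrt{\tfrac{d}{d-1}}\log R$ used just before \eqref{eq:N1ref}, the exponential becomes exactly $R^d$, so $\vol(\Sl_T)/\vol(\Ml_d)=\omega_d R^d/(\alpha(d)\zeta(d))$.

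\emph{Step 2: Extract the error term.} By \eqref{eq:N1ref}, $\alpha(d)\sigma(Q)\#(\{Q\cdot\gamma\}\cap\Sl_T)=N_1(\El_Q,R)$. Dividing by the main term from Step 1 yields
\begin{equation*}
\frac{\sigma(Q)\,\#\{Q\cdot\gamma\in\Sl_T\mid\gamma\in\Gamma\}}{\vol(\Sl_T)/\vol(\Ml_d)} \;=\; \frac{\zeta(d)\,N_1(\El_Q,R)}{\omega_d\,R^d} \;=\; 1+\frac{\zeta(d)}{\omega_d\,R^d}\,E_1(\El_Q,R).
\end{equation*}
Thus everything reduces to bounding $|E_1(\El_Q,R)|/R^d$.

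\emph{Step 3: Insert the Euclidean error bounds.} Proposition \ref{prop:latttoprimlatt} lets me transfer the estimates on $E_0$ stated in Section \ref{sec:latcount} to $E_1$ (with the same exponent and logarithmic factor). Specifically, I use Huxley's bound $E_1=O(R^{131/208}(\log R)^{18627/8320})$ for $d=2$, Guo's bounds $E_1=O(R^{231/158})$ for $d=3$ and $E_1=O(R^{61/26})$ for $d=4$, and G\"otze's bound $E_1=O(R^{d-2})$ for $d\ge 5$. Dividing by $R^d$ gives a decaying power of $R$ in each case, and substituting $R=\exp\!\bigl(\tfrac{T}{2}\sqrt{\tfrac{d-1}{d}}\bigr)$ converts the power of $R$ into the claimed exponential in $T$. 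For instance, for $d=2$ one has $R^{131/208-2}=R^{-285/208}=e^{-285T/(416\sqrt{2})}$ and $\log R=T/(2\sqrt{2})$ gives the $T^{18627/8320}$ factor; for $d\ge 5$, $R^{-2}=e^{-T\sqrt{(d-1)/d}}$, matching the last line of the theorem exactly. The cases $d=3,4$ are identical arithmetic: $R^{-243/158}$ and $R^{-43/26}$ respectively yield the stated exponents.

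\emph{Main obstacle.} There is no genuine obstacle: the proof is almost entirely bookkeeping once Proposition \ref{prop:siegel}, the chimney identity \eqref{eq:N1ref} and the transfer Proposition \ref{prop:latttoprimlatt} are in hand. The only point requiring a little care is verifying that the implicit constants from the Euclidean counting results, which a priori depend on the shape of the ellipsoid $\El_Q$, may be absorbed into the $O(\cdot)$ for a \emph{fixed} starting point $Q$ (the theorem is pointwise in $Q$, not uniform). For the $d\ge 5$ case G\"otze's explicit dependence \eqref{eq:Goetze} on the eigenvalue ratio $\Lambda/\lambda$ of $M_Q$ makes this transparent; in the remaining dimensions the corresponding implicit constants likewise depend only on $Q$, so no uniformity is needed.
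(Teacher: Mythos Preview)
Your proposal is correct and follows essentially the same route as the paper: combine the dictionary \eqref{eq:N1ref} with the volume identity coming from \eqref{eq:volst} and Proposition \ref{prop:siegel}, then insert the Euclidean error bounds from Section \ref{sec:latcount}. One minor caveat: Proposition \ref{prop:latttoprimlatt} is stated for $d\ge 3$, so for $d=2$ the transfer from $E_0$ to $E_1$ needs the analogous (and equally standard) M\"obius-inversion argument, which the paper likewise invokes only implicitly.
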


\begin{proof}
  Since $\vol(\El_Q) = \omega_d$ for all $Q \in \Pl_d$, we know from Section \ref{sec:latcount} that
  \begin{equation} \label{eq:asympsN1}
  N_1(\El_Q,R) - \frac{\omega_d}{\zeta(d)} R^d = \begin{cases} 
  O\left(R^{\frac{131}{208}} (\log R)^{\frac{18627}{8320}}\right) & \text{if $d=2$,}\\
 O\left(R^{\frac{231}{158}}\right) & \text{if $d=3$,} \\
 O\left(R^{\frac{61}{26}}\right) & \text{if $d=4$,} \\
 O\left(R^{d-2}\right) & \text{if $d\ge5 $.}
  \end{cases}
 \end{equation}
We also know from \eqref{eq:N1ref} that
\begin{equation} \label{eq:N1refag} 
N_1(\El_Q,R) = \alpha(d) \sigma (Q) \, \# \{ Q \cdot \gamma \in \Sl_T \mid \gamma \in \Gamma \}, \quad \text{with $T = 2 \sqrt{\frac{d}{d-1}}\log(R)$.} 
\end{equation}
On the other hand, using \eqref{eq:volst} and Proposition \ref{prop:siegel}, we obtain
\begin{equation} \label{eq:volcalc}
\frac{\omega_d}{\zeta(d)} R^d = \frac{\omega_d}{\zeta(d)} e^{\frac{\sqrt{(d-1)d}}{2} T} = \frac{\omega_d}{\zeta(d)}\frac{\sqrt{(d-1)d}}{2} \frac{\vol(\Sl_T)}{\vol(\Fl_0(0)) }= 
\alpha(d) \, \frac{\vol(\Sl_T)}{\vol(\Ml_d)}.
\end{equation}
The theorem follows now by combining \eqref{eq:asympsN1}, \eqref{eq:N1refag} and\eqref{eq:volcalc}.
\end{proof}

\begin{example}
  Under the identification \eqref{eq:homhypplane} of $(\Pl_2,\dist_{\Pl_2})$ with the rescaled hyperbolic plane $(\H^2,\sqrt{2}\dist_\H)$, the truncated chimney $\Sl_T  \subset \Pl_2$ corresponds to the semi-infinite strip
  $$ \Sl_y = \left\{ z \in \C \Biggm| -\frac{1}{2} \le {\rm{Re}}(z) \le \frac{1}{2},
    {\rm{Im}}(z) \ge y \right\}, \quad \text{with $y = e^{-T/\sqrt{2}}$.} $$
  Since $\area(\Sl_y) = \frac{2}{y}$ and
  $\area(\Ml_2) = \frac{2\pi}{3}$, Theorem \ref{thm:orbitcount} states
  that, for every $z \in \H$, we have that, as $y \to 0$,
  $$ \frac{ |{\mathrm Stab}_\Gamma (z)|\, \cdot\, \# \left\{ \gamma \cdot z \in \Sl_y \mid \gamma \in \SLZ \right\} }
  {3/(\pi y)} = 1 +
  O\left( (\log y)^{\frac{18627}{8320}} y^{\frac{285}{416}} \right).$$
\end{example}

Finally, we present the counting result for horospheres mentioned in the
Introduction.

\begin{thm} \label{thm:orbitcount2}
Given a point $Q \in \Pl_d$, we have the following asymptotics of the number of horospheres in the orbit $\Hl_r \Gamma$ that are at distance at most $T$ from $Q$:
  $$ \frac{ \# \{ \Hl \in \Hl_r \Gamma \mid \Hl \cap B(Q, T)\neq \emptyset \}}
  {\kappa_d\, e^{\frac{T}{2}\sqrt{(d-1)d}}/\vol(\Ml_d)} = 1 + \begin{cases} 
  O\left(T^{\frac{18627}{8320}} \exp\left(-\frac{285}{416 \sqrt{2}}T\right)\right) & \text{if $d=2$,}\\
 O\left( \exp\left(-\frac{243}{158\sqrt{6}}T\right) \right) & \text{if $d=3$,} \\
 O\left( \exp\left(-\frac{43\sqrt{3}}{104}T\right) \right) & \text{if $d=4$,} \\
 O\left( \exp\left(-\sqrt{\frac{d-1}{d}}T\right) \right) & \text{if $d\ge5 $,}
  \end{cases}
$$
where $\kappa_d =\frac{\alpha (d)}{\sqrt{(d-1)d}}\, \vol(\Fl_0(0)) = \frac{\vol(\Ml_d)}{2}\,  \frac{\omega_d}{\zeta(d)}$.
\end{thm}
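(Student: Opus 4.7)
The plan is to reduce the horosphere count to a primitive lattice point count in the ellipsoid $\El_Q$, mirroring the proof of Theorem \ref{thm:orbitcount}, and then apply the estimates of Section \ref{sec:latcount} together with Siegel's formula.

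First, I would parametrise the orbit. Because $\Gamma_0 = \Gamma \cap \mathrm{Stab}_G(\Hl_r)$, the horospheres $\{\Hl_r \cdot \gamma\}$ are indexed by the right cosets $\Gamma_0 \backslash \Gamma$; and by Notation \ref{notatG0} one has $\Gamma_0 = \two \ltimes \gop$ with $|\Gamma_0/\gop| = 2$ in both parities of $d$ (the non-trivial coset being represented by $w = \mathrm{diag}(1,\dots,1,-1,-1)$, which always lies in $\Gamma \setminus \gop$). The map $\gop \gamma \mapsto \gamma^{-1} e_d$ is a bijection $\gop \backslash \Gamma \leftrightarrow \widehat \Z^d$ (since $\gop$ is the stabiliser of $e_d$ for the left $\Gamma$-action), and the further quotient by the $\two$-action, which sends $v \mapsto -v$, yields
$$ \Gamma_0 \backslash \Gamma \;\longleftrightarrow\; \widehat \Z^d / \{\pm 1\}. $$

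Next, I would translate the condition $\Hl_r \cdot \gamma \cap B(Q, T) \neq \emptyset$. Since $\Gamma$ acts by isometries and $f_r$ is a Busemann function with unit-norm gradient and horospheres as level sets, $\dist(Q, \Hl_r \cdot \gamma) = |f_r(Q \cdot \gamma^{-1})|$; combined with \eqref{eq:busem}, this rewrites as
$$ Q(\gamma^{-1} e_d) \in [R^{-2}, R^2], \qquad R = \exp\bigl(\tfrac{T}{2}\sqrt{(d-1)/d}\bigr). $$
For $T$ sufficiently large depending on $Q$, the lower bound holds automatically for every $v \in \widehat \Z^d$ (since $Q$ is positive definite and $\|v\| \ge 1$), so the horosphere count reduces to $\tfrac{1}{2} N_1(\El_Q, R)$.

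Finally, I would substitute $N_1(\El_Q, R) = \tfrac{\omega_d}{\zeta(d)} R^d + E_1(\El_Q, R)$ from \eqref{eq:asympsN1}. Using $R^d = \exp\bigl(\tfrac{T}{2}\sqrt{d(d-1)}\bigr)$ and Proposition \ref{prop:siegel} rewritten as $\tfrac{\omega_d}{2\zeta(d)} = \kappa_d/\vol(\Ml_d)$, the main term matches $\kappa_d \exp\bigl(\tfrac{T}{2}\sqrt{d(d-1)}\bigr)/\vol(\Ml_d)$ exactly. Each error factor $R^{\alpha - d}$ translates via $\exp\bigl(\tfrac{T(\alpha-d)}{2}\sqrt{(d-1)/d}\bigr)$; substituting $\alpha - d = -285/208,\,-243/158,\,-43/26,\,-2$ (with the $T^{18627/8320}$ log-factor when $d=2$) reproduces the four displayed rates. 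The one subtlety requiring care is that two different $\Z/2$-factors appear in the argument: the factor $\alpha(d) = |Z(G)|$ from Theorem \ref{thm:orbitcount} (arising from the passage $\Gamma \to P\Gamma$) is distinct from the factor $\tfrac{1}{2}$ arising here from $|\Gamma_0/\gop| = 2$, and verifying via Proposition \ref{prop:siegel} that they combine to recover the identity $\kappa_d = \tfrac{\alpha(d) \vol(\Fl_0(0))}{\sqrt{(d-1)d}}$ is the essential consistency check.
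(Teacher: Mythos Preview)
Your proposal is correct and follows essentially the same route as the paper: both identify the horosphere count with $\tfrac{1}{2}N_1(\El_Q,R)$ via the coset parametrisation $\Gamma_0\backslash\Gamma \leftrightarrow \widehat\Z^d/\{\pm 1\}$ and then invoke the error estimates \eqref{eq:asympsN1} together with the volume identity from Proposition~\ref{prop:siegel}. If anything, you are slightly more careful than the paper in making explicit that the distance condition is $|f_r(Q\gamma^{-1})|\le T$ and that the lower bound $Q(v)\ge R^{-2}$ becomes vacuous for $T$ large (a point the paper's chain of equalities passes over silently).
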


\begin{proof} We have that 
\begin{multline*}
N_1(\El_Q,R) = \# \{ \gamma \gop \in \Gamma /\gop \mid Q(\gamma e_d) \le R^2 \} 
= \# \{ \gamma \gop \in \Gamma /\gop \mid f_r (Q\cdot \gamma) \le T \}
\\
= \# \{ \gop \gamma  \in \gop \backslash \Gamma \mid f_{r\cdot \gamma} (Q ) \le T \},
\end{multline*}
with $f_r: \Pl_d \to \R$ the Busemann function given in \eqref{eq:busem}
and $ T = 2 \sqrt{\frac{d}{d-1}} \log R.$ It follows that
\begin{multline*}
N_1(\El_Q,R) = \\ \# \{ \gop \gamma  \in \gop \backslash \Gamma \mid \dist_{\Pl_d} (Q, \Hl_{r\cdot \gamma}) \le T \}
=\alpha (d) \# \{ \gop \gamma  \in \gop \backslash P\Gamma \mid \dist_{\Pl_d} (Q, \Hl_{r\cdot \gamma}) \le T \}\\
=2 \# \{ P\Gamma_0 \gamma  \in P\Gamma_0 \backslash P\Gamma \mid \dist_{\Pl_d} (Q, \Hl_{r\cdot \gamma}) \le T \}
= 2 \# \left[ \Hl_r \Gamma \cap B(Q,T)\right].  
\end{multline*}

The factor $2$ in the last but one line appears because if $d$ is even then $\alpha (d)=2$ and $\gop $ coincides with $P\Gamma_0$, while if $d$ is odd then $\alpha (d)=1$, $\Gamma = P\Gamma$,  $\Gamma_0 = P\Gamma_0$, and $\gop$ is a normal subgroup of index two in $\Gamma_0$ (therefore two cosets of $\gop$ map to the same coset of $\Gamma_0$). 

The rest of the argument follows as in the proof of Theorem \ref{thm:orbitcount}.    
\end{proof}

\section{Effective equidistribution of expanding horospheres}
\label{sec:effeqdisthorospheres}

\subsection{Average estimate of the error term}
Recall that $\Fl_d \subset \Pl_d$ denotes a fundamental domain of the
$\Gamma$-right action on $\Pl_d$. Every function $f \in C(\Ml_d) \cap L^2(\Ml_d)$ can be lifted to a $\Gamma$-periodic function $\tilde f = f \circ \pi \in C(\Pl_d)$. Since every $Q \in \Pl_d$ represents a specific ellipsoid $\El_Q$, we can consider the function $\tilde f$ as a map associating a weight to each of these ellipsoids. Note also that, for each
$R > 0$, the map $Q \mapsto N_1(\El_Q,R)$ is $\Gamma$-invariant.

Therefore, the integral $\int_{\Ml_d} f(\bar Q) N_1(\El_Q,R) d\vol_{\Ml_d}(\bar Q)$ is well defined and, using \eqref{eq:N1ref} and the fact that $\sigma(Q) > 1$ only on a set of measure zero, we obtain
\begin{multline*}
\int_{\Ml_d} f(\bar Q) N_1(\El_Q,R) d\vol_{\Ml_d}(\bar Q) = \alpha(d) \, \sum_{\gamma \in \Gamma} \int_{\Fl_d} \tilde f(Q) \chi_{\Sl_T}(Q \cdot \gamma) d\vol_{\Pl_d}(Q) \\ = 
\alpha(d) \, \int_{\Pl_d} \tilde f(Q) \chi_{\Sl_T}(Q) d\vol_{\Pl_d}(Q) = \alpha(d) \, \int_{-\infty}^T \int_{\Fl_0(t)} \tilde f(Q) d\vol_{\Hl_r(t)}(Q) dt
\end{multline*}
for $T = 2 \sqrt{\frac{d}{d-1}} \log(R)$. We now apply Lemma \ref{lem:horovolrel}, \eqref{eq:volhormod}, and obtain
$$
\oint_{\Ml_d} f(\bar Q) N_1(\El_Q,R) d\vol_{\Ml_d}(\bar Q) = \frac{\alpha(d) \, \vol(\Fl_0(0))}{\vol(\Ml_d)} \int_{-\infty}^T e^{\frac{t}{2}\sqrt{(d-1)d}}
\oint_{\Fl_0(t)} \tilde f (Q) d\vol_{\Hl_r(t)}(Q) dt.
$$
Using Proposition \ref{prop:siegel}, the equality can be rewritten as
\begin{equation} \label{eq:unifcount}
\oint_{\Ml_d} f(\bar Q) N_1(\El_Q,R) d\vol_{\Ml_d}(\bar Q) = \frac{\sqrt{(d-1)d }\, \omega_d}{2 \zeta(d)}
 \int_{-\infty}^T e^{\frac{t}{2}\sqrt{(d-1)d}}
\oint_{\Fl_0(t)} \tilde f (Q) d\vol_{\Hl_r(t)}(Q) dt.
\end{equation}
This identity, together with the $L^2$-integral error estimate \eqref{eq:meansqbd}, are crucial ingredients in the proof of Proposition \ref{prop:equihoroball0} below.

\begin{notation}
  For $\tilde f \in C(\Pl_d)$, we define
  \begin{equation} \label{eq:F_f} 
  F_{\tilde f}(t) = \oint_{\Fl_0(t)} \tilde f (Q) d\vol_{\Hl_r(t)} (Q). 
  \end{equation}
\end{notation}

\begin{prop} \label{prop:equihoroball0} 
Let $d \ge 2$, $f \in C(\Ml_d) \cap L^2(\Ml_d)$ and $\tilde f = f \circ \pi \in C(\Pl_d)$. We have
\begin{equation} \label{eq:esthoroball0} 
\left\vert \int_{-\infty}^T
    e^{\frac{t}{2}\sqrt{(d-1)d}} \left( F_{\tilde f}(t) -
      \oint_{\Ml_d} f (\bar Q ) d\vol_{\Ml_d}(\bar Q) \right) dt \right\vert  
 \le C_d \, \Vert f \Vert_{\Ml_d} \, 
 e^{\frac{T}{4}\, \sqrt{(d-1)d}},
\end{equation}
for all $T \in \R$, where $\Vert f \Vert_{\Ml_d}^2 = \oint_{\Ml_d} f^2 d\vol_{\Ml_d}$ and
$$ C_d = 2 \sqrt{\frac{4 \zeta(d)}{d(d-1) \omega_d}}. $$
\end{prop}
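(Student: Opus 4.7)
The plan is to reinterpret the left-hand side of \eqref{eq:esthoroball0} as a weighted count of primitive lattice points averaged over $\Ml_d$, and then apply the $L^2$-integral error estimate \eqref{eq:meansqbd} via the Cauchy--Schwarz inequality. Set $\beta = \sqrt{(d-1)d}$, $A = \oint_{\Ml_d} f\, d\vol_{\Ml_d}$, and $R = e^{\frac{T}{2}\sqrt{\frac{d-1}{d}}}$, so that $T = 2\sqrt{d/(d-1)}\log R$ and $R^d = e^{T\beta/2}$.

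First, I would use \eqref{eq:unifcount} to rewrite the integral
$$ \int_{-\infty}^T e^{\frac{t\beta}{2}} F_{\tilde f}(t)\, dt = \frac{2 \zeta(d)}{\beta\, \omega_d} \oint_{\Ml_d} f(\bar Q)\, N_1(\El_Q, R)\, d\vol_{\Ml_d}(\bar Q). $$
On the other hand, an elementary computation gives
$$ \int_{-\infty}^T e^{\frac{t\beta}{2}} A\, dt = \frac{2A}{\beta}\, e^{\frac{T\beta}{2}} = \frac{2\zeta(d)}{\beta\, \omega_d} \cdot A \cdot \frac{\omega_d}{\zeta(d)}\, R^d. $$
Subtracting, and recalling that $E_1(\El_Q, R) = N_1(\El_Q,R) - \frac{\omega_d}{\zeta(d)} R^d$, the quantity to estimate becomes
$$ \left\vert \int_{-\infty}^T e^{\frac{t\beta}{2}}\bigl(F_{\tilde f}(t) - A\bigr) dt \right\vert = \frac{2\zeta(d)}{\beta\, \omega_d} \left\vert \oint_{\Ml_d} f(\bar Q)\, E_1(\El_Q, R)\, d\vol_{\Ml_d}(\bar Q) \right\vert. $$

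Next, I would apply Cauchy--Schwarz to the averaged integral:
$$ \left\vert \oint_{\Ml_d} f(\bar Q)\, E_1(\El_Q, R)\, d\vol_{\Ml_d}(\bar Q) \right\vert \le \Vert f \Vert_{\Ml_d} \left( \oint_{\Ml_d} |E_1(\El_Q,R)|^2\, d\vol_{\Ml_d}(\bar Q) \right)^{1/2}. $$
By the mean square bound \eqref{eq:meansqbd}, which in the uniform form (valid for $d=2$ and $d\ge 3$ by taking the larger constant) gives
$$ \oint_{\Ml_d} |E_1(\El_Q,R)|^2\, d\vol_{\Ml_d}(\bar Q) \le \frac{4\, \omega_d}{\zeta(d)}\, R^d, $$
the right-hand side of Cauchy--Schwarz is at most $2\sqrt{\omega_d/\zeta(d)}\, R^{d/2}\, \Vert f \Vert_{\Ml_d}$. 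Since $R^{d/2} = e^{T\beta/4}$, combining this with the prefactor $\frac{2\zeta(d)}{\beta\, \omega_d}$ yields the bound
$$ \frac{2\zeta(d)}{\beta\, \omega_d} \cdot 2 \sqrt{\frac{\omega_d}{\zeta(d)}}\, e^{\frac{T\beta}{4}}\, \Vert f \Vert_{\Ml_d} = \frac{4}{\beta} \sqrt{\frac{\zeta(d)}{\omega_d}}\, e^{\frac{T\beta}{4}}\, \Vert f \Vert_{\Ml_d}. $$
Finally, a direct simplification shows that $\frac{4}{\sqrt{(d-1)d}} \sqrt{\zeta(d)/\omega_d} = 2\sqrt{4\zeta(d)/(d(d-1)\omega_d)} = C_d$, which completes the proof.

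The only subtle point is the uniform use of \eqref{eq:meansqbd} in both $d=2$ and $d\ge 3$: for $d\ge 3$ Schmidt's inequality gives a factor $2$ on the right-hand side, while for $d=2$ the correct factor, deduced from Kelmer--Yu's variant, is $4$; the stated constant $C_d$ absorbs the worst case. Modulo this bookkeeping the argument is a short Cauchy--Schwarz computation after the crucial identity \eqref{eq:unifcount}, so no serious obstacle arises.
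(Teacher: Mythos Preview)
Your proof is correct and follows essentially the same route as the paper: both use the identity \eqref{eq:unifcount} to convert the horospherical integral into an averaged primitive-lattice count, then apply Cauchy--Schwarz together with the mean square bound \eqref{eq:meansqbd}, and finish with the same constant bookkeeping (including the uniform factor $4$ to cover $d=2$). The only cosmetic difference is the order of operations---you subtract first and then apply Cauchy--Schwarz, whereas the paper applies Cauchy--Schwarz first and then identifies the two pieces---but the argument is the same.
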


Note that for $d \ge 3$, $C_d = 2 \sqrt{\frac{2 \zeta(d)}{d(d-1) \omega_d}}.$

\begin{proof}
Cauchy-Schwarz and the inequality in \eqref{eq:meansqbd} for $d\ge 2$ yield
  \begin{multline} \label{eq:complvolform}
  \left\vert \oint_{\Ml_d} f(\bar Q)N_1(\El_Q,R) d\vol_{\Ml_d}(\bar Q) - \frac{\omega_d}{\zeta(d)} R^d \oint_{\Ml_d} f(\bar Q)
     d\vol_{\Ml_d}(\bar Q) \right\vert \\
     = \left\vert \oint_{\Ml_d} f(\bar Q) E_1(\El_Q,R) d\vol_{\Ml_d}(\bar Q) \right\vert \le \Vert f \Vert_{\Ml_d} \cdot
     \left( \oint_{\Ml_d} | E_1(\El_Q,R) |^2 d\vol_{\Ml_d}(\bar Q)
     \right)^{\frac{1}{2}} \\ \le \sqrt{\frac{4\omega_d}{\zeta(d)}}
     \,  \Vert f \Vert_{\Ml_d} \, R^{\frac{d}{2}}.
  \end{multline}
 Since $R = e^{\frac{T}{2}\sqrt{\frac{d-1}{d}}}$, we have
  \begin{equation} \label{eq:volrdandelse} \frac{\omega_d}{\zeta(d)}
    R^d \oint_{\Ml_d} f(\bar Q) d\vol_{\Ml_d}(\bar Q) =
    \frac{\sqrt{(d-1)d}\, \omega_d}{2 \zeta(d)} \int_{-\infty}^T e^{\frac{t}{2}\sqrt{(d-1)d}}
    \oint_{\Ml_d} f(\bar Q)d\vol_{\Ml_d}(\bar Q) dt.
  \end{equation}
Plugging \eqref{eq:unifcount} and \eqref{eq:volrdandelse} into
  \eqref{eq:complvolform} and dividing by $\frac{\sqrt{(d-1)d}\, \omega_d}{2 \zeta(d)}$, we obtain
\begin{equation*}    
\left\vert \int_{-\infty}^T e^{\frac{t}{2} \sqrt{(d-1)d}} \left(
      F_{\tilde f}(t) - \oint_{\Ml_d} f(\bar Q) d\vol_{\Ml_d} (\bar Q) \right) dt
    \right\vert \le 2 \sqrt{\frac{4 \zeta(d)}{d(d-1) \omega_d}}
\, \Vert f \Vert_{\Ml_d}\, e^{\frac{T}{4}\sqrt{(d-1)d}}.    
\end{equation*}
  
\end{proof}

\subsection{Proof of the main results} Now we make use of the results in Appendix \ref{app:realline} to prove our main results stated in the
Introduction. Let us first recall Theorem \ref{thm:main1}, namely an
effective equidistribution of the expanding horospheres $\Hl_r(t)$ modulo $\gad$
for specific values $t$ within shrinking intervals $[T,T_1]$ as
$T \to \infty$. We reformulate this result ``upstairs'' in $\Pl_d$ in
terms of the $\gad$-periodic lift of a continuous function on
$\Ml_d$.

\begin{thm} \label{thm:main1again} 
Let $d \ge 2$, $f \in C(\Ml_d) \cap L^2(\Ml_d)$ and $\tilde f = f \circ \pi \in C(\Pl_d)$. For every $\varepsilon >0$ and $\kappa>0$ there exists $T_0 = T_0(d,\varepsilon,\kappa,\Vert f \Vert_{\Ml_d})$
and $C_0 = C_0(d,\kappa,\Vert f \Vert_{\Ml_d})$
such that the following holds: In every interval 
$$ I_T = \left[ 
T,
T+ C_0 e^{-\varepsilon T}
\right]
$$
with $T \ge T_0$, there exists $t \in I_T$ such that
$$ \left\vert F_{\tilde f}(t) - \oint_{\Ml_d} f d\vol_{\Ml_d} \right\vert
 < \kappa e^{-\left( \frac{1}{4}\sqrt{(d-1)d} \right) t + \varepsilon t}, $$
  where $F_{\tilde f}$ is as defined in \eqref{eq:F_f}.
\end{thm}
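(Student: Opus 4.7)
The plan is to derive the pointwise conclusion from the averaged bound in Proposition~\ref{prop:equihoroball0} by a contradiction argument, in the spirit of the exponential-decay lemmas announced in Appendix~\ref{app:realline}. Set $\beta = \tfrac{1}{2}\sqrt{(d-1)d}$ and
$$\varphi(t) := F_{\tilde f}(t) - \oint_{\Ml_d} f\, d\vol_{\Ml_d},$$
a continuous function of $t$, since $\Fl_0(t) = \Phi_t(\Fl_0(0))$ depends smoothly on $t$ and $\tilde f$ is continuous. Proposition~\ref{prop:equihoroball0} reads
$$\left|\int_{-\infty}^{T'} e^{\beta t}\,\varphi(t)\,dt\right| \le C_d\,\Vert f \Vert_{\Ml_d}\,e^{\beta T'/2} \qquad \text{for every } T'\in\R.$$

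First I would \emph{localize} this bound: applying it at both endpoints of $I_T = [T, T+L]$ with $L = C_0 e^{-\varepsilon T}$, the triangle inequality yields
$$\left|\int_T^{T+L} e^{\beta t}\,\varphi(t)\,dt\right| \le C_d\,\Vert f \Vert_{\Ml_d}\,\bigl(1+e^{\beta L/2}\bigr)\,e^{\beta T/2}.$$

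Then I would argue by contradiction. Suppose every $t \in I_T$ satisfies $|\varphi(t)|\ge \kappa\,e^{(-\beta/2+\varepsilon)t}$. Continuity of $\varphi$ and the intermediate value theorem force $\varphi$ to keep a constant sign on $I_T$, since otherwise $\varphi$ would vanish somewhere in $I_T$ and contradict the strictly positive lower bound. Therefore
$$\left|\int_T^{T+L} e^{\beta t}\,\varphi(t)\,dt\right| = \int_T^{T+L} e^{\beta t}\,|\varphi(t)|\,dt \ge \kappa \int_T^{T+L} e^{(\beta/2+\varepsilon) t}\,dt \ge \kappa L\, e^{(\beta/2+\varepsilon) T} = \kappa C_0\, e^{\beta T/2},$$
the last inequality using $e^x - 1 \ge x$ and the definition of $L$. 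Setting $C_0 = 4 C_d\,\Vert f\Vert_{\Ml_d}/\kappa$ and choosing $T_0$ so large that $T \ge T_0$ implies $L \le (2\log 3)/\beta$ (hence $1+e^{\beta L/2} < 4$), the two displayed estimates combine to give
$$\kappa C_0 \le C_d\,\Vert f\Vert_{\Ml_d}\,\bigl(1+e^{\beta L/2}\bigr) < 4 C_d\,\Vert f\Vert_{\Ml_d} = \kappa C_0,$$
which is the sought contradiction.

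The main obstacle I anticipate is arithmetic rather than conceptual: one must sharpen the lower bound on $\int_T^{T+L} e^{(\beta/2+\varepsilon) t}\,dt$ (using $(e^{\alpha L}-1)/\alpha$ in place of the crude $L\, e^{\alpha T}$) and then tune $C_0$ and $T_0$ jointly so that the inequalities close with the precise constants $\tfrac{\sqrt{(d-1)d}+4\varepsilon}{2\kappa}\,C_d\,\Vert f\Vert_{\Ml_d}$ announced in the remark following Theorem~\ref{thm:main1}. I would package this bookkeeping as a generic exponential-decay lemma in Appendix~\ref{app:realline}, so that the proof of Theorem~\ref{thm:main1again} reduces to a single invocation of the lemma with the data $(\varphi, \beta)$.
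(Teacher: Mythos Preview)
Your proposal is correct and follows essentially the same route as the paper: the paper defines $g(t) = F_{\tilde f}(t) - \oint_{\Ml_d} f\,d\vol_{\Ml_d}$, invokes Proposition~\ref{prop:equihoroball0} to obtain the bound \eqref{eq:intest} with $\alpha = \tfrac{1}{2}\sqrt{(d-1)d}$, $\beta=\alpha/2$, $\tilde C = C_d\Vert f\Vert_{\Ml_d}$, and then applies Proposition~\ref{prop:intexpconseq}, whose proof is precisely your localize-and-contradict argument (constant sign on a bad interval, then compare the integral lower bound against the triangle-inequality upper bound). Your value $C_0 = 4C_d\Vert f\Vert_{\Ml_d}/\kappa$ already matches the paper's, and your remark that the sharper bound $(e^{(\beta/2+\varepsilon)L}-1)/(\beta/2+\varepsilon)$ in place of $L$ is needed to recover the announced $T_0$ is exactly right.
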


\begin{proof}[Proof of Theorem \ref{thm:main1again}]
Let 
  $$ g(t) = F_{\tilde f}(t) - \oint_{\Ml_d} f\, d\vol_{\Ml_d}. $$
Proposition \ref{prop:equihoroball0} tells us that $g \in C(\R)$ satisfies
  \begin{equation} \label{eq:gprop}
  \left\vert \int_{-\infty}^T e^{\alpha t} g(t) dt \right\vert \le C_d \Vert f \Vert_{\Ml_d} e^{\beta T} \quad \text{for all $T \in \R$}
  \end{equation}
  with 
  \begin{equation} \label{eq:albet}
  \alpha = \frac{1}{2} \sqrt{(d-1)d} \quad \text{and} \quad
  \beta = \frac{1}{4} \sqrt{(d-1)d} = \frac{\alpha}{2}.
  \end{equation}
We apply Proposition \ref{prop:intexpconseq} with $\tilde C = C_d \Vert f \Vert_{\Ml_d}$ and
  conclude that, for every interval $I_T = [T,T+C_0e^{-\varepsilon T}] \subset [T_0,\infty)$ 
  with $T_0$ in \eqref{eq:T_0} and $C_0 = \frac{4 \tilde C}{\kappa} = \frac{4 C_d \Vert f \Vert_{\Ml_d}}{\kappa}$, there exists $t \in I_T$ with
  $$ |g(t)| = \left\vert F_{\tilde f}(t) - \oint_{\Ml_d} f\, d\vol_{\Ml_d}
  \right\vert \le \kappa e^{ -(\alpha-\beta) t + \varepsilon t} = \kappa e^{-\left( \frac{1}{4}\sqrt{(d-1)d} \right) t + \varepsilon t}. $$ 
\end{proof}

Finally, we recall Theorem \ref{thm:main2} and present its proof:

\begin{thm} \label{thm:main2again} Let $d \ge 2$, $f \in C^1(\Ml_d) \cap L^2(\Ml_d)$ and $\tilde f = f \circ \pi \in C(\Pl_d)$. Then 
there exists a constant $T_d$, depending only on the dimension $d$, such that for all $t \ge T_d$ we have
 $$
    \left\vert F_{\tilde f}(t) - \oint_{\Ml_d} f\, d\vol_{\Ml_d} \right\vert
    \le \left( C_d \Vert f \Vert_{\Ml_d} + 4 \Vert \grad f \Vert_\infty
    \right) e^{ - \left( \frac{1}{8} \sqrt{(d-1)d} \right) t},
 $$
where $C_d$ is the constant from Proposition \ref{prop:equihoroball0}, $\Vert f \Vert_{\Ml_d}^2 = \oint_{\Ml_d} f^2\, d\vol_{\Ml_d}$, and $F_{\tilde f}$ is as defined in \eqref{eq:F_f}.
\end{thm}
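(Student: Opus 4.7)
The plan is to bootstrap Theorem \ref{thm:main1again} to a bound valid at every $t \ge T_d$ by using the $C^1$ regularity of $f$ to control how $F_{\tilde f}$ changes as $t$ varies.

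For a given $t \ge T_d$, I would invoke Theorem \ref{thm:main1again} on the interval $I_t = [t, t + C_0 e^{-\varepsilon t}]$ situated to the \emph{right} of $t$, with the specific parameter choices $\varepsilon = \frac{1}{8}\sqrt{(d-1)d}$ and $\kappa = C_d\,\Vert f \Vert_{\Ml_d}$. Substituting these into the formulas in Theorem \ref{thm:main1again}, one finds $C_0 = 4$ and, since $\sqrt{(d-1)d} + 4\varepsilon = \tfrac{3}{2}\sqrt{(d-1)d}$ while $\kappa/(C_d\Vert f\Vert_{\Ml_d}) = 1$, the threshold $T_0$ collapses to $\tfrac{8}{\sqrt{(d-1)d}}\log\bigl(\tfrac{3}{4}\sqrt{(d-1)d}\bigr) = T_d$. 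Hence Theorem \ref{thm:main1again} supplies $t_0 \in I_t$ with
$$ \left\vert F_{\tilde f}(t_0) - \oint_{\Ml_d} f\, d\vol_{\Ml_d} \right\vert \le C_d\,\Vert f \Vert_{\Ml_d}\, e^{-\varepsilon t_0} \le C_d\,\Vert f \Vert_{\Ml_d}\, e^{-\varepsilon t}, $$
the last inequality using $t_0 \ge t$. Placing the interval to the right of $t$ is what avoids any additional multiplicative factor here.

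The second ingredient is a Lipschitz bound $|F_{\tilde f}(t) - F_{\tilde f}(t_0)| \le \Vert \grad f \Vert_\infty\, |t - t_0|$. Since $\Fl_0(t) = \Phi_t(\Fl_0(0))$ and $\Phi_t$ rescales horospherical volume by the uniform factor $e^{\frac{t}{2}\sqrt{(d-1)d}}$ (Lemma \ref{lem:horovolrel}), a change of variables in the normalized integral gives
$$ F_{\tilde f}(t) = \frac{1}{\vol(\Fl_0(0))} \int_{\Fl_0(0)} \tilde f \circ \Phi_t\, d\vol_{\Hl_r(0)}. $$
For each fixed $Q \in \Fl_0(0)$, the points $\Phi_t(Q)$ and $\Phi_{t_0}(Q)$ lie on the unit-speed geodesic through $Q$ asymptotic to $r(\infty)$, so $\dist_{\Pl_d}(\Phi_t(Q), \Phi_{t_0}(Q)) = |t - t_0|$. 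Since $\pi$ is a local isometry, $\Vert \grad \tilde f \Vert_\infty = \Vert \grad f \Vert_\infty$, and the mean value inequality applied pointwise, followed by integration over $\Fl_0(0)$, yields the desired Lipschitz estimate.

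Combining the two bounds via the triangle inequality with $|t - t_0| \le 4 e^{-\varepsilon t}$ produces exactly
$$ \left\vert F_{\tilde f}(t) - \oint_{\Ml_d} f\, d\vol_{\Ml_d} \right\vert \le \bigl( C_d\,\Vert f \Vert_{\Ml_d} + 4\,\Vert \grad f \Vert_\infty \bigr)\, e^{-\frac{1}{8}\sqrt{(d-1)d}\, t}. $$
There is no serious obstacle: the two error sources --- the equidistribution defect at $t_0$ and the $C^1$ transport error between $t$ and $t_0$ --- are arranged, via the choice $\varepsilon = \frac{1}{8}\sqrt{(d-1)d}$, to decay at the common rate $e^{-\varepsilon t}$, while $\kappa = C_d\Vert f\Vert_{\Ml_d}$ is exactly what makes both $C_0$ and the threshold $T_0$ match the constants appearing in the statement.
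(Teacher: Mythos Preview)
Your proof is correct and essentially identical to the paper's: the paper defines the same $g(t) = F_{\tilde f}(t) - \oint_{\Ml_d} f$, establishes the same Lipschitz bound via $\Phi_t$, and then applies Corollary~\ref{cor:intexpconseq} (whose proof is exactly your argument with the same choices $\varepsilon = \frac{\alpha-\beta}{2}$ and $\kappa = \tilde C$). The only cosmetic difference is that you cite Theorem~\ref{thm:main1again} directly whereas the paper routes through Corollary~\ref{cor:intexpconseq}, but both rest on Proposition~\ref{prop:intexpconseq} with identical parameters.
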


\begin{proof}[Proof of Theorem \ref{thm:main2again}]
We can assume that $\Vert \grad f \Vert_\infty < \infty$ since, otherwise, there is nothing to prove. Let
 $$ g(t) = F_{\tilde f}(t) - \oint_{\Ml_d} f\, d\vol_{\Ml_d}. $$
 Then $g$ is Lipschitz continuous with Lipschitz constant $\Vert \grad f \Vert_\infty$, since
 $$ |g(t_2) - g(t_1)| \le \oint_{\Fl_0(t_1)}
 \left\vert \tilde f \circ \Phi_{t_2-t_1} - \tilde f \right\vert \,
 d\vol_{\Fl_0(t_1)} $$
 and
 $$ \left\vert \tilde f \circ \Phi_t(Q) - \tilde f(Q) \right\vert \le |t| \,
 \Vert \grad f \Vert_\infty \quad \text{for all $t \in \R$ and $Q \in \Ml_d$.}
 $$
 Moreover, as in the proof of Theorem \ref{thm:main1again}, $g$
 satisfies \eqref{eq:gprop} with the constants $\alpha,\beta$ chosen
 as in \eqref{eq:albet}. We apply Corollary \ref{cor:intexpconseq} and obtain
 \begin{equation*}
 |g(t)| = \left\vert F_{\tilde f}(t) - \oint_{\Ml_d} f\, d\vol_{\Ml_d}
 \right\vert \le (C_d\Vert f \Vert_{\Ml_d} + 4 \Vert \grad f \Vert_\infty)
 e^{- \left( \frac{1}{8} \sqrt{(d-1)d} \right) t}
 \end{equation*}
 for all
 $$ t \ge T_d = \frac{2}{\alpha-\beta} \log(\alpha+\beta) = \frac{8}{\sqrt{(d-1)d}} \log\left( \frac{3}{4}\sqrt{(d-1)d} \right). $$
 This finishes the proof of the theorem.
 \end{proof}

\begin{remarks}\label{rem:comparison2}
In the case $d=2$, Theorems \ref{thm:main1again} and \ref{thm:main2again} compare with existing results as follows.
\begin{enumerate}
\item The exponent in Theorem \ref{thm:main1again} is the same as the one obtained by Zagier \cite{Zag} and Sarnak \cite{Sarnak}, after proper renormalization (see explanations about this renormalization in Example \ref{ex:hypplane1}). Unlike in \cite{Zag,Sarnak}, the estimate in Theorem \ref{thm:main1again} does not cover all (large enough) values of the parameter $t$, still its interest may spring from the fact that it is obtained by a second moment formula going back to Rogers and Schmidt.     

\medskip

\item Under the Riemann Hypothesis, Zagier \cite{Zag} and Sarnak \cite{Sarnak}
    derived a much stronger result: For any
    smooth function $f$ on the unit tangent bundle $S \Ml_2$ of
    $\Ml_2$ and its $\SLZ$-periodic lift $\tilde f$ on the unit
    tangent bundle of $(\H,\dist_\H)$ and for all $\varepsilon > 0$,
    \begin{equation}\label{eq:RH}
    \int_{-1/2}^{1/2} \tilde f(X_\infty(x+iy))dx -
    \oint_{S\Ml_2} f d\vol_{S \Ml_2} = O(y^{3/4-\varepsilon}),
    \quad \text{as $y \to 0$.}
    \end{equation}
    Here $d\vol_{S \Ml_2}$ is the Liouville measure on $S \Ml_2$ and
    $X_\infty$ is the vector field of unit tangent vectors pointing towards $+ \infty \in \partial \H$. The error term in \eqref{eq:RH}
    corresponds to an error term of $O\left( e^{-\frac{3}{8}\sqrt{2} t + \varepsilon t} \right)$ in Theorems \ref{thm:main1again} and \ref{thm:main2again}.

We will discuss further connections between primitive lattice points in ellipses, $\SLZ$-orbits, and equidistribution of horocycles in $\H$ in more detail in the forthcoming paper \cite{DrP}.   
\end{enumerate}
 \end{remarks}

 \begin{remark} \label{rem:two}
    As mentioned in the paragraph preceding Notation \ref{notatf0}, we can identify $\pi(\Hl_r(t))$ with the
    quotient $\Hl_r(t)/\Gamma_0$ which, in turn, coincides with $\Hl_r(t)/\gop $ when $d$ is even, or it is obtained from $\Hl_r(t)/\gop $ by taking a quotient with the group $\two \simeq \Z_2$, defined in \eqref{eq:two}, when $d$ is odd.

Consider an arbitrary function $f \in C(\Ml_d)$ and its $\Gamma$-periodic lift
    $\tilde f = f \circ \pi \in C(\Pl_d)$. We have that 
    $$
\oint_{\Fl_0(t)}
     \tilde f \, d\vol_{\Hl_r(t)} = \oint_{\pi(\Hl_r(t))} f d\vol_{\pi(\Hl_r(t))}.   
    $$
    
The equality is obvious when $d$ is even. When $d$ is odd, $\Fl_0(t)$ is composed of a fundamental domain of $\Gamma_0$ and a translate of it by the nontrivial element of $\two$. Using this and the $\Gamma$-invariance of the function $\tilde f$ we obtain the equality between the two normalized integrals. 

Thus, if we wish to formulate Theorems \ref{thm:main1again} and \ref{thm:main2again} entirely in terms of the quotient space $\Ml_d$, we can replace $F_{\tilde f}(t)$ by $\oint_{\pi(\Hl_r(t))} f d\vol_{\pi(\Hl_r(t))}$, as is done in the introduction.
\end{remark}    

The following remark is in preparation of the next subsection.

\begin{remark} \label{rem:conint} 
The quotient $\Hl_r(t)/\gop $ is a fiber bundle
    over the space $\Ml_{d-1}$ with fibers diffeomorphic to
    $\T^{d-1} = \R^{d-1}/\Z^{d-1}$:
    \begin{equation*}
    \xymatrix@C=6mm@R=6mm{\T^{d-1} \ar[r] & \Hl_r(t)/\gop \ar[d] \\
    & \Ml_{d-1}
    }
    \end{equation*}
    Let $\Fl_{d-1} \subset \Pl_{d-1}$
    be a fundamental domain of the $\SLdmZ$-right action on
    $\Pl_{d-1}$. Using \eqref{eq:F00} and the inclusion map
    \begin{equation} \label{eq:iota} 
    \iota: \gdm \to \gd, \quad \iota(h) = \begin{pmatrix} h & 0 \\ 0 & 1
    \end{pmatrix}, 
    \end{equation}
    we have
    \begin{multline}
     \oint_{\pi(\Hl_r(t))} f d\vol_{\pi(\Hl_r(t))} = \oint_{\Fl_0(0)}
     \tilde f \circ \Phi_t\, d\vol_{\Hl_r(0)} \\ =
     \oint_{\hc_{d-1}^{-1}(\Fl_{d-1})} \int_{\left[ -\frac{1}{2},
         \frac{1}{2}\right)^{d-1}} \tilde f \left( SO(d) \cdot
        \iota(h) \begin{pmatrix} e^{-\frac{\lambda t}{2}} \Id_{d-1} &
         0 \\ e^{\frac{\mu t}{2}} x^\top & e^{\frac{\mu
             t}{2}} \end{pmatrix} \right) dx\, dh.
    \end{multline}
    The inner integral can be viewed as an integration over the fiber and,
    introducing the function
    \begin{equation} \label{eq:Fth} 
    \tilde F(t,h) = \int_{\left[
        -\frac{1}{2}, \frac{1}{2}\right)^{d-1}} \tilde f \left( SO(d)
     \cdot \iota(h) \begin{pmatrix} e^{-\frac{\lambda t}{2}}
        \Id_{d-1} & 0 \\ e^{\frac{\mu t}{2}} x^\top & e^{\frac{\mu
            t}{2}} \end{pmatrix} \right) dx, 
    \end{equation}
    we see that $\tilde F(t,\cdot)$ is $\SLdmZ$-right invariant and
    descends, therefore, to a function $F(t,\cdot)$ on the quotient
    $\Ml_{d-1}$. We end up with the identity
    \begin{equation} \label{eq:int-recursion} 
    \oint_{\pi(\Hl_r(t))}  f d\vol_{\pi(\Hl_r(t))} = \oint_{\Ml_{d-1}}
    F(t,\cdot) d\vol_{\Ml_{d-1}}. 
    \end{equation}
\end{remark}

\subsection{Improvement of main results for functions with compact support {\it{via}} truncation}

In this subsection, we only consider compactly supported functions $f \in C_c(\Ml_d)$.
By \eqref{eq:int-recursion}, 
the average $F_{\tilde f}(t)$ can be written as a normalized integral over $\Ml_{d-1}$ of the fiber integral  $F(t,\cdot)$ defined in \eqref{eq:Fth}. Since many of the dilated unipotent fibers over the base $\Ml_{d-1}$ stay high in the cusp of $\Ml_d$ and do not intersect the compact support of the given function $f$, in the normalized integral \eqref{eq:int-recursion} we can replace $\Ml_{d-1}$ by a truncated compact subset, obtained by cutting off the cusp along a sufficiently high horosphere. In particular, we can replace in both our main theorems the average $F_{\tilde f}(t)$ by an average over a fiber bundle with truncated compact base (with modifications in the constants but the same exponents). However, as $t \to \infty$, the truncated base needs to 
become larger and larger.
Theorem \ref{thm:main3again} below is a  quantitative version of these relevant geometric facts.

\smallskip

Recall that we denote by $r$ the geodesic ray defined in \eqref{eq:rt}
with associated Busemann function $f_r$ and associated horoballs $\Hlb_r(a) \subset \Pl_d$ described in \eqref{eq:horo}.

Following Notation \ref{notat:gammadpi}, given the projection $\pi: \Pl_d \to \Ml_d$, we denote by $\bar r$ the isometric projection of $r$ under $\pi $ and by $\Hlb_{\bar r}(a)$ the horoballs defined by $\bar r$ as in \eqref{eq:Hbr(a)}. Recall that for $a<0$ with $|a|$ large enough, $\Hlb_{\bar r}(a)$ is the projection of $\Hlb_{r}(a)$.

\begin{convention}
In what follows, $\rho$ denotes the geodesic ray in $\Pl_{d-1}$ defined by \eqref{eq:rhot}, as well as its image in $\Pl_{d}$ {\it{via}} the inclusion map $\iota: \gdm \to \gd$ defined in \eqref{eq:iota}. 
Let $\bar \rho$ be the projection of $\rho $ into $\Ml_{d-1}$.    
\end{convention}

We define, for $a<0$ with $|a|$ large enough, the truncated locally symmetric space
$$ \Ml_{d-1}(a) = \Ml_{d-1} \setminus \Hlbo_{\bar \rho }(a)=  \{ \bar{Q} \in \Ml_{d-1} \mid f_{\bar \rho }(\bar Q) \ge a \}. $$
A lift of $\Ml_{d-1}(a)$ in the fundamental domain $\Fl_{d-1} \subset \Pl_{d-1}$, as described in Notation \ref{notat:fld}, is then given by
$$ \Fl_{d-1}(a) = \{ Q \in \Fl_{d-1} \mid f_{\rho }(Q) \ge a \}. $$
In the special case $d=3$ and under the identification of $\Pl_2$ with $(\H,\sqrt{2}\dist_\H)$ (as in Example \ref{ex:hypplane1}) and of $\Fl_2$ with 
$\M = \{ z \in \H \mid -1/2 < {\rm{Re}}(z) < 1/2, |z| > 1 \}$,
$\Fl_2(a)$ is then identified with $\{ z \in \M \mid {\rm{Im}}(z) \le e^{-\frac{a}{\sqrt{2}}} \}$.

Correspondingly, we define a truncation of $\Hl_r (0)/\gop$, seen as a fibre bundle over $\Ml_{d-1}$, as follows:
\begin{equation} \label{eq:F0t'} 
\Fl_0(0)^{-a}=\left\{ \begin{pmatrix} g' & 0 \\ z^\top & 1 \end{pmatrix} \Biggm| g' \in \hc_{d-1}^{-1}(\Fl_{d-1}(a)), z \in [-1/2,1/2)^{d-1} \right\}.
\end{equation}

\begin{thm} \label{thm:main3again}
Let $d\geq 3$ and $\alpha > \frac{1}{2\sqrt{(d-2)d}}$.
\begin{enumerate}
\item\label{3again-1}\textbf{Long unipotent orbits in the cusp:} For every $a\in (-\infty , 0]$, there exists $T=T(d,a, \alpha )$ such that, for every $t\geq T$ and $h \in Hb_{\rho }(-\alpha t) \cap \Fl_{d-1}$, the image \emph{via} the projection $\pi: \Pl_d \to \Ml_d$ of the expanded unipotent orbit 
\begin{equation}\label{eq:uniporb}
\Ol_h(t) = \left\{ SO(d)\cdot \iota(h) \begin{pmatrix} e^{-\frac{\lambda t}{2}}
        \Id_{d-1} & 0 \\ e^{\frac{\mu t}{2}} x^\top & e^{\frac{\mu
            t}{2}} \end{pmatrix}  \Biggm|  x\in \left[ -\frac{1}{2}, \frac{1}{2}\right)^{d-1} \right\}
\end{equation}
is entirely contained in $Hb_{\bar r} (a) \subset \Ml_d$.  
\medskip
\item\label{3again-2}\textbf{Averages over truncated locally symmetric spaces:} Let $f \in C_c(\Ml_d)$ with compact support $K = \supp(f)$ and $\tilde f = f \circ \pi \in C(\Pl_d)$ be its periodic lift. Let $\Fl_0(t)^{-a} = \Phi_t\left( \Fl_0(0)^{-a} \right)$, where $\Fl_0(0)^{-a}$ is defined  as in \eqref{eq:F0t'}.

Let $F_{\tilde f}(t)$ be defined as in \eqref{eq:F_f} and $F_{\tilde f,\alpha}(t)$ be defined by replacing $\Fl_0(t)$ with the truncation $\Fl_0(t)^{\alpha t}$, that is
$$ F_{\tilde f,\alpha}(t) = \oint_{\Fl_0(t)^{\alpha t}} \tilde f d\vol_{\Hl_r(t)} = \oint_{\Ml_{d-1}(-\alpha t)} F(t,\cdot)d\vol_{\Ml_{d-1}}, $$
with $F$ as in \eqref{eq:int-recursion}.
There exists $T_0= T_0(d,\supp(f),\alpha)$ such that  for all $t \ge T_0$
$$
    \left\vert F_{\tilde f,\alpha}(t) -  F_{\tilde f}(t) \right\vert
    \le C(d) \Vert f \Vert_\infty e^{-\vartheta \alpha t}, 
  $$
  where $\vartheta = \frac{1}{2}\sqrt{(d-2)(d-1)}$ and $C(d)$ is a constant depending only on $d$.
\end{enumerate}
\end{thm}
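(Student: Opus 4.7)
The two parts should be attacked in opposite order of difficulty: part (2) is a short bookkeeping consequence of part (1) combined with the volume estimate of Proposition \ref{prop:siegelhorob}, while part (1) is the geometric heart of the statement and requires a Dirichlet-type simultaneous approximation.

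For part (1), I first translate the containment $\pi(\mathcal{O}_h(t)) \subset \Hlb_{\bar r}(a)$ into a primitive short-vector problem: using \eqref{eq:horogamma}, a point $Q = Q_0 \cdot g \in \Pl_d$ satisfies $\pi(Q) \in \Hlb_{\bar r}(a)$ if and only if there exists $v \in \widehat{\Z}^d$ with $Q(v) = \|gv\|^2 \le e^{a\sqrt{(d-1)/d}}$. For $g = \iota(h)\, a_{-t}\, u_x$ with $u_x = \bigl(\begin{smallmatrix} \Id & 0 \\ x^\top & 1 \end{smallmatrix}\bigr)$ and $v = (w,k)^\top$ with $w \in \Z^{d-1}$, $k \in \Z$, a direct matrix computation gives the key split
$$
\|gv\|^2 \;=\; e^{-\lambda t}\,\|hw\|^2 \;+\; e^{\mu t}\,(x^\top w + k)^2 .
$$
To balance the two terms I restrict to $w = (0,w')$ with $w' \in \Z^{d-2}$, which picks out the short directions of the form $h\cdot Q_0$: the cusp condition $f_\rho(h\cdot Q_0) \le -\alpha t$ bounds the determinant of the $(d-2)\times(d-2)$ minor $(h^\top h)^{\perp e_1}$ by $e^{-\alpha t \mu_{d-1}}$ (via \eqref{eq:busemrho} for $\Pl_{d-1}$), so Minkowski's first theorem on the $(d-2)$-dimensional sublattice $h(\{0\}\times \Z^{d-2})$ furnishes a control of the form $\|h(0,w')\|^2 \lesssim \|w'\|^2 e^{-\lambda_{d-1}\alpha t}$ in those directions. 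Then Dirichlet's simultaneous approximation theorem in dimension $d-2$ gives, for any parameter $M \ge 1$, some $w'$ with $\|w'\|_\infty \le M$ and an integer $k$ with $|x^\top w + k| \le M^{-(d-2)}$. Combining these yields
$$
\|gv\|^2 \;\le\; C(d)\bigl(M^2 e^{-(\lambda + \lambda_{d-1}\alpha)t} + M^{-2(d-2)} e^{\mu t}\bigr),
$$
and optimizing over $M$ produces $\|gv\|^2 \le C(d)\, e^{[\lambda - (d-2)\lambda_{d-1}\alpha]\,t/(d-1)}$; the numerical identities $\mu = (d-1)\lambda$ and $(d-2)\lambda_{d-1} = \mu_{d-1}$ then turn the critical threshold $\alpha \ge \lambda/\mu_{d-1}$ into a statement about $1/\sqrt{d(d-2)}$. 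The main obstacle is to squeeze the constant down to the sharper $1/(2\sqrt{d(d-2)})$ advertised in the theorem: this seems to require either a second-moment/Minkowski second-theorem argument on the full $d$-dimensional lattice $g\Z^d$ (so that one allows $w$ to also pick up a controlled $w_1$-component, whose contribution $e^{(\mu_{d-1}\alpha - \lambda)t}$ is still decaying for $\alpha$ below $1/\sqrt{d(d-2)}$), or else a careful use of the fact that $h$ lies in the Siegel fundamental domain $\Fl_{d-1}$. In any case, for $\alpha$ above the threshold the exponent is strictly negative, so for all $t \ge T(d,a,\alpha)$ one has $\|gv\|^2 \le e^{a\sqrt{(d-1)/d}}$ uniformly in $x \in [-1/2,1/2)^{d-1}$, proving part (1).

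For part (2), since $\supp(f) = K$ is compact, pick $a = a(K) \in (-\infty, 0]$ so that $K \cap \Hlb_{\bar r}(a) = \emptyset$, and let $T_0 = T(d,a,\alpha)$ be the threshold provided by part (1). For $t \ge T_0$ and every $\bar h \in \Hlbo_{\bar\rho}(-\alpha t) \cap \Fl_{d-1}$, the orbit $\mathcal{O}_h(t)$ projects into $\Hlb_{\bar r}(a)$, which is disjoint from $\supp f$, so the fiber integral $F(t,\bar h)$ defined in \eqref{eq:Fth} vanishes. Writing $V = \vol(\Ml_{d-1})$, $V_c = \vol(\Hlbo_{\bar\rho}(-\alpha t))$ and $V_t = V - V_c = \vol(\Ml_{d-1}(-\alpha t))$, the two normalized integrals only differ by the normalization factor, and
$$
\bigl|F_{\tilde f,\alpha}(t) - F_{\tilde f}(t)\bigr|
\;=\; \Bigl|\tfrac{1}{V_t} - \tfrac{1}{V}\Bigr|\,\Bigl|\!\int_{\Ml_{d-1}(-\alpha t)} F(t,\cdot) d\vol_{\Ml_{d-1}}\Bigr|
\;\le\; \|f\|_\infty\,\tfrac{V_c}{V},
$$
using the trivial bound $|F(t,\cdot)| \le \|f\|_\infty$ on $\Ml_{d-1}(-\alpha t)$ and $V_t \le V$. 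Applying the analog of Proposition \ref{prop:siegelhorob} in one dimension lower (i.e. to $\Ml_{d-1}$, with $\rho$ obtained from the ray opposite to the $(d-1)$-dimensional analogue of $r$ via a Weyl permutation) bounds $V_c/V$ by $C(d)\, e^{-\vartheta\alpha t}$ with $\vartheta = \tfrac{1}{2}\sqrt{(d-2)(d-1)}$, which completes the proof.
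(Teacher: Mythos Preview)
Your proof of part~(2) is correct and matches the paper's argument essentially verbatim.

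Part~(1), however, has a genuine gap. The step ``Minkowski's first theorem on the $(d-2)$-dimensional sublattice $h(\{0\}\times\Z^{d-2})$ furnishes a control of the form $\|h(0,w')\|^2 \lesssim \|w'\|^2 e^{-\lambda_{d-1}\alpha t}$'' is not what Minkowski gives. Minkowski's first theorem produces \emph{one} short nonzero lattice vector; the Busemann constraint $f_\rho(Q_0\cdot h)\le -\alpha t$ only bounds $\det(h^\top h)^{\perp e_1}$ by $e^{-\mu_{d-1}\alpha t}$, hence only the \emph{product} of the eigenvalues is small --- individual eigenvalues can be of order~$1$ or larger. A uniform bound $\|h(0,w')\|\lesssim\|w'\|\,e^{-\lambda_{d-1}\alpha t/2}$ valid for all $w'$ is therefore false in general, so you cannot independently pick $w'$ via Dirichlet and then expect $\|h(0,w')\|$ to be small: the $w'$ coming from Dirichlet need not lie in any short direction of the form. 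Your computed threshold $1/\sqrt{(d-2)d}$ thus rests on an unjustified premise, and the suggested fixes (second-minimum argument, Siegel-domain structure) are left unexecuted.

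The paper avoids this decoupling altogether. It first uses that the fundamental domain $\Fl_{d-1}$ lies within bounded Hausdorff distance $c_{d-1}$ of the Weyl chamber $\Wl_0$, so $h$ may be replaced by a nearby diagonal $\delta=\diag(e^{s_1},\dots,e^{s_{d-1}})$ with $s_1\ge\cdots\ge s_{d-1}$, $\sum s_i=0$, and $f_\rho(\delta)=-\sqrt{(d-1)/(d-2)}\,s_1\le -\alpha t+c_{d-1}$. Setting $w_1=0$ kills the single long direction, and then Minkowski's first theorem is applied \emph{once} to the full $(d-1)$-dimensional symmetric convex body
\[
\Bigl\{(w_2,\dots,w_{d-1},k):\ \textstyle\sum_{j\ge 2}e^{2s_j}w_j^2\le \Kl e^{\lambda t},\ |x^\top w+k|\le\sqrt{\Kl}\,e^{-\mu t/2}\Bigr\},
\]
whose volume equals $2\omega_{d-2}\Kl^{(d-1)/2}e^{s_1-\lambda t/2}$ (using $s_2+\cdots+s_{d-1}=-s_1$ and $(d-2)\lambda-\mu=-\lambda$). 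This exceeds $2^{d-1}$ once $s_1>\alpha_0 t$ with $\alpha_0>\lambda/2$, which via $f_\rho$ translates into exactly the sharp threshold $\alpha>\tfrac{1}{2\sqrt{(d-2)d}}$. The essential difference from your approach is that treating the two constraints jointly in a single Minkowski application exploits the full covolume $e^{-s_1}$ of the remaining $(d-2)$-sublattice rather than only its smallest eigenvalue, and this is precisely what recovers the missing factor of~$2$ in the threshold.
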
 

\begin{proof} \eqref{3again-1} Let $h\in \Fl_{d-1}$ be such that $f_{\rho }(h) \leq -\alpha t$. We prove that the unipotent orbit \eqref{eq:uniporb} is contained in the union of the lifts of $Hb_{\bar r} (a)$, that is, in the $\Gamma$-orbit of $Hb_r (a)$. 

Recall (see paragraph following Notation \ref{notat:alphad}) that the fundamental domain $\Fl_{d-1}$ is at finite Hausdorff distance $c_{d-1}$ from the Weyl chamber  
\begin{equation}
\Wl_{0}=\left\{ \diag (e^{s_1}, \dots, e^{s_{d-1}})\Biggm| s_1\geq s_2\geq \dots \geq s_{d-1}, \sum_{i=1}^{d-1} s_i =0\right\}.
\end{equation}

Thus, every $h\in \Fl_{d-1}$ is within distance at most $c_{d-1}$ from a diagonal matrix in $\Wl_{0}$. In particular, given $h\in \Fl_{d-1}$ with $f_{\rho }(h) \leq -\alpha t$, there exists $\delta = \diag(e^{s_1},\dots,e^{s_{d-1}}) \in \Wl_0$ satisfying $f_{\rho}(\delta) \leq -\alpha t +c_{d-1}$ and $\dist_{\Pl_{d-1}} (h,\delta )\leq c_{d-1}$. The latter inequality implies that the orbit that we obtain by replacing $h$ by $\delta$ in \eqref{eq:uniporb} is within Hausdorff distance $c_{d-1}$ from the orbit of $h$. It suffices therefore to prove that this second orbit is entirely contained in the $\Gamma$-orbit of $Hb_r (a')$ with $a'= a-c_{d-1}$. 
  
According to \eqref{eq:horogamma}, this is equivalent to proving that for every $x\in \left[ -\frac{1}{2}, \frac{1}{2}\right)^{d-1}$ there exists a vector $(w,k)^\top \in \widehat \Z^d$ with $w\in \Z^{d-1} $ and $k\in \Z$ such that the quadratic form 
$$ 
Q_0 \cdot \iota(\delta ) \begin{pmatrix} e^{-\frac{\lambda t}{2}}
        \Id_{d-1} & 0 \\ e^{\frac{\mu t}{2}} x^\top & e^{\frac{\mu
            t}{2}} \end{pmatrix} 
$$
evaluated in $(w,k)$ is at most $e^{\mu a'}$. In what follows, we denote $\frac{e^{\mu a'}}{2}$ by $\Kl$. A sufficient con\-dition for the above to hold is that
$$
\left\{ \begin{array}{ccc}
e^{2s_1} w_1^2 +\cdots + e^{2s_{d-1}} w_{d-1}^2 & \leq & \Kl e^{\lambda t},\\
\left| \langle x, w\rangle +k \right|& \leq & \sqrt{\Kl }e^{-\frac{\mu t}{2}}. \end{array}\right.
$$
If we assume $w_1=0$ the inequalities become 
$$
\left\{ \begin{array}{ccc}
e^{2s_2} w_2^2 +\cdots + e^{2s_{d-1}} w_{d-1}^2 & \leq & \Kl e^{\lambda t},\\
\left| \langle x, w\rangle +k \right|& \leq & \sqrt{\Kl }e^{-\frac{\mu t}{2}}. \end{array}\right.
$$
  To prove that the system of inequalities in $w_2,\dots , w_{d-1}, k$ has an integral solution different from $0$ it suffices to prove that the convex body they define has volume at least $2^{d-1}$, by Minkowski's First Theorem \cite[Theorem 10]{Sie}. Using $s_1= -s_2-\cdots -s_{d-1}$, the volume is $2e^{s_1}\omega_{d-2}\left(\sqrt{\Kl }e^{\frac{\lambda t}{2}} \right)^{d-2}\sqrt{\Kl }e^{-\frac{\mu t}{2}}= 2\omega_{d-2} \Kl^{\frac{d-1}{2}} e^{s_1-\frac{\lambda t}{2}}$, where $\omega_{d-2}$ is the volume of the $(d-2)$-dimensional unit ball. 

Then, for any choice of $\alpha_0 > \frac{\lambda}{2}$, the volume is at least $2^{d-1}$ for large enough $t$, provided $s_1 \ge \alpha_0 t$.
  If $s_1\geq \alpha_0 t$ with
$\alpha_0 >\frac{\lambda}{2}$,
then the volume is at least $2^{d-1}$ for $t$ large enough.

It follows from \eqref{eq:busemrho} that the condition $s_1 \ge \alpha_0 t$ is satisfied if
$$ f_\rho(\delta) = \sqrt{\frac{d-1}{d-2}} \log \det \delta^{\perp e_1} = - \sqrt{\frac{d-1}{d-2}} s_1 \le - \alpha_1 t, $$
with $\alpha_1 = \sqrt{\frac{d-1}{d-2}} \alpha_0$.

For given $\alpha > \frac{1}{2 \sqrt{(d-2)d}}$, we can now choose $\alpha_1 \in \left( \frac{1}{2\sqrt{(d-2)d}},\alpha \right)$, which implies $\alpha_0 = \sqrt{\frac{d-2}{d-1}}\alpha_1 > \frac{\lambda}{2}$. Then, for $t$ large enough, that is, $t \ge T(d,a,\alpha)$ with $T(d, a,\alpha)$ suitably chosen, we have for all $h \in \Fl_{d-1}$ with $f_\rho(h) \le - \alpha t$ that $f_\rho(\delta) \le - \alpha t + c_{d-1} \le - \alpha_1 t$, which implies that $\pi(\Ol_h(t)) \subset Hb_{\bar r} (a)$.
  
\medskip

\eqref{3again-2} For any $a \in (-\infty,0]$ such that $Hb_{\bar r} (a)$ is disjoint from $\supp(f)$, we conclude from \eqref{3again-1} that, for all $t \ge T(d,a,\alpha)$,
$$F_{\tilde f}(t) =\frac{1}{\vol\, \Fl_0(t)} \int_{\Fl_0(t)^{\alpha t}} \tilde f (Q) d\vol_{\Hl_r(t)} (Q). 
$$
Therefore, we have $F_{\tilde f,\alpha}(t) -  F_{\tilde f}(t) = \left( \frac{\vol\, \Fl_0(t)}{\vol\, \Fl_0(t)'} -1 \right)\,  F_{\tilde f}(t) .$
Note that $|F_{\tilde f}(t)| \le \Vert f \Vert_\infty$.
Formula \eqref{eq:volhormod} in Lemma \ref{lem:horovolrel} implies that
$$
\frac{\vol\, \Fl_0(t)}{\vol\, \Fl_0(t)^{\alpha t}} -1= \frac{\vol\, \Fl_0(0)}{\vol\, \Fl_0(0)^{\alpha t}} -1.
$$

We have that $\frac{\vol\, \Fl_0(0) - \vol\, \Fl_0(0)^{\alpha t}}{\vol\, \Fl_0(0)^{\alpha t}}= \frac{\vol\left( \Hlb_{\bar \rho } (-\alpha t )\right)}{\vol\, \Ml_{d-1} - \vol\left( \Hlb_{\bar \rho } (-\alpha t )\right) }\leq 2 \frac{\vol\left( \Hlb_{\bar \rho } (-\alpha t )\right)}{\vol\, \Ml_{d-1} }$ for $t$ large enough, and the latter fraction is, according to Proposition \ref{prop:siegelhorob}, at most $C(d)e^{-\vartheta \alpha t}$ for $t$ large enough, where $C(d)$ is a constant depending only on $d$ and $\vartheta = \frac{1}{2}\sqrt{(d-1)(d-2)}$. 
\end{proof}

\begin{cor}\label{cor:main3again}
For every $\alpha \ge \frac{1}{2} \sqrt{\frac{d}{d-2}}$, in both Theorems \ref{thm:main1again} and \ref{thm:main2again} the average $F_{\tilde f}(t)$ can be replaced by the truncated average $F_{\tilde f,\alpha}(t)$ by modifying the constants accordingly (while keeping the exponent in the convergence rate).  
\end{cor}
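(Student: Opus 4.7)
The plan is to deduce the corollary by a triangle inequality, splitting
\[
F_{\tilde f,\alpha}(t) - \oint_{\Ml_d} f\, d\vol_{\Ml_d}
= \bigl( F_{\tilde f,\alpha}(t) - F_{\tilde f}(t) \bigr)
+ \bigl( F_{\tilde f}(t) - \oint_{\Ml_d} f\, d\vol_{\Ml_d} \bigr),
\]
then controlling the first summand by Theorem \ref{thm:main3again}(2) (the ``truncation error'') and the second by Theorems \ref{thm:main1again} and \ref{thm:main2again} (the ``equidistribution error''). Since we are in the compactly supported setting of the subsection, both ingredients are applicable, provided $t$ is large enough (the small-$t$ regime can always be absorbed by enlarging the multiplicative constants).

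The whole statement reduces to checking that the truncation exponent $\vartheta \alpha$ dominates the exponent appearing in each of the two main theorems. Theorem \ref{thm:main3again}(2) gives a truncation error $O\bigl( e^{-\vartheta \alpha t} \bigr)$ with $\vartheta = \tfrac{1}{2}\sqrt{(d-2)(d-1)}$. A direct computation shows
\[
\vartheta \alpha \;\ge\; \tfrac{1}{2}\sqrt{(d-2)(d-1)} \cdot \tfrac{1}{2}\sqrt{\tfrac{d}{d-2}} \;=\; \tfrac{1}{4}\sqrt{(d-1)d}
\]
for every $\alpha \ge \tfrac{1}{2}\sqrt{\tfrac{d}{d-2}}$. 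Hence the truncation error decays at least as fast as $e^{-\frac{1}{4}\sqrt{(d-1)d}\, t}$, which matches exactly the leading exponent of Theorem \ref{thm:main1again} and is strictly faster than the exponent $\tfrac{1}{8}\sqrt{(d-1)d}$ of Theorem \ref{thm:main2again}. In both cases one can therefore absorb the truncation error into the equidistribution error without changing the exponent.

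Concretely, for Theorem \ref{thm:main2again} the truncation bound $C(d)\Vert f\Vert_\infty e^{-\vartheta \alpha t}$ is dominated by $C(d)\Vert f\Vert_\infty e^{-\frac{1}{8}\sqrt{(d-1)d}\, t}$ for all $t \ge 0$, so it is absorbed by enlarging the coefficient of $\Vert \grad f\Vert_\infty$ (or introducing $\Vert f\Vert_\infty$ as an additional prefactor). For Theorem \ref{thm:main1again}, one notes that $e^{-\vartheta \alpha t} \le e^{-\frac{1}{4}\sqrt{(d-1)d}\, t + \varepsilon t}$ for every $\varepsilon > 0$ and every $t \ge 0$, so the same absorption works after rechoosing $\kappa$ and $T_0$ in the natural way.

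The only mild obstacle is bookkeeping: one must verify that the threshold $T_0(d,\supp(f),\alpha)$ appearing in Theorem \ref{thm:main3again}(2) can be taken compatible with the thresholds $T_0$ in the two main theorems, so that the combined inequality holds uniformly for $t$ beyond a single explicit level. This is routine since all three thresholds depend monotonically on the data, so one simply takes their maximum. No new analytic input is required.
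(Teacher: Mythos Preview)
Your proof is correct and is precisely the argument the paper has in mind: the corollary is stated without proof, as an immediate consequence of Theorem~\ref{thm:main3again}(2) combined with the two main theorems via the triangle inequality, and your exponent comparison $\vartheta\alpha \ge \tfrac{1}{4}\sqrt{(d-1)d}$ is exactly the reason for the threshold $\alpha \ge \tfrac{1}{2}\sqrt{d/(d-2)}$. Nothing further is needed.
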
  
  
\appendix

\section{Relevant results about functions on the real line}
\label{app:realline}

The following result is useful for the proof of Theorem \ref{thm:main1}.

\begin{prop} \label{prop:intexpconseq}
Let $0 < \beta < \alpha$ and $\tilde C > 0$. Let
  $g: \R \to \R$ be a continuous function satisfying
 \begin{equation} \label{eq:intest} 
  \left\vert \int_{-\infty}^T e^{\alpha t} g(t) dt \right\vert \le \tilde C e^{\beta T} \quad \text{for all $T \in \R$.} 
  \end{equation}
Let $\varepsilon > 0$ and $\kappa > 0$, $\varphi = \varphi_{\varepsilon,\kappa,\tilde C}$ be the function
\begin{equation} \label{eq:varphi}
\varphi(t) = \frac{4\tilde C}{ \kappa} e^{-\varepsilon t},
\end{equation} 
and $\Tl = \Tl_{\varepsilon,\kappa} \subset \R$ be the set of parameters $t\in \R$ satisfying
  \begin{equation}\label{eq:ineqgt} 
    \vert g(t) \vert \le \kappa e^{-(\alpha-\beta) t + \varepsilon t}.
  \end{equation}  
  
Then the set $\Tl$ has the property that it intersects any interval $[T,S]$ with $S \ge T+\varphi (T)$ whenever
  \begin{equation}\label{eq:T_0}
    T \ge T_0 = \frac{1}{\varepsilon} \log\left( \frac{2 \tilde C (\beta+\varepsilon)}{\kappa} \right).
 \end{equation}
\end{prop}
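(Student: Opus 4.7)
The plan is to proceed by contradiction: fix $T \ge T_0$, set $h := \varphi(T) = \frac{4 \tilde C}{\kappa} e^{-\varepsilon T}$, and suppose $[T, T+h] \cap \Tl = \emptyset$. Then $|g(t)| > \kappa e^{-(\alpha-\beta)t + \varepsilon t} > 0$ on $[T, T+h]$, so by continuity $g$ has constant sign there; replacing $g$ by $-g$ if needed (the hypothesis \eqref{eq:intest} is preserved), I may assume $g > 0$ throughout. Since $[T, T+\varphi(T)] \subset [T,S]$ whenever $S \ge T+\varphi(T)$, producing a contradiction for $[T, T+h]$ suffices.

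I would introduce the primitive $F(s) := \int_{-\infty}^s e^{\alpha t} g(t)\, dt$, so that the hypothesis reads $|F(s)| \le \tilde C e^{\beta s}$, and then bound the positive quantity $F(T+h) - F(T) = \int_T^{T+h} e^{\alpha t} g(t)\, dt$ from two sides. The pointwise lower bound on $g$ yields
\[
F(T+h) - F(T) > \kappa \int_T^{T+h} e^{(\beta+\varepsilon) t}\, dt = \frac{\kappa\, e^{(\beta+\varepsilon)T}}{\beta+\varepsilon}\left( e^{(\beta+\varepsilon) h} - 1 \right),
\]
while the triangle inequality applied to the hypothesis gives
\[
F(T+h) - F(T) \le |F(T+h)| + |F(T)| \le \tilde C e^{\beta T}\left( e^{\beta h} + 1 \right).
\]
Combining, dividing by $\tilde C e^{\beta T}$, and using the defining identity $\kappa h e^{\varepsilon T} = 4 \tilde C$, I reduce the contradiction to the single inequality
\[
\frac{4(e^u - 1)}{u} < e^{\beta h} + 1 \le e^u + 1, \qquad u := (\beta+\varepsilon) h,
\]
where the second step uses $\beta h \le u$.

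The definition of $T_0$ is calibrated so that $(\beta+\varepsilon) \varphi(T_0) = 2$; since $\varphi$ is decreasing, this gives $u \le 2$ for all $T \ge T_0$. It therefore suffices to establish the elementary calculus inequality $4(e^u - 1) \ge u(e^u + 1)$ on $(0, 2]$, which I would do by setting $\phi(u) := (4-u) e^u - (4+u)$ and computing $\phi(0) = 0$, $\phi'(u) = (3-u)e^u - 1$ with $\phi'(0) = 2$, and $\phi''(u) = (2-u) e^u \ge 0$ on $[0, 2]$. Hence $\phi'$ is increasing on $[0,2]$ with minimum value $2 > 0$, so $\phi$ is strictly increasing on $[0, 2]$ and strictly positive on $(0, 2]$, contradicting the strict inequality above.

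The main obstacle is really just the bookkeeping of constants: the factor $4$ appearing in $\varphi$ and the precise form of $T_0$ are chosen so that the resulting algebraic inequality falls within the clean range $u \in (0, 2]$ where $\phi$ can be analysed by two differentiations. No further ingredient beyond the hypothesis \eqref{eq:intest}, the continuity of $g$, and this elementary calculus check is required.
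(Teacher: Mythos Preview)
Your proof is correct and follows essentially the same route as the paper's: both argue by contradiction on a ``bad'' interval, use continuity to fix the sign of $g$, bound $\int_T^{T'} e^{\alpha t} g(t)\,dt$ below by the pointwise estimate and above via the triangle inequality applied to the hypothesis~\eqref{eq:intest}, and then extract a contradiction from the resulting algebraic inequality. The only cosmetic difference is in that last step: the paper leaves $T'$ free, rearranges to isolate $e^{(\beta+\varepsilon)(T'-T)}$, and applies $\log(1+x)\le x$ together with $\kappa'-e^{-\varepsilon T}\ge \kappa'/2$ to deduce $T'-T<\varphi(T)$, whereas you fix $T'=T+\varphi(T)$ from the outset and reduce to the calculus inequality $4(e^u-1)\ge u(e^u+1)$ on $(0,2]$.
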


\begin{proof}
Let $\varepsilon > 0$ and $\kappa > 0$ be chosen.
Consider a nontrivial interval $[T, T'] \subset \R$ such that for every $t\in [T, T']$, the opposite of inequality \eqref{eq:ineqgt} holds. We have that
$$ \vert g(t) \vert > \kappa e^{-(\alpha-\beta)t+\varepsilon t} \quad
\text{for all $t \in [T,T']$.} $$
In particular, $g$ does not change sign on $[T,T']$, and
we have
\begin{equation*}
\left\vert \int_{T}^{T'} e^{\alpha t} g(t) dt \right\vert =
\int_{T}^{T'} e^{\alpha t} |g(t)| dt > \kappa \int_{T}^{T'} 
e^{(\beta+\varepsilon)t} dt = \frac{\kappa}{\beta+\varepsilon} \left(
e^{(\beta+\varepsilon)T'} - e^{(\beta+\varepsilon)T} \right).
\end{equation*}
Using \eqref{eq:intest}, this implies that
\begin{equation*}
\tilde C e^{\beta T'} \ge \left\vert \int_{T}^{T'} e^{\alpha t} g(t) dt \right\vert - \left\vert \int_{-\infty}^{T} e^{\alpha t} g(t) dt \right\vert
> \frac{\kappa}{\beta+\varepsilon} \left( e^{(\beta+\varepsilon)T'} - e^{(\beta+\varepsilon)T}\right) - \tilde C e^{\beta T},
\end{equation*}
that is
$$
\frac{1}{\kappa'} \left( e^{\beta T'} + e^{\beta T}\right) > e^{(\beta+\varepsilon)T'} - e^{(\beta+\varepsilon)T},
$$
with
\begin{equation} \label{eq:kappa'} 
\kappa' = \frac{\kappa}{\tilde C(\beta +\varepsilon)} > 0. 
\end{equation}
We define $\Delta_T = T' - T > 0$. Using this notation, we obtain
\begin{equation} \label{eq:DeltaTest0}
\frac{1}{\kappa'} e^{-\varepsilon T} \left( e^{(\beta + \varepsilon) \Delta_T} + 1\right)
> \frac{1}{\kappa'} e^{-\varepsilon T} \left( e^{\beta \Delta_T} + 1\right) > e^{(\beta + \varepsilon)\Delta_T} -1.
\end{equation}
For $T \ge T_0 = \frac{1}{\varepsilon}\log\left( \frac{2\tilde C(\beta+\varepsilon)}{\kappa}\right) = \frac{1}{\varepsilon}\log\left(\frac{2}{\kappa'}\right)$ we have
\begin{equation}\label{eq:Tcond}
\kappa' - e^{-\varepsilon T} \ge \frac{\kappa'}{2}.
\end{equation}
Therefore, \eqref{eq:DeltaTest0} implies that
$$ e^{(\beta+\varepsilon)\Delta_T} < 1 + \frac{2 e^{-\varepsilon T}}{\kappa' - e^{- \varepsilon T}}. $$
Using \eqref{eq:Tcond} we conclude that
$$ (\beta + \varepsilon) \Delta_T < 
\log\left( 1 + \frac{2e^{-\varepsilon T}}{\kappa'-e^{-\varepsilon T}} \right) 
\le \frac{2 e^{-\varepsilon T}}{\kappa' - e^{-\varepsilon T}}
\le \frac{4}{\kappa'} e^{- \varepsilon T}. 
$$

Substituting $\kappa'$ from \eqref{eq:kappa'} back, we obtain
\begin{equation}\label{eq:Delta}
\Delta_T < \frac{4 \tilde C}{\kappa} e^{-\varepsilon T} = \varphi_{\varepsilon,\kappa,\tilde C} (T). \end{equation}
\end{proof}

Proposition \ref{prop:intexpconseq} has the following
consequence. This consequence is useful for the proof of Theorem \ref{thm:main2}.

\begin{cor} \label{cor:intexpconseq}
Let $0 < \beta < \alpha$ and $\tilde C > 0$. Let $g: \R \to \R$ be a Lipschitz continuous function with Lipschitz constant
  $L \ge 0$, satisfying
 \begin{equation} \label{eq:intest2} 
  \left\vert \int_{-\infty}^T e^{\alpha t} g(t) dt \right\vert \le \tilde C\, e^{\beta T} \quad \text{for all $T \in \R$.} 
  \end{equation}
  Then we have
  $$ |g(t)| \le (\tilde C + 4 L) e^{-\frac{\alpha-\beta}{2} t} \qquad \text{for all}\,\, t \ge \frac{2}{\alpha-\beta} \log(\alpha+\beta). $$

\end{cor}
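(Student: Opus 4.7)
The plan is to deduce this corollary directly from Proposition \ref{prop:intexpconseq} by calibrating the parameters $\varepsilon$ and $\kappa$ and then using Lipschitz continuity to upgrade the ``exists $t$ in every short interval'' conclusion of the proposition into a pointwise bound valid for every sufficiently large $t$.

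Concretely, I would apply Proposition \ref{prop:intexpconseq} with the choices $\varepsilon = \frac{\alpha-\beta}{2}$ and $\kappa = \tilde C$. With these choices the exponent $-(\alpha-\beta)+\varepsilon$ appearing in \eqref{eq:ineqgt} becomes exactly $-\frac{\alpha-\beta}{2}$, and the threshold $T_0$ in \eqref{eq:T_0} simplifies to
\begin{equation*}
T_0 \;=\; \frac{1}{\varepsilon}\log\!\left(\frac{2\tilde C(\beta+\varepsilon)}{\kappa}\right) \;=\; \frac{2}{\alpha-\beta}\log(\alpha+\beta),
\end{equation*}
which is precisely the threshold appearing in the statement. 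The function $\varphi$ in \eqref{eq:varphi} becomes $\varphi(t) = \frac{4\tilde C}{\tilde C} e^{-\varepsilon t} = 4\, e^{-\frac{\alpha-\beta}{2} t}$.

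For any $t \ge T_0$, the proposition guarantees the existence of some $t^\ast \in [t,\, t+\varphi(t)]$ satisfying $|g(t^\ast)| \le \tilde C\, e^{-\frac{\alpha-\beta}{2} t^\ast}$. I would then invoke Lipschitz continuity to write
\begin{equation*}
|g(t)| \;\le\; |g(t^\ast)| + L\,|t^\ast - t| \;\le\; \tilde C\, e^{-\frac{\alpha-\beta}{2} t^\ast} + L\,\varphi(t).
\end{equation*}
Since $t^\ast \ge t$ and the exponent is negative, $e^{-\frac{\alpha-\beta}{2} t^\ast} \le e^{-\frac{\alpha-\beta}{2} t}$, and by construction $\varphi(t) = 4 e^{-\frac{\alpha-\beta}{2} t}$. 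Combining these yields
\begin{equation*}
|g(t)| \;\le\; \bigl(\tilde C + 4L\bigr)\, e^{-\frac{\alpha-\beta}{2} t},
\end{equation*}
which is the desired inequality.

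I do not anticipate a genuine obstacle: the calibration $\kappa = \tilde C$ is forced by the shape of the final constant $\tilde C + 4L$, and the choice $\varepsilon = \frac{\alpha-\beta}{2}$ is forced by the exponent $\frac{\alpha-\beta}{2}$ in the conclusion. The only mild care point is to verify that the stated threshold $\frac{2}{\alpha-\beta}\log(\alpha+\beta)$ matches the value of $T_0$ after substitution, which is a direct computation.
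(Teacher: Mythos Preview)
Your proof is correct and follows essentially the same approach as the paper's own proof: apply Proposition~\ref{prop:intexpconseq} with $\varepsilon = \frac{\alpha-\beta}{2}$ and $\kappa = \tilde C$, then use Lipschitz continuity to pass from the existence of a good point $t^\ast$ in each short interval to a pointwise bound. The only cosmetic difference is that the paper presents the parameter choices at the end rather than the beginning.
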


\begin{proof}
  We apply Proposition \ref{prop:intexpconseq}.
  Choosing $T_0$ as in \eqref{eq:T_0}, we have that for
  $T \in [T_0,\infty)$ there exists $t \in [T,T+\varphi(T)]$ with
  \begin{equation*}
    |g(T)| \le |g(t)| + (t-T) L \le |g(t)| + L\, \varphi(T)
    \le \kappa e^{-(\alpha-\beta)t + \varepsilon t} + L\, \frac{4 \tilde C}{\kappa}
    e^{-\varepsilon T}.
  \end{equation*}
  The choice $\varepsilon = \frac{\alpha-\beta}{2}$ allows to replace the first term in the sum above by the larger term $\kappa e^{-\frac{\alpha-\beta)}{2}T}$ which ensures that the two exponentials in $T$ decrease with the same speed. Moreover, we choose $\kappa = \tilde C$
  and obtain
  $$ |g(T)| \le \left( \tilde C + 4L \right)
  e^{-\frac{\alpha-\beta}{2}T} $$
  for all
  \begin{equation*}
  T \ge T_0 = \frac{1}{\varepsilon} \log\left(
      \frac{2\tilde C (\beta + \varepsilon)}{\kappa}\right)
  = \frac{2}{\alpha-\beta} \log\left( \alpha+\beta
    \right).
  \end{equation*}
\end{proof}

\section{Primitive lattice point counting and the Riemann zeta function}
\label{app:primlattriem}

In this appendix we provide a proof of the relationship between the asymptotics of the error term of the lattice point counting and that of the error term of primitive lattice point counting, as stated in Proposition \ref{prop:latttoprimlatt}.

\begin{proof}[Proof of Proposition \ref{prop:latttoprimlatt}] 
For $x \ge 0$, let 
$$ r_0(x) =\# \Z^d \cap \partial(\sqrt{x} \Bl^d) \quad \text{and} \quad
r_1(x) = \# \widehat \Z^d \cap \partial(\sqrt{x} \Bl^d). $$
It is easy to see that
\begin{equation} \label{eq:r0} 
r_0(x) = \sum_{k^2 \le x} r_1(x/k^2) = \sum_{n \le x} \tau (n) r_1(x/n), 
\end{equation}
where $\tau : \N \to \R$ is the characteristic function of all square numbers. It is easy to see
that the Dirichlet inverse\footnote{This means that we have $\tau * \nu(n) = \sum_{k | n} \tau (k) \nu(n/k) = \delta_1(n)$.} of $\tau $ is $\nu: \N \to \R$ defined by
$$ \nu(n) = \begin{cases} \mu(k), & \text{if $n$ equals a square $k^2$,} \\ 0, & \text{otherwise,} \end{cases} $$
where $\mu: \N \to \{-1,0,1\}$ is the M\"obius function. Applying M\"obius' generalized inversion formula \cite[p. 40]{Ap}, we conclude that
\begin{equation} \label{eq:r1}
r_1(x) = \sum_{k^2 \le x} \mu(k) r_0(x/k^2). 
\end{equation}
The identities \eqref{eq:r0} and \eqref{eq:r1} allow us to relate the error terms. We have
\begin{multline*}
E_0(\Bl^d,R) + \vol(\Bl^d) R^d = N_0(R) = \sum_{x \le R^2} \sum_{k^2 \le x} r_1(x/k^2) 
= \sum_{k \le R} \sum_{x \le R^2/k^2} r_1(x) = \sum_{k \le R} N_1(R/k) \\
= \sum_{k \le R} \left( E_1(\Bl^d,R/k) + \frac{\vol(\Bl^d)}{\zeta(d)} (R/k)^d \right)
= \left( \sum_{k \le R} E_1(\Bl^d,R/k) \right) +  \frac{\vol(\Bl^d)}{\zeta(d)} R^d 
\sum_{k \le R} \frac{1}{k^d},
\end{multline*}
i.e.,
\begin{equation} \label{eq:E0rep} 
E_0(\Bl^d,R) = \left( \sum_{k \le R} E_1(\Bl^d,R/k) \right) - \frac{\vol(\Bl^d)}{\zeta(d)} 
R^d \sum_{k > R} \frac{1}{k^d}. 
\end{equation}
An analogous calculation leads to
\begin{equation} \label{eq:E1rep}
E_1(\Bl^d,R) = \left( \sum_{k \le R} \mu(k) E_0(\Bl^d,R/k) \right) - \vol(\Bl^d) 
R^d \sum_{k > R} \frac{\mu(k)}{k^d}. 
\end{equation}
Now we estimate the two terms on the right hand side of \eqref{eq:E0rep} and \eqref{eq:E1rep} separately. Assuming \eqref{eq:Ejest}, we obtain for the first term for $R \ge R_j$ and
any $\varepsilon: \N \to [-1,1]$:
\begin{equation*}
\left\vert \sum_{k \le R} \varepsilon(k) E_j(\Bl^d,R/k)  \right\vert|\le \sum_{k \le R}
C_j \left( \frac{R}{k} \right)^\alpha \left( \log \frac{R}{k} \right)^\beta
\le C_j R^\alpha (\log R)^\beta \zeta(\alpha).
\end{equation*}
For the second term, we observe that for any $\varepsilon: \N \to [-1,1]$ and $R > 0$ \footnote{In fact, in the case $\varepsilon=\mu$, we even have $\vol(\Bl^d)R^d\sum_{k > R}\mu(k)/k^d =o(R)$, using partial summation and the fact $\sum_{j \le k} \mu(j) = o(k)$.}
$$ \left\vert \vol(\Bl^d)  R^d \sum_{k > R} \frac{\varepsilon(k)}{k^d} \right\vert
 \le \vol(\Bl^d) R^d \int_R^\infty \frac{dx}{x^d} = \frac{\vol(\Bl^d)}{d-1} R. $$
Using $R_0 \ge 2$, both estimates impy that \eqref{eq:E1-jest} holds with
$$ C_{1-j} = C_j \zeta(\alpha) + \frac{\vol(\Bl^d)}{(d-1)(\log 2)^\beta}. $$

For the proof of \eqref{eq:Ejo-est}, we assume that $E_j(\Bl^d,R) = o(R^\alpha)$ and
write $|E_j(\Bl^d,R)| \le R^\alpha \theta(R)$ with $\theta(R) \to 0$ as $R \to \infty$.
Without loss of generality, we can assume that $\theta$ is positive and monotone decreasing. Since $E_{1-j}(\Bl^d,R) = \left[ \sum_{k \le R} \varepsilon(k) E_j(\Bl^d,R/k) \right] + O(R)$ with $\varepsilon = \mu$ if $j=0$ and $\varepsilon \equiv 1$ if $j=1$, it suffices to show
that
$$ \sum_{k \le R} \varepsilon(k) E_j(\Bl^d,R/k)  = o(R^\alpha). $$
We split the sum into two parts and estimate them separately, i.e., 
$$ \left| \sum_{k \le R} \varepsilon(k) E_j(\Bl^d,R/k) \right| \leq \sum_{k \le \sqrt{R}} \left| E_j(\Bl^d,R/k) \right| + \sum_{\sqrt{R} < k \le R} \left| E_j(\Bl^d,R/k) \right|.  $$
For the first term, using the monotonicity of $\theta$, we obtain
$$ \sum_{k \le \sqrt{R}} \left| E_j(\Bl^d,R/k) \right| \le \sum_{k \le \sqrt{R}} 
\left( \frac{R}{k} \right)^\alpha \theta(R/k) \le \zeta(\alpha) R^\alpha \theta(\sqrt{R})
= o(R^\alpha). $$
For the second term, we use the boundedness of $\theta$ and obtain
$$ \sum_{\sqrt{R} < k \le R} \left| E_j(\Bl^d,R/k) \right| \le \theta(1) R^\alpha \sum_{k > \sqrt{R}} \frac{1}{k^\alpha} = o(R^\alpha). $$
This shows that $E_{1-j}(\Bl^d,R) = o(R^\alpha)$, finishing the proof. 
\end{proof}

\end{document}